\newtheorem{thm}{Theorem}
\newtheorem{lem}{Lemma}
\title{Existence of Some Signed Magic Arrays\thanks{Research
supported by NSF REU Grant DMS1262838, University of West Georgia}
}
\author{
Abdollah Khodkar\\
Department of Mathematics\\
University of West Georgia\\
Carrollton, GA 30118\\
{\tt akhodkar@westga.edu}\vspace{3mm}\\
Christian Schulz\\
Department of Mathematics\\
Rose Hulman Institute of Technology\\
5500 Wabash Ave, Terre Haute, IN 47803\\
{\tt schulzcc@rose-hulman.edu}\vspace{3mm}\\
Nathan Wagner\\
Department of Mathematics\\
Bucknell University\\
701 Moore Avenue,Lewisburg, PA 17837\\
{\tt naw006@bucknell.edu}
}
\begin{document}

\maketitle

\begin{abstract}
	We consider the notion of a signed magic array, which is an $m \times n$ rectangular array with the same number of filled cells $s$ in each row and the same number of filled cells $t$ in each column, filled with a certain set of numbers that is symmetric about the number zero, such that every row and column has a zero sum. We attempt to make progress toward a characterization of for which $(m, n, s, t)$ there exists such an array. This characterization is complete in the case where $n = s$ and in the case where $n = m$; we also characterize three-fourths of the cases where $n = 2m$.

\noindent {Keywords: magic array, Heftter array, signed magic array}
\end{abstract}

\vspace{10pt}

\section{Introduction}\label{introduction}

A {\em magic rectangle} is defined as an $m \times n$ array whose entries are precisely the integers from $0$ to $mn-1$ wherein the sum of each row is $c$ and the sum of each column is $r$.
A {\em magic square} is a magic rectangle with $m=n$ and $c=r$.
In \cite{sun} it is proved that:

\begin{thm}\label{Th:Sun}
There is an $m \times n$ magic rectangle if and only if $m \equiv n \pmod 2$, $m + n > 5$, and $m, n > 1$.
\end{thm}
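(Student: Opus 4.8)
\emph{Necessity.} First I would count the entries two ways. Their total is $0+1+\cdots+(mn-1)=\tfrac12 mn(mn-1)$, and this equals $mc$ (adding the $m$ row sums) and $nr$ (adding the $n$ column sums), so $c=\tfrac12 n(mn-1)$ and $r=\tfrac12 m(mn-1)$. When $mn$ is even, $mn-1$ is odd, so integrality of $c$ and of $r$ forces both $n$ and $m$ even; when $mn$ is odd, both are odd. Either way $m\equiv n\pmod 2$. If $m=1$, each column sum equals a single entry, forcing all entries equal, impossible once $n\ge 2$; transposing handles $n=1$, and the board $1\times1$ is excluded by $m+n>5$. Finally a $2\times2$ board is impossible: the only partition of $\{0,1,2,3\}$ into two rows of sum $3$ is $\{0,3\},\{1,2\}$, and no arrangement of those rows makes both columns sum to $3$. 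Since $(2,2)$ is the only pair with $m\equiv n\pmod 2$, $m,n\ge 2$, and $m+n\le 5$, all three stated conditions are necessary.

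\emph{Sufficiency.} For the converse I would exhibit explicit arrays, after two reductions. First, transposing a magic rectangle gives one, so I may assume $m\le n$. Second, a magic rectangle is equivalent to a completely filled array whose entry set is symmetric about $0$ and all of whose lines sum to $0$: replace each entry $a$ by $a-\tfrac12(mn-1)$ when $mn$ is odd (entries become $\{0,\pm1,\dots,\pm\tfrac{mn-1}2\}$), or by $2a-(mn-1)$ when $mn$ is even (entries become the odd integers $\pm1,\pm3,\dots,\pm(mn-1)$). I would also record a composition lemma: given an $a\times b$ and a $c\times d$ magic rectangle, an $ac\times bd$ one exists, obtained by tiling with $cd$ translated copies of the $a\times b$ array arranged in the pattern of the $c\times d$ array (the copy in cell $(k,\ell)$ shifted by $ab$ times the $(k,\ell)$ entry of the $c\times d$ array); a short computation shows every row and column sum of the result is constant. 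The construction then splits on parity. For $m,n$ both odd (so automatically $m,n\ge 3$ and $m+n\ge 6$) I would give a direct placement on the centered entry set, a cyclic/staircase pattern refined so each line picks up balanced $\pm$ contributions, using the composition lemma to cut down to a short list of base cases. For $m,n$ both even with $(m,n)\ne(2,2)$ I would build the board from $2\times 2$ and $2\times 4$ sub-blocks whose four (resp.\ eight) entries occur in $\pm$ pairs arranged so that each block contributes $0$ to every row and column it meets, handling the residues of $m$ and $n$ modulo $4$ as separate cases and gluing a small hand-built board onto the leftover.

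\emph{Main obstacle.} The real work is the even--even construction, and within it the case $m\equiv n\equiv 2\pmod 4$: there, sums of the type $\tfrac12\binom{k}{2}$ have the ``wrong'' parity, so simply placing a value and its negative in a common row does not also balance the columns, and one must interleave large- and small-magnitude entries across columns in a coordinated way, or split off a small exceptional board and handle it directly. A secondary difficulty is that there seems to be no clean single recursion: naively appending two rows or two columns changes the required line sums for the \emph{old} cells as well, so I expect the argument to rely on a fixed finite library of base rectangles together with the composition lemma rather than on one uniform inductive step.
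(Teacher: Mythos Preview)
The paper does not prove Theorem~\ref{Th:Sun}; it quotes the result from \cite{sun} and later invokes it as a black box (in Lemma~\ref{oddxodd}). There is therefore no in-paper argument against which to compare your plan.

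On the plan itself: the necessity half is correct and essentially complete (with the usual convention that the degenerate $1\times1$ board is excluded). For sufficiency, however, your composition lemma, while valid, cannot ``cut down to a short list of base cases'' in the odd--odd situation: if $m$ and $n$ are distinct odd primes there is no nontrivial factorisation $m=ac$, $n=bd$, so composition produces nothing and you still need a direct construction for \emph{every} odd pair $m,n\ge3$. Your ``cyclic/staircase'' remark is therefore the entire odd case, not a residual detail; the standard proofs (Harmuth, and later Sun) proceed by giving explicit entrywise formulas here rather than by multiplicative reduction. In the even case, note that a $2\times2$ block with four distinct entries can never have all four line sums zero, so ``each block contributes $0$ to every row and column it meets'' already fails for your smaller building piece; the $2\times4$ pieces alone then force one dimension to be a multiple of $4$, which is precisely why the $m\equiv n\equiv2\pmod4$ obstacle you flag is real and must be handled by an explicit exceptional array, not by tiling.
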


An {\em integer Heffter array} $H(m, n; s, t)$ is an $m\times n$ array with entries from
$X=\{\pm1,\pm2,$ $\ldots,\pm ms\}$
such that each row contains $s$ filled cells and each column contains $t$ filled cells,
the elements in every row and column sum to 0 in ${\mathbb Z}$, and
for every $x\in A$, either $x$ or $-x$ appears in the array.
The notion of an integer Heffter array $H(m, n; s, t)$ was first defined by Archdeacon in \cite{arc1}.
Integer Heffter arrays with $m=n$ represent a type of magic square where each number from the set
$\{1,2,\ldots,n^2\}$ is used once up to sign. A Heffter array is {\em tight} if it has no empty cell; that is,
$n=s$ (and necessarily $m = t$).

\begin{thm}\label{tightHeffter}\cite{arc2}
Let $m, n$ be integers at least 3.
There is a tight integer Heffter array if and only if $mn\equiv 0, 3 \pmod 4$.
\end{thm}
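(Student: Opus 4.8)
The plan is to prove the two directions separately: necessity by one global parity count, and sufficiency by explicit constructions split into a handful of residue cases, with a block‑concatenation device reducing each case to finitely many small arrays.

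\emph{Necessity.} In a tight integer Heffter array each of $1,2,\dots,mn$ occurs exactly once as the absolute value of a cell entry, so the sum of all entries equals $\sum_{i=1}^{mn}\varepsilon_i i$ for suitable signs $\varepsilon_i\in\{+1,-1\}$. Summing the row sums (each zero) shows this quantity is $0$, i.e. $\{1,\dots,mn\}$ partitions into two parts of equal sum $\tfrac14 mn(mn+1)$. This forces $4\mid mn(mn+1)$, and since $mn$ and $mn+1$ are consecutive it is equivalent to $mn\equiv 0$ or $3\pmod 4$. (The hypothesis $m,n\ge 3$ is also used implicitly: a line of length $2$ would have to consist of a number together with its negative, impossible for distinct entries.)

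\emph{A reduction tool.} Call a filling \emph{shiftable} if every row and every column has equally many positive and negative entries; in such a filling one may add a fixed constant to the absolute value of every entry without changing any line sum, the added constants cancelling in pairs. Shiftable tight arrays exist for all even side lengths at least $4$ --- a $4\times4$ seed (for instance, checkerboard signs on a suitable arrangement of $1,\dots,16$) together with the juxtaposition trick below handles these. Then: if $A$ is a tight $m\times n$ array on $\{1,\dots,mn\}$ and $B$ is a shiftable tight $m\times n'$ array relabeled onto $\{mn+1,\dots,mn+mn'\}$, the horizontal join $[\,A\mid B\,]$ is a tight $m\times(n+n')$ array, and similarly a shiftable block may be stacked beneath $A$. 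Hence from any solution $(m,n)$ one reaches $(m,n+4)$ when $m$ is even and $(m+4,n)$ when $n$ is even, so only finitely many base arrays per case need be constructed directly.

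\emph{The cases.} For $m,n\ge 3$ the admissible pairs fall into: (I) $m,n$ both even; (II) $m,n$ both odd, with one $\equiv1$ and the other $\equiv3\pmod4$; (III) exactly one of $m,n$ even, which is then forced to be a multiple of $4$. In case (I) a direct construction is available: index rows and columns from $0$, place $in+j+1$ in cell $(i,j)$, and seek a $\pm1$ sign matrix that is balanced in each line with $\sum_j j\varepsilon_{ij}=0$ in every row and $\sum_i i\varepsilon_{ij}=0$ in every column, so that the affine contributions $in$ and $j+1$ cancel; this goes through cleanly when $4\mid m$ and $4\mid n$ (e.g. by tiling a fixed $4\times4$ sign pattern), while a side $\equiv2\pmod4$ is handled by permitting a small, prescribed imbalance in the affected lines. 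Cases (II) and (III) are the crux, and I expect designing them --- together with verifying the genuinely small base cases such as $3\times4$, $3\times5$, $4\times4$, $4\times5$, $5\times7$ --- to be the main obstacle: no filling with an odd side can be shiftable, so one must build by hand a small ``core'' of odd dimension whose rows and columns vanish using a deliberately irregular set of absolute values, and only then fringe it with even‑sided shiftable blocks to reach the prescribed size, with the row and column equations of the core balanced simultaneously rather than by any shifting symmetry.
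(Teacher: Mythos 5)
First, a point of context: the paper does not prove this theorem at all --- it is quoted from Archdeacon, Boothby and Dinitz \cite{arc2}, where the sufficiency direction is the content of essentially the entire paper. Your necessity argument is correct and is the standard one: the grand total of the entries is zero, negating an entry preserves the parity of the sum, so $\tfrac{1}{2}mn(mn+1)$ must be even, which for consecutive integers $mn$, $mn+1$ is exactly $mn\equiv 0,3\pmod 4$; and your parenthetical correctly notes that a line of length $2$ would have to be a pair $\{x,-x\}$, which a Heffter array (using exactly one of each pair) excludes, so $m,n\ge 3$ is genuinely used.

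The sufficiency direction has a concrete gap, located exactly where you flag ``the main obstacle.'' Your only reduction tool is juxtaposition of \emph{shiftable} blocks, and a shiftable block must have an even number of entries in every line, hence both of its dimensions even. Consequently the tool cannot grow an odd dimension, nor grow any dimension of an array whose other dimension is odd: in case (III), with $m$ odd and $4\mid n$, you can pass from $(m,n)$ to $(m+4,n)$ but never from $(m,n)$ to $(m,n+4)$, so you need a directly constructed $m_0\times n$ core for \emph{every} admissible $n$ --- an infinite one-parameter family, not finitely many base arrays; and in case (II), with $m$ and $n$ both odd, no shiftable fringe can be attached in either direction, so the entire two-parameter family must be built outright. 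Your claim that ``only finitely many base arrays per case need be constructed directly'' is therefore false in precisely the two cases that make the theorem hard, and no construction is supplied for them --- the proposal explicitly defers ``designing them'' as the expected difficulty. A further small but real issue: for Heffter arrays a column of height $2$ can never sum to zero (it would force $\{x,-x\}$), so the $2\times 4$-style seeds that drive the analogous SMA inductions (e.g.\ Lemma \ref{evenxeven}) are unavailable here; the minimal shiftable seeds are $4\times 4$, and your ``checkerboard signs'' pattern does not actually balance the rows (one needs a pattern such as $+,-,-,+$ per line). The even--even case and the $\equiv 2\pmod 4$ subcase are repairable along the lines you sketch, but as written the proposal proves only necessity.
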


A {\em square} integer Heffter array $H(n; k)$ is an integer Heffter array with $m=n$ and $s=t=k$.
In \cite{ADDY,DW} it is proved that:

\begin{thm}\label{Heffterwithemptycells}
There is an integer $H(n; k)$ if and only if $3\leq k\leq n$ and $nk\equiv 0,3 \pmod 4$.
\end{thm}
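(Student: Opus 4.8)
The plan is to handle the two directions separately: necessity is short, while sufficiency requires an explicit array for every admissible quadruple.

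\textbf{Necessity.} The inequality $k\le n$ is forced since a row cannot contain more filled cells than there are columns. If $k=1$, the lone entry of a row would have to equal $0$, which is not in the value set $\{\pm1,\dots,\pm nk\}$; if $k=2$, the two entries $a,b$ of a row satisfy $a+b=0$, so $b=-a$ and both $a$ and $-a$ occur, contradicting the requirement that exactly one of $\pm v$ be used for each magnitude $v$. Hence $k\ge3$. Finally, summing the $n$ row-sums gives $0=\sum_{v=1}^{nk}\varepsilon_v v$ for some signs $\varepsilon_v\in\{\pm1\}$, so $\sum_{v=1}^{nk}v=\tfrac{nk(nk+1)}2$ must be even, which holds exactly when $nk\equiv0$ or $3\pmod4$.

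\textbf{Sufficiency: reduction to cases.} For the converse one must exhibit an $H(n;k)$ whenever $3\le k\le n$ and $nk\equiv0,3\pmod4$. The tight subcase $k=n$ is already settled: when $k=n$ the divisibility condition says precisely that $n$ is even, and a tight integer Heffter array then exists by Theorem~\ref{tightHeffter}. So assume $k<n$, and split according to the residue of $k$ modulo $4$. The admissibility condition then also constrains $n$ modulo $4$ when $k\not\equiv0$: for instance $k\equiv2$ forces $n$ even, while $k$ odd forces $n\equiv0,3$ or $n\equiv0,1\pmod4$ according as $k\equiv1$ or $k\equiv3$. In each of these cases the target is a concrete $n\times n$ array.

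\textbf{Sufficiency: the workhorse and the main obstacle.} The standard tool is a cyclically developed (``shiftable'') construction: one designs a small block of signed integers together with a rule for shifting relabelled copies of it down the $n$ rows, arranged so that all column sums come out equal; since the entries are $\{\pm1,\dots,\pm nk\}$ with one sign per magnitude and the signs can be chosen (by the necessity computation) to make the grand total $0$, equal column sums must all be $0$, and the array is complete once every row sum is $0$. Thus the combinatorial heart is: partition the magnitudes $1,\dots,nk$ into $n$ blocks of size $k$, sign each block so it sums to $0$, and lay the blocks out in a shiftable pattern; the zero-sum blocks are assembled from familiar groupings of consecutive integers — alternating-sign runs when $4\mid k$, and mixtures of such runs with triples $\{x,y,-(x+y)\}$ when $k$ is odd or $k\equiv2\pmod4$. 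I expect the main obstacle to be not the generic construction but covering \emph{every} admissible residue class together with the small boundary cases without gaps: when $k$ is odd only half the residues of $n$ are allowed, and for $k\in\{3,4\}$ or when $n-k$ is small the clean shiftable layout degenerates and must be replaced by hand-built arrays — for example by seating a tight $H(k;k)$ (available when $k$ is even, by Theorem~\ref{tightHeffter}) in one corner and finishing the remaining band by magic-rectangle-style ideas (cf.\ Theorem~\ref{Th:Sun}). Throughout, the delicate point is the global bookkeeping: ensuring that across all the pieces the set $\{1,\dots,nk\}$ is used exactly once up to sign while every row and column still sums to $0$.
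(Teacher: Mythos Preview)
The paper does not prove this theorem; it is quoted from \cite{ADDY} and \cite{DW} and used later only as a black box (in Lemma~\ref{tnmult8} and Theorem~\ref{Heffter rectangles}). So there is no ``paper's own proof'' to compare against.

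Your necessity argument is correct and standard.

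For sufficiency, however, what you have written is a plan, not a proof. You describe a general strategy---partition the magnitudes $1,\dots,nk$ into zero-sum $k$-blocks, lay them out cyclically, patch edge cases---without actually carrying any of it out, and you yourself flag the hard part with ``I expect the main obstacle to be\ldots''. The substance of \cite{ADDY,DW} is precisely the explicit case analysis you only gesture at: for each admissible residue pair $(n\bmod 4,\,k\bmod 4)$ they produce a concrete family of arrays, verified entry by entry, together with separate constructions for small $k$ (notably $k=3$ and $k=5$) where the generic shiftable pattern degenerates. None of that appears here. Your fallback idea of ``seating a tight $H(k;k)$ in one corner'' is available only when $k$ is even, and even then completing the remaining L-shaped band so that all $n$ row sums and all $n$ column sums vanish while each leftover magnitude is used exactly once is itself a nontrivial design problem that you do not solve. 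As written, the sufficiency half has roughly the right shape but no content; turning it into a proof would mean reproducing the constructions that are the whole point of the two cited papers.
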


A {\em signed magic array} $SMA(m,n;s,t)$ is an $m \times n$ array with entries from $X$, where
$X=\{0,\pm1,\pm2,\ldots,\pm (ms-1)/2\}$ if $ms$ is odd and $X = \{\pm1,\pm2,\ldots,\pm ms/2\}$ if $ms$ is even,
such that precisely $s$ cells in every row and $t$ cells in every column are filled,
every integer from set $X$ appears exactly once in the array and
the sum of each row and of each column is zero.
In the case where $m = n$, we call the array a {\em signed magic square}.
Signed magic squares also represent a type of magic square where each number from the set $X$
is used once.

We use the notation $SMS(n;t)$ for a signed magic square with $t$ filled cells in each row and
$t$ filled cells in each column. An $SMS(n;t)$ is called $k$-{\em diagonal} if its entries all belong to $k$ consecutive diagonals (this includes broken diagonals as well). In the case where $k = t$, we abbreviate this to simply {\em diagonal}.
An $SMA(m,n;s,t)$ is called {\em tight}, and denoted $SMA(m,n)$, if it contains no empty cells; that is $m=t$ and $n=s$. Figure \ref{3in53by4} displays two examples of signed magic arrays.

\begin{figure}[ht]
$$\begin{array}{ccccc}
	\begin{array}{|c|c|c|c|c|}
	\hline
	2 & 3 & & & -5 \\ \hline
	-7 & 1 & 6 & & \\ \hline
	& -4 & 0 & 4 & \\ \hline
	& & -6 & -1 & 7 \\ \hline
	5 & & & -3 & -2 \\ \hline
	\end{array}&&&&	
	\begin{array}{|c|c|c|c|}
	\hline
	1 & -1 & 2 & -2 \\ \hline
	5 & 4 & -5 & -4 \\ \hline
	-6 & -3 & 3 & 6 \\ \hline
	\end{array}\\
\end{array}$$
\caption{A diagonal $SMS(5; 3)$ and an $SMA(3, 4)$.}
	\label{3in53by4}
\end{figure}



In this paper we investigate the existence of $SMA(m,n)$, $SMS(m;t)$ and $SMA(m,$ $2m;2t,t)$.
In Section \ref{TSMA} we prove an $SMA(m,n)$ exists precisely when $m = n = 1$, or when $m = 2$ and $n \equiv 0, 3 \pmod4$, or when $n = 2$ and $m \equiv 0, 3 \pmod4$, or when $m, n > 2$.
In Section \ref{SMS} we
show that there exists an $SMS(n;t)$ for $n \geq t \geq 1$ precisely when $n, t = 1$ or $n, t > 2$.
Finally, in Section \ref{SMR} we prove that there exists an $SMA(m,2m;2t,t)$ if $m \geq t \geq 3$ and $mt \equiv 0 \text{ or } 3 \pmod 4$ or $m, t \equiv 2 \pmod 4$.

In the following sections, the notation $[a,b]$ refers to the set of integers $z$ such that $a\leq z\leq b$.
Two partitions ${\mathcal P}_1$ and ${\mathcal P}_2$ of a set $A$ are {\em orthogonal} if the intersection
of each member of ${\mathcal P}_1$ and of ${\mathcal P}_2$ has precisely one element.

A rectangular array is {\em shiftable} if it contains the same number of positive as negative entries in every column and in every row (see \cite{arc1}).
These arrays are called \textit{shiftable} because they may be shifted to use different absolute values. By increasing the absolute value of each entry by $k$, we add $k$ to each positive entry and $-k$ to each negative entry. If the number of entries in a row is $2\ell$, this means that we add $\ell k + \ell(-k) = 0$ to each row, and the same argument applies to the columns. Thus, when shifted, the array retains the same row and column sums.

\section{Tight signed magic arrays}\label{TSMA}

We first examine the case of a tight array, with all of its cells filled. We will completely characterize the values of $m$ and $n$ for which tight $m \times n$ signed magic arrays exist through the use of several lemmata. The proof of the following lemma is trivial.

\begin{lem}\label{1xn}
	A tight $SMA(1,n)$ exists if and only if $n = 1$.
\end{lem}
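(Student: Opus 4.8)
The plan is to prove both directions of the biconditional, of which the forward direction is essentially the only content. For the ``if'' direction, when $n=1$ the array is a single $1\times 1$ cell; the set $X$ here is determined by $ms = 1\cdot 1 = 1$, which is odd, so $X = \{0,\pm1,\dots,\pm(1-1)/2\} = \{0\}$, and placing $0$ in the unique cell gives row sum and column sum both zero. Hence a tight $SMA(1,1)$ exists.

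For the ``only if'' direction, suppose a tight $SMA(1,n)$ exists. Then the array has a single row containing $n$ filled cells (all of them, since the array is tight), and the row sum must be zero. But each column also has exactly one filled cell, namely the single entry in that column, and the column sum must be zero as well; therefore every entry of the array is $0$. If $n>1$ this forces the value $0$ to appear more than once, contradicting the requirement that every element of $X$ appear exactly once (equivalently, all entries are distinct). Alternatively, one can argue from the definition of $X$: with $m=1$ and $s=n$, $X$ has exactly $n$ elements (namely $0,\pm1,\dots,\pm(n-1)/2$ if $n$ is odd, or $\pm1,\dots,\pm n/2$ if $n$ is even), and these $n$ distinct values must fill the $n$ cells of the single row, but their total sum is $0$ only trivially while each individual column-of-one must itself sum to zero, forcing each used value to be $0$ — impossible unless $n=1$.

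There is essentially no obstacle here; the lemma is labeled trivial in the excerpt. The only point requiring a moment's care is being explicit that in a tight $1\times n$ array the column sum condition degenerates to the statement that each entry is individually zero, which immediately collides with distinctness of entries once $n\ge 2$. I would present the argument in two or three sentences, handling the $n=1$ base case first and then the contradiction for $n\ge 2$.
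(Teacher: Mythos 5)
Your proof is correct and is exactly the trivial argument the paper has in mind (the paper omits the proof, noting only that it is trivial): each column of a tight $1\times n$ array has a single entry whose column sum must be zero, forcing all entries to equal $0$, which contradicts the distinctness of the $n$ elements of $X$ once $n\ge 2$, while $n=1$ works with $X=\{0\}$.
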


\begin{lem}\label{2xn}
	An $SMA(2,n)$ exists if and only if $n \equiv 0, 3 \pmod4$.
\end{lem}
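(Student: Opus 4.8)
The plan is to reduce the existence question to an elementary statement about signed sums of $\{1,\dots,n\}$. First I would note that in a tight $SMA(2,n)$ every column has exactly two filled cells whose entries sum to zero, so each column must consist of a pair $\{x,-x\}$. Since the entry set is $X=\{\pm1,\pm2,\dots,\pm n\}$ (here $ms=2n$ is even, so $ms/2=n$) and every element of $X$ occurs exactly once, the absolute values appearing in the $n$ columns are precisely $1,2,\dots,n$, each once. Writing $\epsilon_j j$ (with $\epsilon_j\in\{+1,-1\}$, indices chosen so that $j$ ranges over $1,\dots,n$) for the first-row entry of column $j$, the second-row entry of that column is $-\epsilon_j j$, and the second-row sum is automatically the negative of the first-row sum. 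Hence such an array is magic if and only if there exist signs $\epsilon_j$ with $\sum_{j=1}^{n}\epsilon_j j = 0$.

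For the "only if" direction, a choice of signs realizing $\sum_{j=1}^n \epsilon_j j = 0$ partitions $\{1,\dots,n\}$ into two blocks of equal sum $n(n+1)/4$, which forces $4 \mid n(n+1)$; since $n$ and $n+1$ are consecutive integers, this holds precisely when $n\equiv 0$ or $3 \pmod 4$.

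For the "if" direction, I would produce the signs explicitly. Any four consecutive integers satisfy $k-(k+1)-(k+2)+(k+3)=0$, so when $n\equiv 0\pmod 4$ I partition $\{1,\dots,n\}$ into $n/4$ such blocks of four; when $n\equiv 3\pmod 4$ I first use $1+2-3=0$ and then partition the remaining set $\{4,\dots,n\}$, which has $n-3\equiv 0\pmod 4$ elements, into blocks of four as above. Placing $\epsilon_j j$ in row $1$ and $-\epsilon_j j$ in row $2$ of column $j$ then gives the desired $SMA(2,n)$: every element of $X$ appears exactly once, each column sums to zero by construction, and each row sums to $\pm\sum_{j}\epsilon_j j = 0$.

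I do not expect a genuine obstacle in this lemma; the one point that needs care is the opening reduction — observing that the column condition pins each column down to a $\pm$ pair and thereby collapses the entire problem to the single equation $\sum \epsilon_j j = 0$. After that, the argument is just the classical fact that $\{1,\dots,n\}$ admits a zero-sum signing exactly when $n\equiv 0,3\pmod 4$, packaged with the routine block construction above.
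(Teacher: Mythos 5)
Your proof is correct and follows essentially the same route as the paper: the same reduction of each column to a pair $\{x,-x\}$, the same divisibility obstruction for $n\equiv 1,2\pmod 4$ (your $4\mid n(n+1)$ condition is the paper's parity argument in another guise), and the same sufficiency construction, since the paper's shiftable $2\times 4$ block $1,-2,-3,4$ is exactly your $k-(k+1)-(k+2)+(k+3)=0$ pattern, merely packaged as an induction rather than a direct partition into blocks. No gaps.
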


\begin{proof}
	In an $SMA(2,n)$, let $x$ be a value in a column; then $-x$ must be the other value if their sum is zero. Thus, each row in the $2 \times n$ array contains every absolute value from $1$ to $n$ exactly once. If $n \equiv 1 \pmod4$, then $n(n+1) \equiv 2 \pmod4$, so $\sum_{i=1}^{n} i = \frac{1}{2}n(n+1)$ is odd. Note that for all $x$, $x - (-x) = 2x$ is even, so by replacing any number by its negative in a sum, one cannot change the parity of the sum. Because each row in the $2 \times n$ array contains every absolute value from $1$ to $n$ exactly once, their sum is equal to $\sum_{i=1}^{n} i$ after some of the positives have been replaced by negatives; but this sum will always be odd, so the sum of a row cannot be the even number 0. Thus, no tight $2 \times n$ signed magic array exists. If $n \equiv 2 \pmod4$, then $n(n+1) \equiv 2 \pmod4$, and the same argument holds as in the previous case.
	
Now let $n \equiv 0, 3 \pmod4$. By induction we prove that an $SMA(2,n)$ exists.  	
A $2\times 3$ array with the first row $1,2,-3$ and the second row $-1,-2,3$ is obviously an $SMA(2,3)$.
When $n=4$ we use the array given in Figure \ref{2x4}.
	
	\begin{figure}[ht]
		$$\begin{array}{| c | c | c | c |}
		\hline
		1 & -2 & -3 & 4 \\ \hline
		-1 & 2 & 3 & -4 \\ \hline
		\end{array}$$
		\caption{A shiftable $SMA(2,4)$}
		\label{2x4}
	\end{figure}
	
	Now let $n \equiv 0, 3 \pmod4$ with $n > 4$, and assume the existence of an $SMA(2,n - 4)$. Onto the right side of this array we add four columns, leaving a $2 \times 4$ space of cells to be filled. Note that the $2 \times 4$ array in Figure \ref{2x4} is shiftable, so we may merely shift its absolute values from $1$ through $4$ to $n - 3$ through $n$ and use it to fill the empty space. As the shiftable array has zero row and column sums, the sum of each row is still zero, and the sum of each of the four new columns is also zero. We thus construct an $SMA(2,n)$.
Now the result follows by strong induction.
\end{proof}

The remaining cases split based on the parity of $m$ and $n$.

\subsection{Tight signed magic arrays with $m, n$ both even}

\begin{lem}\label{evenxeven}
	A shiftable $SMA(m,n)$ exists if $m, n$ are both even and greater than $2$.
\end{lem}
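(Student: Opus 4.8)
The natural approach is a direct block construction. Since $m,n$ are both even, I'd like to tile the $m\times n$ array with $2\times 2$ or $2\times 4$ shiftable blocks whose entries form a partition of the symbol set into consecutive-value packets, each block having zero row and column sums. The building block is essentially the $2\times 4$ array in Figure \ref{2x4}, which is shiftable with entries $\{\pm1,\pm2,\pm3,\pm4\}$, together with the obvious $2\times 2$ shiftable block with entries $\{a,-a,b,-b\}$ arranged as $\begin{smallmatrix}a&-a\\-a&a\end{smallmatrix}$ — wait, that doesn't use $b$; rather $\begin{smallmatrix}a&b\\-a&-b\end{smallmatrix}$ has zero column sums only if $a=-b$... so actually the clean $2\times2$ shiftable block with zero row AND column sums is $\begin{smallmatrix}a&-a\\-a&a\end{smallmatrix}$ using only $\pm a$. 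That wastes the pairing, so I'd instead prefer to build rows of $2\times 4$ blocks.

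Concretely: first handle $n\equiv 0\pmod 4$ by partitioning the columns into groups of $4$ and filling each $2$-row strip with shifted copies of the Figure \ref{2x4} block, using disjoint consecutive value-packets; stacking $m/2$ such strips (each using its own range of absolute values) gives a shiftable $SMA(m,n)$. For $n\equiv 2\pmod 4$, I can't tile purely with $2\times4$ blocks, so I'd peel off two columns handled by a $2\times2$-type shiftable sub-construction (or, better, use a $2\times 6$ shiftable block) and tile the remaining $n-2\equiv 0\pmod4$ columns as before; again stack $m/2$ row-strips. The key point is that each block is shiftable, so its absolute values can be relocated to any consecutive range $[k+1,k+r]$, letting me fit the blocks together so that every absolute value in the required set $\{1,\dots,mn/2\}$ is used exactly once. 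Shiftability of the whole array is immediate since it's a disjoint union (by rows and by columns) of shiftable blocks.

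The main obstacle is the $n\equiv 2\pmod 4$ case: I need an explicit small shiftable array — a $2\times 2$ block can only be $\begin{smallmatrix}a&-a\\-a&a\end{smallmatrix}$, which forces a repeated absolute value unless I allow two *distinct* values, which a $2\times2$ zero-sum shiftable block cannot do. So the honest fix is a $2\times 6$ shiftable block with entries $\{\pm1,\dots,\pm6\}$, zero row and column sums, one positive and one negative per column — e.g. a suitable arrangement like $\begin{smallmatrix}1&-2&3&-4&5&-? \\ -1&2&-3&4&-5&?\end{smallmatrix}$ needs the last column to sum to zero and the rows to sum to zero, which is easily arranged with a bit of care (the rows currently sum to $\pm(1-2+3-4+5)=\pm3$, so the sixth column should contribute $\mp3$, i.e. entries $-3,3$ — but $3$ is used; instead permute signs so the first five columns give row sum $\pm c$ with $6\nmid$... actually choose the block so rows sum to $\mp6$ and put $6,-6$ in the last column). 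I'd also need to double-check the parity/divisibility: $mn/2$ is an integer since $m$ is even, so the symbol set has a consistent size, and the block value-packets partition $[1,mn/2]$ exactly. Finally I should verify the tiny edge cases $m=4$ or $n=4$ aren't degenerate — they aren't, since $4\equiv 0\pmod4$ and $m/2\ge2$.

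(If the authors instead stack via a cleaner device — e.g. noting that the Cartesian-product of a shiftable $2\times n$ array with an all-ones column pattern works, or invoking Lemma \ref{2xn} plus a shifting argument row-strip by row-strip — that would streamline the write-up, but the combinatorial content is the same: disjoint shiftable strips with relocated value ranges.)
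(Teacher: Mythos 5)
Your $n\equiv 0\pmod 4$ case is fine and matches the spirit of the paper's induction (tile $2$-row strips with shifted copies of the $2\times 4$ block of Figure \ref{2x4}). But your fix for $n\equiv 2\pmod 4$ does not work: the $2\times 6$ shiftable block you want to construct does not exist. If a $2\times 6$ block has zero column sums with two entries per column, each column must be $\{x,-x\}$, so the block uses six distinct absolute values, say $1,\dots,6$, and each row is a signed copy of $1,\dots,6$. Its row sum is congruent mod $2$ to $1+2+\dots+6=21$, which is odd, so it can never be $0$ --- this is exactly the parity obstruction of Lemma \ref{2xn} for $n\equiv 2\pmod 4$. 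Your own repair attempt (force row sums $\mp 6$ on the first five columns and put $6,-6$ in the last) fails for the same reason: any signed sum of $\{1,\dots,5\}$ is odd and cannot equal $\pm 6$. More generally, any $2$-row strip with zero row and column sums whose absolute values form a consecutive packet of length $n$ is impossible when $n\equiv 2\pmod 4$, since $k+1+\dots+k+n$ is then odd. So your strip decomposition cannot cover, e.g., the $6\times 6$ or $4\times 6$ cases at all.

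The missing idea is that the $n\equiv 2\pmod 4$ (and $m\equiv 2\pmod 4$) cases require blocks spanning at least four rows. The paper supplies explicit shiftable base arrays for $(4,4)$, $(4,6)$ and $(6,6)$ --- note the last two columns of the $4\times 6$ array form a $4\times 2$ block whose rows are $\{x,-x\}$ and whose columns sum to $9-10-11+12=0$, a trick unavailable in two rows --- and then inducts by gluing $m\times 4$ (resp.\ $4\times n$) slabs tiled by $2\times 4$ (resp.\ $4\times 2$) shiftable blocks. To salvage your write-up you would need to replace the $2\times 6$ block with a genuine $4\times 6$ (or $4\times 2$) shiftable building block and restructure the tiling accordingly, at which point you have essentially reconstructed the paper's argument.
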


\begin{proof}
	Proceed by strong induction first on $n$ and then on $m$. As the base case, we provide arrays for $(m, n) = (4, 4)$, $(4, 6)$, and $(6, 6)$ in Figures \ref{4x4,4x6} and \ref{6x6}. Note that as the transpose of a signed magic array is a signed magic array, we need not provide a separate $6 \times 4$ array and may transpose the $4 \times 6$ array.
	
	\begin{figure}[ht]
   $$\begin{array}{cccc}
		\begin{array}{| c | c | c | c |}
		\hline
		1 & -2 & -3 & 4 \\ \hline
		-1 & 2 & 3 & -4 \\ \hline
		5 & -6 & -7 & 8 \\ \hline
		-5 & 6 & 7 & -8 \\ \hline
		\end{array}&&&
		\begin{array}{| c | c | c | c | c | c |}
		\hline
		1 & -2 & -3 & 4 & 9 & -9 \\ \hline
		-1 & 2 & 3 & -4 & -10 & 10 \\ \hline
		5 & -6 & -7 & 8 & -11 & 11 \\ \hline
		-5 & 6 & 7 & -8 & 12 & -12 \\ \hline
		\end{array}\\
\end{array}$$
\caption{A shiftable $SMA(4,4)$ and a shiftable $SMA(4,6)$}
\label{4x4,4x6}
	\end{figure}
	
	\begin{figure}[ht]
		$$\begin{array}{| c | c | c | c | c | c |}
		\hline
		6 & -4 & -12 & -3 & 2 & 11 \\ \hline
		-13 & 15 & 16 & 7 & -8 & -17 \\ \hline
		10 & -18 & -5 & -14 & 18 & 9 \\ \hline
		-9 & 1 & 14 & 5 & -1 & -10 \\ \hline
		17 & 8 & -16 & -7 & -15 & 13 \\ \hline
		-11 & -2 & 3 & 12 & 4 & -6 \\ \hline
		\end{array}$$
		\caption{A shiftable $SMA(6,6)$}
		\label{6x6}
	\end{figure}
	
	Now, let $m \in \{4, 6\}$ and $n$ be even, and assume that there exists a shiftable $SMA(m,$ $n - 4)$. We may extend this array by adding four columns to create an $m \times n$ array. Note that the empty $m \times 4$ space, because $m$ is even, partitions into $2 \times 4$ rectangles, each of which may be filled by a shifted copy of Figure \ref{2x4} using a method analogous to that of Lemma \ref{2xn} with the appropriate absolute values. As the shifted copies each have a row and column sum of zero, they do not change the row sums from the $m \times (n - 4)$ array, and the sums of the new columns will be zero as well. Therefore, a shiftable $SMA(m, n)$ exists. Hence, by strong induction on $n$,
a shiftable $SMA(m, n)$ exists for $m \in \{4, 6\}$ and $n > 2$ even.
	
	Now, let $m$ and $n$ both be even, and assume that there exists a shiftable $SMA(m - 4,n)$. We may extend this array by adding four rows to create an $m \times n$ array. Note that the empty $4 \times n$ space, because $n$ is even, partitions into $4 \times 2$ rectangles, each of which may be filled by a shifted copy of the transpose of Figure \ref{2x4} using a method analogous to that of the previous paragraph. As the shifted copies each have a row and column sum of zero, they do not change the column sums from the $m \times (n - 4)$ array, and the sums of the new rows will be zero as well. Therefore, a tight shiftable $SMA(m, n)$ exists. By strong induction on $m$, a shiftable $SMA(m,n)$ always exists for $m > 2, n > 2$ even.
\end{proof}

\subsection{Tight signed magic arrays with $m, n$ both odd}

Recall that a magic rectangle is defined as an $m \times n$ array whose entries are precisely the integers from $0$ to $mn-1$ wherein the sum of each row is $c$ and the sum of each column is $r$.

\begin{lem}\label{oddxodd}
	A $SMA(m, n)$ exists if $m, n$ are both odd and greater than $1$.
\end{lem}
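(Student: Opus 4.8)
The plan is to build the array directly from a magic rectangle by a single global shift of all entries. Since $m$ and $n$ are both odd, $mn$ is odd; as the array is tight we have $s=n$, so $ms = mn$ is odd and the prescribed entry set is $X=\{0,\pm1,\ldots,\pm(mn-1)/2\}$, a set of exactly $mn$ integers — precisely the number of cells in the completely filled $m\times n$ array. Note also that $(mn-1)/2$ is an integer, precisely because $mn$ is odd, so the shift below makes sense.

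First I would invoke Theorem \ref{Th:Sun}. Because $m$ and $n$ are both odd, $m\equiv n\pmod 2$; because both are at least $3$, we have $m+n\geq 6>5$; and $m,n>1$. All three hypotheses are met, so an $m\times n$ magic rectangle $M$ exists, with entries exactly $0,1,\ldots,mn-1$, constant row sum $c$, and constant column sum $r$. Counting the total of all entries two ways gives $mc=\frac{mn(mn-1)}{2}$, hence $c=\frac{n(mn-1)}{2}$, and likewise $r=\frac{m(mn-1)}{2}$.

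Next I would form the array $A$ by subtracting $(mn-1)/2$ from every entry of $M$. Since the entries of $M$ range over $\{0,1,\ldots,mn-1\}$, each used once, the entries of $A$ range over $\{-(mn-1)/2,\ldots,-1,0,1,\ldots,(mn-1)/2\}=X$, each used once, and $A$ has no empty cell; so $A$ has the correct multiset of values and the tight shape. Finally I would check the zero sums: a row of $A$ has $n$ entries summing to $c-n\cdot\frac{mn-1}{2}=0$, and a column has $m$ entries summing to $r-m\cdot\frac{mn-1}{2}=0$. Hence $A$ is a tight $SMA(m,n)$, as desired.

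There is essentially no genuine obstacle here beyond bookkeeping; the two things to verify carefully are that the magic-rectangle hypothesis $m+n>5$ is automatic under ``both odd and greater than $1$'' (the smallest case being $m=n=3$, where $m+n=6$), and that the shift amount $(mn-1)/2$ is an integer. If one wished to avoid citing Theorem \ref{Th:Sun}, the bulk of the work would instead be a direct recursive construction of magic rectangles of odd dimensions, but using the known result makes the argument immediate.
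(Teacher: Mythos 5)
Your proposal is correct and follows essentially the same route as the paper: invoke Theorem \ref{Th:Sun} (checking $m+n\geq 6$), subtract $(mn-1)/2$ from every entry, and verify the entry set and the zero row/column sums. The only cosmetic difference is that you compute the constant sums $c$ and $r$ explicitly, whereas the paper argues that the (equal) row sums must all be zero because the total of all entries is zero.
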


\begin{proof}
	
	Let $m, n$ be odd and greater than $1$. Then $m + n \geq 6$. Hence by Theorem \ref{Th:Sun} there exists an $m \times n$ magic rectangle, say $A$, and let $a_{r, c}$ be the entry in row $r$ and column $c$ of $A$.
	
	We will then construct an array $B$ wherein $b_{r, c} = a_{r, c} - w$, where $w = \frac{mn-1}{2}$. As the entries in $A$ are precisely the integers $0$ through $mn-1$, it follows that the entries in $B$ are precisely the integers $-\frac{mn-1}{2}$ through $\frac{mn-1}{2}$, the required set of integers for a tight signed magic array. It remains to be shown that $B$ has rows and columns summing to zero.
	
	If the sum of column $c$ in $A$ is $s$, then the sum of column $c$ in $B$ is $s - mw$, as we subtract $w$ from each of the $m$ entries in the column. In particular, note that as $s$ is constant, this entire expression is independent of $c$; so the sum of each column in $B$ is the same. The sum of all of the entries in $B$ is $\sum_{i=-w}^{w} i = \sum_{i=-w}^{-1} i + 0 + \sum_{i=1}^{w} i = -\frac{1}{2}w(w+1) + 0 + \frac{1}{2}w(w+1) = 0$. If the sum of each column is the same, and the sum of all of the columns together is zero, then the sum of each column must be zero.
	
	Likewise, if the sum of row $r$ in $A$ is $s$, then the sum of row $r$ in $B$ is $s - nw$, as we subtract $w$ from each of the $n$ entries in the row. In particular, note that as $s$ is constant, this entire expression is independent of $r$; so the sum of each row in $B$ is the same. The sum of all of the entries in $B$ is 0, so if the sum of each row is the same, then that sum must be zero. Hence, $B$ is an $SMA(m,n)$, where $m, n$ are odd and greater than 1.
\end{proof}
	

\subsection{Tight signed magic arrays with $m$ odd, $n$ even}

For this case, we will need to make use of an induction argument with two base cases. The base cases are given in the following lemmata.

\begin{lem} \label{3xeven}
	An $SMA(3,n)$  exists if $n$ is even.
\end{lem}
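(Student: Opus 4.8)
The plan is to prove the existence of a tight $SMA(3,n)$ for all even $n$ by an explicit construction, treating it as a base case for a later induction on $m$ (in steps of $2$) for $m$ odd and $n$ even. Since $n$ is even, the array has $3n$ filled cells and uses the symmetric set $X = \{\pm 1, \pm 2, \ldots, \pm 3n/2\}$. The natural strategy is to fill the array column by column, where each of the $n$ columns must contain three entries summing to zero.

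First I would handle small cases by hand: $n = 2$ requires a $3 \times 2$ array, and $n = 4$ a $3 \times 4$ array, giving explicit tables (likely the $n=4$ case will be displayed as a figure, as is the paper's style). Then for the inductive step I would try to reduce $n$ by $4$: assuming an $SMA(3, n-4)$ exists, adjoin four columns and fill the resulting $3 \times 4$ empty block with a shifted copy of a shiftable $SMA(3,4)$ — but here is the catch, a $3 \times 4$ array has three entries per column, an odd number, so it cannot be shiftable. So instead I would look for a $3 \times 4$ block whose entries are a set of the form $\{\pm a, \pm(a+1), \ldots, \pm(a+5)\}$ (six consecutive absolute values, since $3 \cdot 4 / 2 = 6$) with all row and column sums zero, and which moreover is "shiftable" in the weaker sense that increasing all absolute values by a constant preserves the zero sums. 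Whether such a convenient self-contained block exists, and whether its absolute values line up as the next six values $\{n-3-2, \ldots\}$... actually the bookkeeping on which absolute values appear is the real subtlety, since $3 \times 4 = 12$ cells need $6$ absolute values but an $SMA(3, n-4)$ already uses $\{1, \ldots, 3(n-4)/2\}$, leaving exactly $\{3(n-4)/2 + 1, \ldots, 3n/2\}$, which is $6$ consecutive values — so this does match up, provided the block can be built shiftably.

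Alternatively, and perhaps more robustly, I would reduce $n$ by $2$ rather than $4$: a $3 \times 2$ block has columns of three entries (sum zero each) and rows of two entries, and one can ask for such a block using two consecutive absolute values $\{\pm a, \pm(a+1)\}$... but three cells per column with only values $\pm a, \pm(a+1)$ cannot sum to zero in general, so a $3 \times 2$ block needs three distinct absolute values ($6$ cells, $3$ values), which again matches the leftover set $\{3(n-2)/2+1, 3(n-2)/2+2, 3(n-2)/2+3\}$ of size $3$. The hard part will be producing a single parametrized $3 \times 2$ (or $3 \times 4$) gadget with consecutive absolute values, zero row and column sums, and the shiftability property, and then verifying that splicing it onto the smaller array preserves all row sums (the three new row-segments must each sum to zero) and gives zero column sums for the new columns. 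Once that gadget is in hand, the induction is immediate: base cases $n \in \{2, 4\}$ (or just $n = 2$ if reducing by $2$), inductive step adjoining the shifted gadget, and strong induction on $n$ completes the proof.
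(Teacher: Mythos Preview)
Your inductive strategy has a fatal obstruction that you partly sense but do not confront. Any $3 \times 4$ (or $3 \times 2$) gadget you append must have each of its columns---three entries---summing to zero, with all entries drawn from $\{\pm a, \pm(a{+}1), \ldots, \pm(a{+}5)\}$ where $a = \tfrac{3(n-4)}{2} + 1$. But three integers each of absolute value at least $a$ can sum to zero only if one absolute value equals the sum of the other two; with all three absolute values in $[a, a+5]$ this forces $2a \leq a + 5$, i.e.\ $a \leq 5$. Already at $n = 8$ one has $a = 7$, so no such $3 \times 4$ block exists at all, and the $3 \times 2$ version (absolute values in $[a,a+2]$, forcing $a \leq 2$) fails even sooner. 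This is not a matter of being clever about the gadget: it is arithmetically impossible to confine a zero-sum triple to a short interval of absolute values far from the origin. Your remark that the $3\times 4$ block ``cannot be shiftable'' because columns have odd length is really a symptom of this deeper problem.

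The paper sidesteps the issue entirely by giving a direct closed-form construction for all even $n \geq 6$ (with $n=2$ and $n=4$ handled by explicit small arrays). Rows $1$ and $3$ are specified by explicit formulas in $j$ and $p_j = \lceil j/2 \rceil$, split into cases by $j \bmod 4$, and row $2$ is then forced by the column condition $a_{2,j} = -(a_{1,j} + a_{3,j})$; one verifies that the resulting row sums vanish and that every element of $X$ appears exactly once. The key structural feature is that each column mixes a \emph{small} absolute value (row $1$) with two large ones (rows $2$ and $3$), so the zero-sum constraint on columns is met globally rather than by any local block---precisely what your approach cannot achieve.
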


\begin{proof}
An $SMA(3,2)$ and an $SMA(3,4)$ are given in Figure \ref{3x2.3x4}.

\begin{figure}[ht]
$$\begin{array}{cccc}
    \begin{array}{|c|c|}
	\hline
	1 & -1 \\\hline
	2 & -2 \\\hline
	-3 & 3 \\\hline
	\end{array}&&&
	\begin{array}{|c|c|c|c|}\hline
	1 & -1 & 2 & -2 \\\hline
	5 & 4 & -5 &-4 \\\hline
	-6 & -3 & 3 & 6 \\\hline
	\end{array}\\
\end{array}$$
\caption{An $SMA(3,2)$ and an $SMA(3,4)$}
		\label{3x2.3x4}
	\end{figure}

    Now let $n=2k\geq 6$ and $p_{j}=\lceil\frac{j}{2}\rceil$ for $1\leq j\leq 2k$ . Define a $3\times n$ array $A=[a_{i,j}]$ as follows. For $1\leq j\leq 2k$,

$$a_{1,j}= \begin{cases}
	- \left(\frac{3p_{j}-2}{2}\right) & j \equiv 0 \pmod4 \\
	\frac{3p_{j}-1}{2}& j \equiv 1 \pmod4 \\
	-\left(\frac{3p_{j}-1}{2}\right) & j \equiv 2 \pmod4 \\
	\frac{3p_{j}-2}{2} & j \equiv 3 \pmod4. \\
	
\end{cases}$$
For the third row we define $a_{3,1}=-3k$, $a_{3,2k}=3k$ and when $2\leq j\leq 2k-1$

$$a_{3,j}= \begin{cases}
	- 3(k-p_{j}) & j \equiv 0\pmod4 \\
	3(k-p_{j}+1) & j \equiv 1 \pmod4 \\
	-3(k-p_{j}) & j \equiv 2 \pmod4 \\
	3(k-p_{j}+1) & j \equiv 3 \pmod4. \\
\end{cases}$$
Finally, $a_{2,j}=-(a_{1,j}+a_{3,j})$ for $1\leq j\leq2k$ (see Figure \ref{3x10}).
It is straightforward to see that array $A$ is an $SMA(3,n)$.
\end{proof}

\begin{figure}[ht]
$$\begin{array}{|c|c|c|c|c|c|c|c|c|c|}\hline
1&-1&2&-2&4&-4&5&-5&7&-7 \\\hline
14&13&-14&11&-13&10&-11&8&-10&-8	\\\hline
-15&-12&12&-9&9&-6&6&-3&3&15\\\hline
\end{array}$$
\caption{An $SMA(3,10)$ using the method given in Lemma \ref{3xeven}.}
		\label{3x10}
\end{figure}

\begin{lem}\label{5xeven}
	An $SMA(5,n)$ exists if $n$ is even and greater than $2$.
\end{lem}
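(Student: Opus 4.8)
The plan is to build an $SMA(5,n)$ as a vertical concatenation of an $SMA(3,n)$ occupying rows $1$–$3$ and an $SMA(2,n)$ occupying rows $4$–$5$, where the two sub-arrays use complementary sets of absolute values whose union is $\{1,2,\dots,5n/2\}$ (note $5n$ is even since $n$ is even). With such a decomposition every column of the big array is the concatenation of a zero-sum column of the $SMA(3,n)$ and a zero-sum column of the $SMA(2,n)$, hence sums to zero, while the row sums are inherited directly from the two pieces. So the whole problem reduces to splitting $\{1,\dots,5n/2\}$ into a $(3n/2)$-element set $S$ carrying an $SMA(3,n)$ and an $n$-element set $\overline S$ carrying an $SMA(2,n)$.

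For $n\equiv 0\pmod 4$ this is immediate. Take $S=\{1,\dots,3n/2\}$ and use the $SMA(3,n)$ produced by Lemma~\ref{3xeven}. Take $\overline S=\{3n/2+1,\dots,5n/2\}$; since $4\mid n$ this range of $n$ consecutive integers splits into $n/4$ blocks of four consecutive integers, and onto each block we place a shifted copy of the shiftable $SMA(2,4)$ of Figure~\ref{2x4} (as in Lemma~\ref{2xn}), which after shifting still has all row and column sums zero. Concatenating these blocks horizontally gives an $SMA(2,n)$ on $\overline S$, and stacking it below the $SMA(3,n)$ completes the construction, including the small case $n=4$ (where we use the $SMA(3,4)$ of Figure~\ref{3x2.3x4}).

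The case $n\equiv 2\pmod 4$ is the real obstacle, and the obstruction is a parity one. If $n\equiv 2\pmod4$ then $\sum_{i=1}^{n} i$ and $\sum_{i=3n/2+1}^{5n/2} i$ are both odd, so by the parity argument of Lemma~\ref{2xn} there is no $SMA(2,n)$ on any set of $n$ consecutive integers; worse, one cannot patch this with an inductive ``append a block'' argument, because a column of a $5$-row array has an odd number of cells and therefore can never contain equally many positive and negative entries, so no $5$-row block is shiftable in a way that preserves column sums. Thus a bespoke construction is required. I would handle it as follows: display an explicit $SMA(5,6)$ in a figure as a base case, and for $n\equiv2\pmod4$ with $n\ge 6$ either (i) give closed-form piecewise formulas for the entries $a_{i,j}$ depending on $j\bmod 4$ and on $n$, in the style of Lemma~\ref{3xeven}, and verify the finitely many types of row and column sums directly; or (ii) keep the $3+2$ decomposition but use a slightly perturbed split, for instance giving rows $4$–$5$ the even-sum set $\overline S=\{1,\dots,n-1,n+1\}$ and rows $1$–$3$ the complement $S=\{n\}\cup\{n+2,\dots,5n/2\}$, and then adapting the construction of Lemma~\ref{3xeven} to this ``nearly consecutive'' set $S$ by rerouting the one or two entries whose absolute values have changed.

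The main obstacle is exactly this $n\equiv 2\pmod4$ case: the value-set parity obstruction rules out a uniform modular construction, and the absence of column-shiftable odd-height blocks rules out the inductive shortcut, so one is forced into an ad hoc construction (explicit formulas, or a carefully chosen non-consecutive split) in which the delicate point is arranging the few exceptional entries so that the affected row sum and the affected column sum are corrected simultaneously; the remaining verifications are routine but tedious.
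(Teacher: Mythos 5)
Your $n \equiv 0 \pmod 4$ case is exactly the paper's argument and is fine. The problem is the $n \equiv 2 \pmod 4$ case: you correctly diagnose the parity obstruction (no $SMA(2,n)$ exists on $n$ consecutive absolute values, and no $5$-row block is column-shiftable), but you then stop at a plan. Neither of your two options is carried out: option (i) promises unspecified piecewise formulas, and option (ii) still requires you to exhibit a zero-column-sum $2\times n$ array on the non-consecutive set $\{1,\dots,n-1,n+1\}$ \emph{and} a modified $SMA(3,n)$-type array on the complementary non-consecutive set, neither of which you construct. The sentence ``the delicate point is arranging the few exceptional entries so that the affected row sum and the affected column sum are corrected simultaneously'' is precisely the content of the lemma in this case, and it is left undone. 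As written, the lemma is proved only for $n \equiv 0 \pmod 4$.

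For comparison, the paper's resolution keeps the consecutive split $[1,3n/2]$ on rows $1$--$3$ and does \emph{not} demand that the two halves of each of columns $1$ and $2$ sum to zero separately. It uses the fact that the first two entries of row $2$ of the Lemma~\ref{3xeven} array are $x_1=\tfrac{3n}{2}-1$ and $x_2=\tfrac{3n}{2}-2$, which differ by $1$; swapping them perturbs the sums of columns $1$ and $2$ (over rows $1$--$3$) to $-1$ and $+1$. The bottom-left $2\times 2$ corner is then filled with $-\tfrac{3n}{2}-1,\ \tfrac{3n}{2}+1$ over $\tfrac{3n}{2}+2,\ -\tfrac{3n}{2}-2$, whose columns sum to $+1$ and $-1$ (cancelling the perturbation) and whose rows sum to $0$; the remaining $2\times(n-2)$ block of rows $4$--$5$ has width divisible by $4$ and is tiled with shifted copies of the shiftable $SMA(2,4)$ using absolute values $\tfrac{3n}{2}+3$ through $\tfrac{5n}{2}$. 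Some such explicit cancellation device is what your write-up is missing.
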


\begin{proof}
	If $n$ is a multiple of 4, we first apply Lemma \ref{3xeven} to construct an $SMA(3,n)$. We then adjoin two more rows to the bottom of this array, creating a $2 \times n$ space. As $n$ is a multiple of $4$, we can fill this space with shifted copies of Figure \ref{2x4} such that the sum of each column remains zero and the sums of rows 4 and 5 are also zero (see Figure \ref{5x4}).
	
	\begin{figure}[ht]
		$$\begin{array}{| c | c | c | c |}
		\hline
		1 & -1 & 2 & -2 \\ \hline
		5 & 4 & -5 & -4 \\ \hline
		-6 & -3 & 3 & 6 \\ \hline
		\cellcolor{gray!25}7 & \cellcolor{gray!25}-8 & \cellcolor{gray!25}-9 & \cellcolor{gray!25}10 \\ \hline
		\cellcolor{gray!25}-7 & \cellcolor{gray!25}8 & \cellcolor{gray!25}9 & \cellcolor{gray!25}-10 \\ \hline
		\end{array}$$
		\caption{An $SMA(5,4)$ using the method given in Lemma \ref{5xeven} when $n\equiv 0 \pmod 4$.}
		\label{5x4}
	\end{figure}
	
	If $n$ is not a multiple of 4, we again use the algorithm of Lemma \ref{3xeven} to first construct an
 $SMA(3,n)$, say $A$. This array will use numbers with absolute value $1$ through $\frac{3n}{2}$.
Note that the first two entries in the top row of this array will be $1$ and $-1$, and provided $n > 2$, the first two entries in the bottom row will be $-\frac{3n}{2}$ and $-\frac{3n}{2} + 3$. Using the fact that each column sums to zero gives us that the first two entries in the middle row are $x_1 = \frac{3n}{2} - 1$ and $x_2 = \frac{3n}{2} - 2$. Of importance is the fact that $x_1 - x_2 = 1$.
	
	Now we will construct an $SMA(5,n)$ as follows. The rows of array $A$ are placed in the first three rows of this array, with the exception that $x_1$ and $x_2$ are swapped; that is, the first two entries in the second row are $x_2, x_1$ instead of $x_1, x_2$. The bottom two rows, ignoring the left two columns, form a $2 \times (n-2)$ array; as $n-2$ is a multiple of 4, this array can be tiled with shifted copies of Figure \ref{2x4}, using absolute values from $\frac{3n}{2} + 3$ to $\frac{5n}{2}$ (note that this includes $(\frac{5n}{2}) - (\frac{3n}{2} + 3) + 1 = n - 2$ consecutive absolute values). This leaves four cells in the lower left, which may then be filled as follows:
	
	\begin{center}
		{\tabulinesep=2mm
			\begin{tabu}{| c | c |}
				\hline
				$-\frac{3n}{2}-1$ & $\frac{3n}{2}+1$ \\ \hline
				$\frac{3n}{2}+2$ & $-\frac{3n}{2}-2$ \\ \hline
			\end{tabu}
		}
	\end{center}

	Figure \ref{5x6} gives an example of this construction.

	Now we will prove that the resulting array is an $SMA(5,n)$. First, we note that we have in fact used every absolute value from 1 to $\frac{5n}{2}$ exactly once as a positive and once as a negative value. Now consider the sum of a given row. For row 1 and row 3, the sum is zero immediately from Lemma \ref{3xeven}. For row 2, the sum is zero because permuting the values in a row does not change their sum. For row 4 and row 5, one may observe that the left two columns cancel each other's values, and the rest of the row is filled with shifted copies of Figure \ref{2x4} guaranteed to sum to zero.
	
	Lastly, we consider the sums of the columns. For all but the first two columns, the sum of the first three rows will be zero from Lemma \ref{3xeven} and the fact that the last two values cancel each other in the corresponding copy of Figure \ref{2x4}. In the first column, the sum can be computed by calculating the sum of the differences from the known $3 \times n$ solution, as $x_2 - x_1 + (-\frac{3n}{2}-1) + (\frac{3n}{2}+2) = -1 - 1 + 2 = 0$. In the second column, we may similarly compute the sum to be $x_1 - x_2 + (\frac{3n}{2}+1) + (-\frac{3n}{2}-2) = 1 + 1 - 2 = 0$.
	This completes the proof.
\end{proof}

	\begin{figure}[ht]
		$$\begin{array}{| c | c | c | c | c | c |}
		\hline
		1 & -1 & 2 & -2 & 4 & -4 \\ \hline
		\mathbf{7} & \mathbf{8} & -8 & 5 & -7 & -5 \\ \hline
		-9 & -6 & 6 & -3 & 3 & 9 \\ \hline
		-10 & 10 & \cellcolor{gray!25}12 & \cellcolor{gray!25}-13 & \cellcolor{gray!25}-14 & \cellcolor{gray!25}15 \\ \hline
		11 & -11 & \cellcolor{gray!25}-12 & \cellcolor{gray!25}13 & \cellcolor{gray!25}14 & \cellcolor{gray!25}-15 \\ \hline
		\end{array}$$
		\caption{A tight $5 \times 6$ signed magic array using the method given in Lemma \ref{5xeven}.}
		\label{5x6}
	\end{figure}

\begin{lem}\label{oddxeven}
There exists an $SMA(m,n)$ for all odd $m > 1$ and even $n > 2$.
\end{lem}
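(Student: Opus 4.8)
The plan is to proceed by strong induction on the odd parameter $m$, adding four rows at a time and filling the new cells with shifted copies of the transpose of the $2\times 4$ array in Figure \ref{2x4}.

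For the base cases, observe that every odd $m>1$ is congruent to $1$ or $3\pmod 4$. Lemma \ref{3xeven} handles $m=3$ (it produces an $SMA(3,n)$ for every even $n$, in particular for every even $n>2$), and Lemma \ref{5xeven} handles $m=5$ (an $SMA(5,n)$ for every even $n>2$). These seed the two residue classes $3\pmod4$ and $1\pmod4$, respectively, which is why both lemmata are needed.

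For the inductive step, let $m\geq 7$ be odd and fix an even $n>2$. Then $m-4$ is odd and greater than $1$, so by the inductive hypothesis there is a tight $SMA(m-4,n)$, whose entries are exactly $\pm1,\ldots,\pm\frac{(m-4)n}{2}$ (the product $(m-4)n$ is even since $n$ is). Adjoin four new rows at the bottom to obtain an $m\times n$ frame containing an empty $4\times n$ block. Since $n$ is even, this block partitions into $n/2$ subblocks of size $4\times 2$. Into the $i$-th subblock ($i=0,1,\ldots,\frac{n}{2}-1$) place the transpose of the array in Figure \ref{2x4}, shifted so that its absolute values $1,2,3,4$ become $\frac{(m-4)n}{2}+4i+1,\ldots,\frac{(m-4)n}{2}+4i+4$. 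The transpose of Figure \ref{2x4} is a shiftable $SMA(4,2)$ with every row and every column summing to zero, and shifting preserves this, so each placed copy has zero row and column sums.

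It then remains to verify three things. First, the absolute values used over the $n/2$ subblocks are exactly $\frac{(m-4)n}{2}+1,\ldots,\frac{(m-4)n}{2}+2n=\frac{mn}{2}$, each once with each sign, so together with the $SMA(m-4,n)$ the whole array uses precisely $\pm1,\ldots,\pm\frac{mn}{2}$, each once. Second, every cell is filled. Third, the sums vanish: the first $m-4$ rows still sum to zero; each new row meets each subblock in two cells whose entries sum to zero, hence sums to zero; each column still sums to zero over its first $m-4$ entries, and its last four entries lie in a single subblock as one column of a shiftable copy, hence sum to zero. This makes the array a tight $SMA(m,n)$ and completes the induction. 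I do not expect a genuine obstacle here: the construction is a routine stacking argument once Lemmata \ref{3xeven} and \ref{5xeven} are available; the only points needing care are keeping both $m=3$ and $m=5$ as base cases (stepping by $4$ from a single odd base misses a residue class mod $4$) and checking that the $2n$ new absolute values fill the $n/2$ four-element blocks exactly, with none repeated or left over.
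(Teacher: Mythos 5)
Your proposal is correct and follows essentially the same route as the paper: base cases $m=3$ and $m=5$ from Lemmata \ref{3xeven} and \ref{5xeven}, then induction by adjoining four rows filled with a shiftable zero-sum block. The only cosmetic difference is that the paper drops in a shifted shiftable $SMA(4,n)$ from Lemma \ref{evenxeven}, whereas you tile the new $4\times n$ space directly with shifted $4\times 2$ transposes of Figure \ref{2x4} — which is exactly how that lemma builds its array anyway.
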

\begin{proof}
	For $m = 3,5$, we apply Lemma \ref{3xeven} and Lemma \ref{5xeven}, respectively.
	
	If $m > 5$, assume inductively that there exists an $SMA(m - 4, n)$. We then augment this array by adjoining four rows at the bottom, leaving an empty $4 \times n$ space. Place a shifted $SMA(4,n)$ from Lemma \ref{evenxeven} in the $4 \times n$ empty space at the bottom of the array. Then each column of the resulting $m \times n$ array sums to zero, as the first $m-4$ entries in the column sum to zero by assumption, and the last four entries sum to zero due to the shiftable array. Each row also clearly sums to zero. Therefore, there exists an $SMA(m, n)$. Hence, the statement is true by strong induction.
\end{proof}

We are now ready to state the main theorem of this section.
\begin{thm}
	\label{tight}
	An $SMA(m,n)$ exists precisely when $m = n = 1$, or when $m = 2$ and $n \equiv 0, 3 \pmod4$, or when $n = 2$ and $m \equiv 0, 3 \pmod4$, or when $m, n > 2$.
\end{thm}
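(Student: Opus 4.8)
The plan is to assemble Theorem \ref{tight} entirely from the lemmas already established in this section, using the elementary fact — noted in the proof of Lemma \ref{evenxeven} — that the transpose of an $SMA(m,n)$ is an $SMA(n,m)$. Thus it suffices to argue for $m \le n$ and invoke transposition for the mirror cases.

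First I would handle the existence (``if'') direction. For $m = n = 1$, the single-cell array containing $0$ is an $SMA(1,1)$, since here $X = \{0\}$. For $m = 2$ and $n \equiv 0,3 \pmod 4$, Lemma \ref{2xn} provides the array, and its transpose covers $n = 2$ with $m \equiv 0,3 \pmod 4$. Finally suppose $m, n > 2$. If $m$ and $n$ are both even, Lemma \ref{evenxeven} applies; if both odd, Lemma \ref{oddxodd} applies; and if exactly one of them is even, then after transposing if necessary so that $m$ is odd and $n$ is even, Lemma \ref{oddxeven} applies. In every case an $SMA(m,n)$ exists.

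Next I would rule out all remaining quadruples (the ``only if'' direction). Any pair $(m,n)$ with $m,n \ge 1$ falls into exactly one of three situations according to $\min(m,n)$. If $\min(m,n) = 1$, transpose if necessary so that $m = 1$; then Lemma \ref{1xn} forces $n = 1$, i.e.\ $m = n = 1$. If $\min(m,n) = 2$, transpose so that $m = 2$; then Lemma \ref{2xn} forces $n \equiv 0,3 \pmod 4$, and by transposition, when $n = 2$ one needs $m \equiv 0,3 \pmod 4$. If $\min(m,n) \ge 3$, then $m, n > 2$, which is already in the allowed list. Hence no $SMA(m,n)$ exists outside the stated families, so the characterization is complete.

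There is no substantial obstacle remaining at this point: all of the combinatorial difficulty resides in Lemmas \ref{2xn} through \ref{oddxeven}, so this theorem is essentially bookkeeping. The only places that require care are (i) applying transpose symmetry consistently, so that each ``small-dimension'' case and its mirror are both covered (in particular the asymmetric $m$-odd/$n$-even split in Lemma \ref{oddxeven}), and (ii) confirming that the parity trichotomy for $m,n>2$ is exhaustive — which it is, since ``one even and one odd'' is precisely the complement of ``both even'' together with ``both odd''.
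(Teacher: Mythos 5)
Your proposal is correct and matches the paper's argument: the paper likewise proves Theorem \ref{tight} by invoking transpose symmetry and dispatching each case of $(m,n)$ to Lemmas \ref{1xn}, \ref{2xn}, \ref{evenxeven}, \ref{oddxodd}, and \ref{oddxeven} (it records the case analysis in a table rather than prose). Your explicit check that the parity trichotomy is exhaustive and that the nonexistence cases are covered by Lemmas \ref{1xn} and \ref{2xn} is exactly the bookkeeping the paper leaves implicit.
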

\begin{proof}
	Note that the transpose of a signed magic array is a signed magic array, so the existence of an
 $SMA(m \times n)$ ensures that of an $SMA(n \times m)$.
 The result then follows immediately from the conjunction of several lemmata, as specified in
 Figure \ref{Tight array cases}.

	\begin{figure}
	\begin{center}
		{\tabulinesep=2mm
			\begin{tabu}{ c | c | c | c | c |}
				& $n = 1$ & $n = 2$ & $n > 2$ odd & $n > 2$ even \\ \hline
				$m = 1$ & Lemma \ref{1xn} & Lemma \ref{1xn} & Lemma \ref{1xn} & Lemma \ref{1xn} \\ \hline
				$m = 2$ & Lemma \ref{1xn} & Lemma \ref{2xn} & Lemma \ref{2xn} & Lemma \ref{2xn} \\ \hline
				$m > 2$ odd & Lemma \ref{1xn} & Lemma \ref{2xn} & Lemma \ref{oddxodd} & Lemma \ref{oddxeven} \\ \hline
				$m > 2$ even & Lemma \ref{1xn} & Lemma \ref{2xn} & Lemma \ref{oddxeven} & Lemma \ref{evenxeven} \\ \hline
				
			\end{tabu}
		}
	\end{center}
	\caption{The various cases of tight $SMAs$ and their corresponding lemmata.}
	\label{Tight array cases}
	\end{figure}

\end{proof}

\section{Signed magic squares}\label{SMS}

We now turn our attention to that of a signed magic square. As in the case of a tight array, we will split the problem into several cases, which will be handled independently.
Recall that we use the notation $SMS(n;t)$ for a signed magic square with $t$ filled cells in each row and
$t$ filled cells in each column. An $SMS(n;t)$ is called
$k$-{\em diagonal} if its entries all belong to $k$ consecutive diagonals. In the case where $k = t$, we abbreviate this to simply {\em diagonal}.

We first cover the trivial case where $t < 3$. The proof of the following result is straightforward.

\begin{thm}\label{12inn}
	There exists no $SMS(n;t)$ for $t < 3$ apart from the trivial $SMS(1;1)$ containing a single zero.
\end{thm}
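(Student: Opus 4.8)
The plan is to dispose of the small cases $t=1$ and $t=2$ separately, using simple parity and counting arguments analogous to those already employed in Lemma~\ref{2xn}. For $t=1$, each row has exactly one filled cell, so the single entry in that row must equal zero (the row sum is zero); but then every filled cell is zero, which is impossible unless the array contains only a single zero, forcing $n=1$. This handles the claim that the only $SMS(n;1)$ is the trivial $SMS(1;1)$.

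For $t=2$, I would argue as follows. Each row has two filled cells, and they must be negatives of each other (their sum is zero), so each row uses a pair $\{x,-x\}$; likewise each column uses a pair $\{y,-y\}$. The set of absolute values appearing is $\{1,2,\ldots,n\}$ when $2n/2 = n$ entries of each sign are needed, i.e.\ $ms = 2n$ so $X = \{\pm1,\ldots,\pm n\}$, and each absolute value $1,\ldots,n$ appears exactly once as a positive and once as a negative. The two cells of a given row lie in two distinct columns; reading off which column contains the positive copy of each value, one gets a structure on the underlying bipartite/graph of filled positions. The key observation is that the filled cells form a $2$-regular bipartite-type configuration (two per row, two per column), hence decompose into cycles, and along each such cycle the absolute values are forced by the zero-sum conditions to propagate in a way that creates a contradiction in parity or in the count of available symbols. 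Concretely, I expect the cleanest route is: within a cycle of the filled-cell incidence structure, the row and column constraints chain the entries together so that some absolute value would be forced to appear twice, or a sum of an odd number of odd contributions would have to vanish. This is essentially the same parity obstruction as in Lemma~\ref{2xn}, just read along cycles rather than along the two rows of a $2\times n$ array.

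The main obstacle is making the $t=2$ argument fully rigorous without hand-waving about the cycle structure: one must verify that two filled cells per row and per column genuinely forces a disjoint union of cycles covering all filled positions, and then carefully track how the $\pm$ signs and absolute values are constrained around each cycle. I would structure this as a short sub-lemma: if an $SMS(n;2)$ existed, consider the bipartite graph $G$ on rows and columns with an edge for each filled cell (so $G$ is $2$-regular, a disjoint union of even cycles), orient/label edges by absolute value, and derive that each cycle must use each of its absolute values consistently — and then count parities to reach a contradiction. Everything else (the $t=1$ case, and assembling the two pieces into the statement of Theorem~\ref{12inn}) is routine and follows the pattern already set by Lemma~\ref{2xn} and Lemma~\ref{1xn}.
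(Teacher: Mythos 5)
Your $t=1$ argument is fine. The problem is the $t=2$ case, which you have not actually proved: you describe a cycle decomposition of the filled cells and then say you ``expect'' that either a parity obstruction or a value-repetition obstruction will emerge, explicitly flagging that making this rigorous is the remaining obstacle. That is a genuine gap, and one of your two candidate obstructions would in fact fail. The parity argument of Lemma~\ref{2xn} relies on each row containing \emph{every} absolute value from $1$ to $n$, so that the row sum is $\sum_{i=1}^{n} i$ up to sign changes; in an $SMS(n;2)$ a row contains only two of the $n$ absolute values, so that computation does not transfer, and in any case a parity obstruction could only rule out certain residues of $n$ (as it does in Lemma~\ref{2xn}, where $n \equiv 0,3 \pmod 4$ survives). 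So ``the same parity obstruction read along cycles'' cannot be the whole story.

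The straightforward argument the paper has in mind needs no cycle structure at all. For $t=2$ we have $nt$ even, so $X=\{\pm 1,\dots,\pm n\}$ and $0\notin X$; in particular $x\neq -x$ for every entry $x$. Fix any filled cell containing $x$. Its row has exactly two filled cells summing to zero, so the other cell in that row contains $-x$; since every element of $X$ appears exactly once, this is \emph{the} unique cell containing $-x$, and it lies in the same row as $x$. The identical argument applied to the column of $x$ shows that the unique cell containing $-x$ also lies in the same column as $x$. A cell sharing both the row and the column of the cell containing $x$ is that cell itself, so $x=-x$, a contradiction. Hence no $SMS(n;2)$ exists for any $n\geq 2$. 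I recommend replacing your cycle-based sketch with this two-line argument; if you do want to salvage your approach, the contradiction you should commit to is the value-repetition one (the cells $(r_1,c_2)$ and $(r_2,c_1)$ adjacent to $x$ along the cycle would both be forced to contain $-x$), not parity.
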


Now, we will split into cases based on the parities of $t$ and $n$.

\subsection{Signed magic squares with $n, t$ both odd}
In this case, the proof is quite complex, so we begin with two lemmata.

\begin{lem}
	\label{oddinoddcols}
	Let $n \geq t \geq 3$ be odd integers. Then there exists a partition $\mathcal{A}$ of the set $S=[-\frac{nt-1}{2},\frac{nt-1}{2}]$ such that every set in the collection $\mathcal{A}$ contains exactly $t$ elements and sums to zero.
\end{lem}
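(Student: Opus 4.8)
The plan is to recognize that this partition is precisely what the columns of a tight signed magic array already give us, so the lemma is essentially a corollary of Section~\ref{TSMA}. Concretely, since $n \ge t \ge 3$, both $t$ and $n$ are odd and strictly greater than $1$, so by Lemma~\ref{oddxodd} (a case of Theorem~\ref{tight}) there is a tight $SMA(t,n)$, say $C = [c_{i,j}]$. Being tight, $C$ has no empty cells, so its $tn$ entries are exactly the $tn$ integers of $X = \{0\} \cup \{\pm 1, \dots, \pm \frac{tn-1}{2}\} = S$, each appearing once; being a signed magic array, each of its $n$ columns has exactly $t$ filled cells summing to zero. Hence the family $\mathcal{A} = \{A_1, \dots, A_n\}$ with $A_j = \{c_{1,j}, \dots, c_{t,j}\}$ the set of entries of column $j$ is a partition of $S$ into $n$ blocks, each of size $t$ and each of zero sum, which is exactly what is claimed.

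If instead one prefers an argument internal to this section (for example because the eventual construction of the full $SMS(n;t)$ needs a partition carrying additional structure), the natural route is induction on $t$ in steps of two, with base case $t = 3$. For the inductive step, given a partition of $[-\frac{nt-1}{2}, \frac{nt-1}{2}]$ into $n$ zero-sum $t$-sets, observe that $[-\frac{n(t+2)-1}{2}, \frac{n(t+2)-1}{2}]$ is obtained by adjoining the symmetric set $\{\pm(\frac{nt-1}{2}+1), \dots, \pm(\frac{nt-1}{2}+n)\}$ of $2n$ new integers; splitting this into the $n$ zero-sum pairs $\{x,-x\}$ and adding one such pair to each existing block raises every block size from $t$ to $t+2$ while keeping all block sums zero. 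This reduces the whole lemma to the single statement: for every odd $n \ge 3$, the set $[-\frac{3n-1}{2}, \frac{3n-1}{2}]$ partitions into $n$ zero-sum triples.

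The base case $t = 3$ is where the real work lies. After removing the block $\{-\frac{3n-1}{2}, 0, \frac{3n-1}{2}\}$ one must partition $\pm[1, \frac{3n-3}{2}]$ into zero-sum triples, and the naive "mirror-pair" approach (which forces a partition of $[1, \frac{3n-3}{2}]$ into triples of the form $\{a,b,a+b\}$) runs into a parity obstruction and only succeeds for $n$ in certain residue classes. I would therefore expect to dispatch $t = 3$ by exhibiting explicit infinite families of triples according to the residue of $n$ modulo $8$ (or modulo $4$), in the style of the formulas in Lemma~\ref{3xeven}, and then verifying in each case that the triples are pairwise disjoint, exhaust $S$, and sum to zero. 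Since the short argument in the first paragraph already proves the lemma as stated, the only substantial effort in a self-contained proof is precisely this case-by-case bookkeeping for the $t=3$ triples.
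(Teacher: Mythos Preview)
Your first argument via Lemma~\ref{oddxodd} is entirely correct as a proof of the lemma \emph{as stated}: the $n$ columns of a tight $SMA(t,n)$ are precisely $n$ zero-sum $t$-subsets partitioning $S$. Your sketched inductive alternative (adjoining one opposite pair $\{x,-x\}$ to each block at every step) is likewise valid for the bare statement.

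The paper, however, takes a much more laborious route, and for a reason you yourself anticipated. Its proof of Lemma~\ref{oddinoddcols} builds a \emph{specific} partition $\{C_c\}$ carrying rigid extra structure: $S$ is cut into $t$ ``shiftable blocks'' $D_1,\dots,D_t$ of $n$ consecutive integers each, arranged so that every $D_i$ meets every $C_c$ in exactly one element, with that element's position inside $D_i$ given by an explicit affine map $c \mapsto k_i c + b_i \pmod n$. The paper's inductive step is correspondingly more delicate than yours---it shifts existing blocks and inserts two new blocks according to prescribed index patterns, not symmetric pairs---precisely to preserve this orthogonality and these affine formulas. The very next result, Lemma~\ref{oddinoddrows}, opens with ``let $\{D_i\}$ and $\{C_c\}$ be defined as in the proof of Lemma~\ref{oddinoddcols}'' and then exploits the constants $k_i,b_i$ through a ``break'' analysis to construct the second, orthogonal zero-sum partition $\{R_r\}$ that Theorem~\ref{oddinodd} needs. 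Neither your tight-$SMA$ shortcut nor your pair-adjoining induction produces this controlled block structure, so while both prove Lemma~\ref{oddinoddcols} more cleanly, neither can be substituted into the paper's pipeline toward Theorem~\ref{oddinodd}. In short: you have found a shorter proof of the lemma in isolation, but the paper's longer proof is doing double duty.
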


\begin{proof}
	We will let $n$ be an arbitrary odd integer greater than or equal to $3$ and induct on $t$.
For the base case $t=3,$ we demonstrate the existence of a partition $\mathcal{A}$ by giving it an explicit construction. Let $\mathcal{A}=\lbrace C_{c} \rbrace,$ where
$$C_{c}=\left\lbrace (c-1)-\left(\frac{3n-1}{2}\right), -\left(\frac{n-1}{2}\right)+x_{c},-(c-1)+\left(\frac{3n-1}{2}\right)-x_{c}+\frac{n-1}{2} \right\rbrace,$$ $$c \in \left\lbrace 1,2,\ldots, n \right\rbrace$$
and $x_{c}$ is defined by  $x_{c} \equiv 2\left(\frac{3n-1}{2}-(c-1)\right)-1 \pmod n, x_{c} \in \lbrace 0,1,\ldots,n-1 \rbrace.$
	
	By construction, the sum of the elements in each set $C_{c}$ is zero. Therefore, it is sufficient to show that $\mathcal{A}$ is a partition of $S.$ First, we show that $|C_{c}|=3$. Suppose to the contrary that $|C_{c}|<3$. We have cases to consider.
	
	$\mathbf{Case \hspace{0.15 cm} 1}: (c-1)-\left(\frac{3n-1}{2}\right)=-\left(\frac{n-1}{2}\right)+x_{c}$. In this case, note that $(c-1)-\left(\frac{3n-1}{2}\right) \leq -\left(\frac{n-1}{2}\right)-1$, while $-\left(\frac{n-1}{2}\right)+x_{c} \geq -\left(\frac{n-1}{2}\right)$, showing that this case is impossible.
	
	$\mathbf{Case \hspace{0.15 cm} 2}: (c-1)-\left(\frac{3n-1}{2}\right)=-(c-1)+\left(\frac{3n-1}{2}\right)-x_{c}+\frac{n-1}{2}$. This equality implies $2\left(\frac{3n-1}{2}-(c-1)\right)=x_{c}-\left(\frac{n-1}{2}\right)$. Note $2\left(\frac{3n-1}{2}-(c-1)\right) \geq n+1$, while $x_{c}-\left(\frac{n-1}{2}\right) \leq \frac{n-1}{2}$, showing that this case is impossible.
	
	$\mathbf{Case \hspace{0.15 cm} 3}: -\left(\frac{n-1}{2}\right)+x_{c}=-(c-1)+\left(\frac{3n-1}{2}\right)-x_{c}+\frac{n-1}{2}$. This equality implies $2x_{c}=\left(\frac{3n-1}{2}\right)-(c-1)+(n-1)$. Note that when $c=n$, $x_{c}=0$, so $x_{c}<\left(\frac{3n-1}{2}\right)-(c-1)=\left(\frac{n+1}{2}\right)$ in this case. Note that as $c$ decreases in increments of 1, $x_{c}$ increases in increments of $2$ until $c=\frac{n-1}{2}$, at which point $x_{c}=1$. It is straightforward to verify that $x_{c}<\left(\frac{3n-1}{2}\right)-(c-1)$ for all $c>\frac{n-1}{2}$, and obviously $x_{c}<\left(\frac{3n-1}{2}\right)-(c-1)$ for all $c \leq\frac{n-1}{2}$ as well, since $\left(\frac{3n-1}{2}\right)-(c-1)>n$ in that case. Therefore, since $x_{c}<\left(\frac{3n-1}{2}\right)-(c-1)$ and $x_{c} \leq n-1$, it follows that the equality $2x_{c}=\left(\frac{3n-1}{2}\right)-(c-1)+(n-1)$ cannot hold, as required.
	
	It remains to show that if $j \neq k$, then $C_{j} \cap C_{k}=\emptyset$. So suppose by way of contradiction that there exist $C_{j},C_{k}$ with $j \neq k$ and $C_{j} \cap C_{k} \neq \emptyset$. We again have cases to consider.
	
	$\mathbf{Case \hspace{0.15 cm} 1'}: (j-1)-\left(\frac{3n-1}{2}\right)=(k-1)-\left(\frac{3n-1}{2}\right)$, then $j=k$, a contradiction.
	
	$\mathbf{Case \hspace{0.15 cm} 2'}: (j-1)-\left(\frac{3n-1}{2}\right)=-\left(\frac{n-1}{2}\right)+x_{k}$; see Case 1 above for the same argument.
	
	$\mathbf{Case \hspace{0.15 cm} 3'}: (j-1)-\left(\frac{3n-1}{2}\right)=-(k-1)+\left(\frac{3n-1}{2}\right)-x_{k}+\frac{n-1}{2}$; see Case 2 above for the same argument.
	
	$\mathbf{Case \hspace{0.15 cm} 4'}: -\left(\frac{n-1}{2}\right)+x_{j}=-\left(\frac{n-1}{2}\right)+x_{k}$. This implies $x_{j}=x_{k}$, which means that $ 2(\frac{3n-1}{2}-(j-1))-1 \equiv 2(\frac{3n-1}{2}-(k-1))-1 \pmod n$. This implies $2j \equiv 2k \pmod n$. Since $n$ is odd, this congruence holds if and only if $j \equiv k \pmod n$, which is impossible.
	
	$\mathbf{Case \hspace{0.15 cm} 5'}: -\left(\frac{n-1}{2}\right)+x_{j}=-(k-1)+\left(\frac{3n-1}{2}\right)-x_{k}+\frac{n-1}{2}$, refer to Case 3 in the previous part of the proof.
	
	$\mathbf{Case \hspace{0.15 cm} 6'}:
-(j-1)+\left(\frac{3n-1}{2}\right)-x_{j}+\frac{n-1}{2}=-(k-1)+\left(\frac{3n-1}{2}\right)-x_{k}+\frac{n-1}{2}$. This equality implies $-(j-1)-\left (2(\frac{3n-1}{2}-(j-1))-1 \right)+(k-1)+\left (2(\frac{3n-1}{2}-(k-1))-1 \right) \equiv 0 \pmod n$, or equivalently $j \equiv k \pmod n$, which is impossible.
We conclude that $\mathcal{A}$ is a partition of $S$, so in particular the base case for $t=3$ holds.
	
	We now partition the set $S=[-\frac{3n-1}{2},\, \frac{3n-1}{2}]$ into three blocks of $n$ consecutive integers. In this case, call the three blocks $D_{1},D_{2},D_{3}$, where $D_{1}= \left[ -\left(\frac{3n-1}{2}\right), -\left(\frac{n+1}{2}\right)\right] $ and $D_{2}$ and $D_{3}$ are defined in the obvious manner. We claim that the two partitions $\mathcal{A}$ and $\{D_1,D_2,D_3\}$ of the set $S$ are orthogonal.
Notice that the first $n$ consecutive integers in $S$ appear in the sets $C_1,C_2,\ldots,C_{n}$, respectively.
The next $n$ consecutive integers appear in the sets $C_{n},C_{\frac{n-1}{2}},\ldots,C_{\frac{n+1}{2}},$ respectively, where for each consecutive integer the index on $C_c$ is increased by $\frac{n-1}{2}$ and the result is taken modulo $n$ (here, the residues used are $\lbrace 1,2,\dots,n \rbrace$). To see why this is true, note that when $c=n$, $x_c=0$, and $-\left(\frac{n-1}{2}\right)+x_c=-\left(\frac{n-1}{2}\right)$.  Moreover, when the index on $C_c$ is increased by $\frac{n-1}{2}$ $\pmod n$, a quick computation reveals that $x_c$ is increased by one, so $-\left(\frac{n-1}{2}\right)+x_c$ is increased by one. Since $\frac{n-1}{2}$ is coprime to $n$, all the sets in the partition will have exactly one representative element in this block of $n$ integers. Finally, the last $n$ consecutive integers appear in the set $C_{\frac{n+1}{2}}, C_{\frac{n+1}{2}+1}, \ldots, C_{\frac{n-1}{2}},$ respectively, where for each consecutive integer the index on $C_c$ is increased by $1$ and the result is taken modulo $n$. To see why this is true, note that when $c=\frac{n+1}{2}$, $-(c-1)+\left(\frac{3n-1}{2}\right)-x_{c}+\frac{n-1}{2}=\frac{n+1}{2}$. Moreover, when $c$ increases by one modulo $n$, $x_c$ decreases by $2$ modulo $n$, so  $-(c-1)+\left(\frac{3n-1}{2}\right)-x_{c}+\frac{n-1}{2}$ increases by $1$. The only break occurs when going from $c=n$ and $x_c=0$ to $c=1$ and $x_c=n-2$, in which case the quantity $-(c-1)+\left(\frac{3n-1}{2}\right)-x_{c}+\frac{n-1}{2}$ still increases by 1.
Hence, each block of $n$ consecutive numbers in the set $S=[-\frac{3n-1}{2},\, \frac{3n-1}{2}]$
contains a representative from every set in $\mathcal{A}$.

   We will refer to these blocks $D_1,D_2,D_3$ as ``shiftable'' for the following reason. Suppose we add
integers $\alpha_1,\alpha_2$ and $\alpha_3$ to all of the elements in the sets $D_1$, $D_2$ and $D_3$, respectively. Further, let $\alpha_1,\alpha_2,\alpha_3$ be chosen such that the shifted blocks $D_1',D_2',D_3'$ are disjoint. This shift naturally induces new disjoint sets $C'_1,\ldots, C'_{n}$, where $C'_c=\left\lbrace (c-1)-\left(\frac{3n-1}{2}\right)+\alpha_1, -\left(\frac{n-1}{2}\right)+x_{c}+\alpha_2,-(c-1)+\left(\frac{3n-1}{2}\right)-x_{c}+\frac{n-1}{2}+\alpha_3 \right\rbrace$. Then it is clear that $\sum_{x \in C'_c}x=\alpha_1+\alpha_2+\alpha_3$, and this constant is independent of $c$.
	
	For the inductive step, suppose that there exists a partition of $\mathcal{A}$ of the set
$S=[-\frac{nt-1}{2},\, \frac{nt-1}{2}]$
into sets $C_{1},\ldots,C_{n}$, where sets of $n$ consecutive integers form shiftable blocks $D_{1},\ldots, D_{t}$. We wish to show that there exists a partition $\mathcal{A}'$ of the set
$S'=[-\frac{n(t+2)-1}{2},\, \frac{n(t+2)-1}{2}]$ into sets $C'_{1},\ldots,C'_{n}$ where sets of $n$ consecutive integers form shiftable blocks $D'_{1},\ldots, D'_{t+2}$. To begin with, shift the blocks $D_{1},\ldots,D_{t-1}$ by subtracting $n$ from each of the elements. Then, shift the block $D_t$ by adding $n$ to each of the elements. These shifts induce new disjoint sets $C'_{1},\ldots,C'_{n}$, each of which have $t$ elements. By construction, the elements in each of the $C'_{c}$ sum to $-n(t-2)$. We wish to "complete" the partition by adding two elements from the set $B=\{(n(t-4)+1)/2,(n(t-4)+1)/2+1,\ldots, (nt-1)/2\}$ to each $C'_{c}$. Place the first $n$ consecutive integers in $B$ into $C'_{\frac{n+1}{2}},C'_{1},\ldots,C'_{n}$, respectively, and the next $n$ consecutive integers in $B$ into $C'_{n},C'_{\frac{n-1}{2}},\ldots,C'_{\frac{n+1}{2}}$, respectively. Note that these patterns are mirror images of each other, and that while in the first case the index on the $C_c$ increases by $\frac{n+1}{2}$ for each consecutive integer, in the second case it increases by $\frac{n-1}{2}$, both of course taken modulo $n$ using residues $\lbrace 1,\dots,n \rbrace$. It is routine to verify that this method ensures that the quantity $n(t-2)$  is added to each of the $C_c'$s. By construction, the elements in each $C'_{c}$ will sum to 0, and we have constructed a partition $\mathcal{A}'$ of the set $S'=[-\frac{n(t+2)-1}{2},\, \frac{n(t+2)-1}{2}]$
 into sets $C'_{1},\ldots,C'_{n}$ of cardinality $t+2$ that all sum to 0. Note that this method also guarantees that we end up with a partition with the same property of shiftable blocks. In particular, it is easy to see in general this partitioning method involves placing the first $n$ consecutive integers in the order $C_1,C_2 \ldots C_n$, placing the next $n$ in the order $C_{n},C_{\frac{n-1}{2}},\ldots,C_{\frac{n+1}{2}}$, and placing the following $n$ in reverse order. This pattern of skipping by $\frac{n-1}{2}$ and then skipping by $\frac{n+1}{2}$ continues to alternate until the last $n$ consecutive integers, which are placed $C_{\frac{n+1}{2}},C_{\frac{n+1}{2}+1},\ldots, C_{\frac{n-1}{2}}$ where for each consecutive integer the index on $C_c$ is increased by $1$ and the result is taken modulo $n$.
\end{proof}

\begin{lem}\label{oddinoddrows}
	Let $n \geq t \geq 3$ be odd integers, and let $\{D_i\}$ and $\{C_c\}$ be defined as in the proof of Lemma \ref{oddinoddcols}. Then there exists another partition $\{R_r\}$ of the set $S=[-\frac{nt-1}{2},\frac{nt-1}{2}]$ orthogonal to $\{C_c\}$ such that each $R_r$ has $t$ entries and sums to zero.
\end{lem}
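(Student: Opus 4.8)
My plan is to build $\{R_r\}$ so that it, too, respects the block decomposition of Lemma~\ref{oddinoddcols}: each $R_r$ will contain exactly one element of each of the $t$ blocks $D_1,\dots,D_t$. Write $D_i=[\ell_i,\ell_i+n-1]$, and recall from that proof that the $j$-th smallest element of $D_i$ lies in $C_{\sigma_i(j)}$ for an explicit permutation $\sigma_i$ of $[1,n]$, with $\sigma_1$ the identity and every other $\sigma_i$ an affine map modulo $n$ whose multiplier is $1$, $\tfrac{n-1}{2}$ or $\tfrac{n+1}{2}$. Specifying $R_r$ then amounts to choosing, for each $i$, a permutation $\rho_i$ of $[1,n]$, the element of $D_i$ placed in $R_r$ being $\ell_i+\rho_i(r)-1$; that $\{R_r\}$ is automatically a partition of $S$ into $n$ sets of size $t$ is immediate.

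Under this setup the two substantive demands become conditions on the $\rho_i$. From $\sum_{x\in D_i}x=n\ell_i+\binom{n}{2}$ and the fact that the blocks tile the zero-sum interval $[-\tfrac{nt-1}{2},\tfrac{nt-1}{2}]$ one gets $\sum_i\ell_i=-\tfrac{t(n-1)}{2}$, whence the sum of $R_r$ equals $\sum_i\rho_i(r)-\tfrac{t(n+1)}{2}$; so ``every $R_r$ sums to zero'' is equivalent to the $t\times n$ array $[\rho_i(r)]$ having every column sum equal to $\tfrac{t(n+1)}{2}$. And since each $C_c$ meets each $D_i$ in exactly one element, $|R_r\cap C_c|$ is the number of $i$ with $\sigma_i(\rho_i(r))=c$; hence orthogonality of $\{R_r\}$ to $\{C_c\}$ (that is, $|R_r\cap C_c|\le1$ for all $r,c$) is equivalent to the permutations $\psi_i:=\sigma_i\circ\rho_i$ pairwise disagreeing in every coordinate.

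To secure orthogonality I would take $\psi_i$ to be the cyclic shift $r\mapsto r+2(i-1)\pmod n$; since $\gcd(2,n)=1$ and $t\le n$ these shifts are pairwise distinct, so they pairwise disagree everywhere. This forces $\rho_i=\sigma_i^{-1}\circ\psi_i$, an affine permutation of $[1,n]$ whose multiplier is the inverse mod $n$ of that of $\sigma_i$, i.e.\ $1$, $2$ or $-2$; a short count with the explicit $\sigma_i$ shows these (integer) multipliers sum to exactly $0$. Consequently $\sum_i\rho_i(r)$ equals a fixed constant plus $n$ times the total number of ``wrap-arounds'' incurred when the affine formulas for the $\rho_i(r)$ are reduced into $[1,n]$, and the whole lemma reduces to showing that this wrap-around count does not depend on $r$ — equivalently, that a certain fixed signed sum of indicator functions of cyclic intervals (of lengths $\tfrac{n-1}{2}$, $\tfrac{n+1}{2}$ and $2(i-1)$) is constant. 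The small case $n=3$, where the multipliers $1$ and $\tfrac{n-1}{2}$ coincide and the shift choice must be altered, I would dispatch by hand.

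The real obstacle is precisely this reconciliation of the zero-sum requirement with orthogonality. A mechanical imitation of the $t\to t+2$ induction used for $\{C_c\}$ in Lemma~\ref{oddinoddcols} fails: there the two elements appended to each part are forced to be a pair with a prescribed sum, and for $\{C_c\}$ these are exactly the ``mirror'' pairs the construction uses, so an $R_r$ built the same way would share two elements with some $C_c$. That is why I route the argument through the shift permutations $\psi_i$ and collapse it to a counting identity about intervals; carrying that identity through — cleanly, and with the boundary case $n=3$ — is the same flavor of modular index-chasing that fills the proof of Lemma~\ref{oddinoddcols}, and I expect it to be the bulk of the work.
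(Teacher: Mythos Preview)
Your framework is sound and essentially identical to the paper's: you correctly recast the problem as choosing, for each block $D_i$, a permutation $\rho_i$ of $[1,n]$, with orthogonality to $\{C_c\}$ forced by taking the compositions $\psi_i=\sigma_i\circ\rho_i$ to be pairwise-everywhere-disagreeing cyclic shifts, and with the zero-sum condition reduced (via $\sum_i k'_i=0$) to the wrap-around count in $\sum_i\rho_i(r)$ being independent of $r$. The paper does exactly this.

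The gap is in your specific choice of shift. Taking $\psi_i(r)\equiv r+2(i-1)\pmod n$ does secure orthogonality, but it does \emph{not} make the wrap-around count constant. Already for $n=7$, $t=3$ one computes $(\rho_1+\rho_2+\rho_3)(r)=5,12,12,12,12,19,12$ for $r=1,\dots,7$, against the required constant $\tfrac{t(n+1)}{2}=12$; the case $n=7$, $t=5$ likewise fails. So the signed sum of indicator functions you were deferring is in fact not constant for this shift.

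The paper instead takes $\psi_i(r)\equiv r+\tfrac{n-1}{2}(i-1)$, and the increment $\tfrac{n-1}{2}$ is not arbitrary. From Lemma~\ref{oddinoddcols} each $j_{i,c}$ has its one or two wrap-around jumps located at $c\equiv 0$ and $c\equiv\tfrac{n-1}{2}$. Shifting by $\tfrac{n-1}{2}(i-1)$ makes the jump of block $i$ at $c\equiv\tfrac{n-1}{2}$ land at the same $r$-value as the jump of block $i-1$ at $c\equiv 0$; these two have opposite signs and cancel. The paper formalises this as: for each $r$ there is a unique $z\in[1,t-1]$ with $r\equiv -\tfrac{n-1}{2}(z-1)$, and then $\beta_{z,r}+\beta_{z+1,r}=0$ while all other $\beta_{i,r}$ vanish. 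Your increment $2$ destroys this consecutive-block alignment (except when $\tfrac{n-1}{2}\equiv 2$, i.e.\ $n=5$), which is why the identity fails. The fix is simply to replace $2(i-1)$ by $\tfrac{n-1}{2}(i-1)$; after that your outline goes through and coincides with the paper's argument, and no separate hand treatment of $n=3$ is needed.
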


\begin{proof}
	Let $c_{i, j}$ be such that $C_{c_{i, j}}$ contains the $j$th entry, in ascending order, within $D_i$. Then thus far, we have partitioned $C_c$ such that that there exist $k_i$ and $b_i$ such that $c_{i, j} \equiv k_i j + b_i \pmod n$. Also, $k_i = 1$ if $i = 1 \text{ or } t$, $k_i = \frac{n-1}{2}$ if $i$ is even, and $k_i = \frac{n+1}{2}$ otherwise. Lastly, $b_i = n$ if $i \neq t$ is odd, $b_i = \frac{n+1}{2}$ if $i$ is even, and $b_t = \frac{n-1}{2}$.
	
	Let $j_{i, c}$ be such that $c_{i, j_{i, c}} = c$. Then, solving the equation $c \equiv k_i j_{i, c} + b_i \pmod n$ gives us $k_i^{-1} c - k_i^{-1} b_i \equiv j_{i, c} \pmod n$. We may then determine values for the coefficient and constant in this congruence via modular algebra:
	
	$$
	k_i^{-1} \equiv k'_i = \begin{cases}
	1 & \text{if } i = 1 \text{ or } t \\
	-2 & \text{if } i \text{ is even} \\
	2 & \text{otherwise.}
	\end{cases}
	$$
	
	$$
	- k_i^{-1} b_i \equiv b'_i = \begin{cases}
	0 & \text{if } i \neq t \text{ is odd} \\
	1 & \text{if } i \text{ is even} \\
	\frac{n+1}{2} & \text{if } i = t.
	\end{cases}
	$$
	
	For convenience, we will let $j_{i, c + n} = j_{i, c}$ for all $c$ and introduce some definitions.
Given $t, n, i$, a {\em break} is a value of $c$ in $[1, n]$ such that $j_{i, c + 1} - j_{i, c} \neq k'_i$ (note that as determined above, $j_{i, c + 1} - j_{i, c} \equiv k'_i \pmod n$). A break is {\em positive} if $j_{i, c + 1} - j_{i, c} > k'_i$ and {\em negative} otherwise. The {\em magnitude} of a break is $|j_{i, c + 1} - j_{i, c} - k'_i|$, and the {\em signed magnitude} is $j_{i, c + 1} - j_{i, c} - k'_i$. For example,
let $t = 5, n = 7, i = 2$. Then $j_{i, c}$ for $c \in [1, 7]$ are the values $6, 4, 2, 7, 5, 3, 1$. (These are congruent modulo 7 to the values $-2c + 1$.) There are two breaks: one at $c = 3$ where $j$ goes from 2 to 7, and one at $c = 7$ where $j$ goes from 1 to 6. Both are positive breaks, as $7 - 2 = 6 - 1 > -2$.

We next determine where exactly the breaks occur given $t, n, i$, and what their signs and magnitudes are.
	
	If $i = 1$, then $j_{1, c} \equiv c \pmod n$. As $j_{1, c} \in [1, n]$ it follows that $j_{1, c} = c$ for $c \in [1, n]$. This sequence has one negative break of magnitude $n$ at $c = n$, where $j_{1, c + 1} - j_{1, c} = 1 - n = k'_1 - n$.
	
	If $i$ is even, then $j_{i, c} \equiv -2c + 1 \pmod n$. This sequence has two positive breaks of magnitude $n$. One occurs at $c = n$, where $j_{i, c + 1} - j_{i, c} = (n - 1) - 1 = n - 2 = k'_i + n$. The other occurs at $c = \frac{n-1}{2}$, where $j_{i, c + 1} - j_{i, c} = n - 2 = k'_i + n$.
	
	If $i = t$, then $j_{t, c} \equiv c + \frac{n+1}{2} \pmod n$. This sequence has one negative break of magnitude $n$ at $c = \frac{n-1}{2}$, where $j_{t, c + 1} - j_{t, c} = 1 - n = k'_t - n$.
	
	If $i \neq 1$, $i \neq t$, and $i$ is odd, then $j_{i, c} \equiv 2c \pmod n$. This sequence has two negative breaks of magnitude $n$. One occurs at $c = n$, where $j_{i, c + 1} - j_{i, c} = 2 - n = k'_t - n$. The other occurs at $c = \frac{n-1}{2}$, where $j_{i, c + 1} - j_{i, c} = 1 - (n - 1) = 2 - n = k'_t - n$.
	
	Now we  define  $R_r$. For $r \in [1, n]$, we define $R_r$ as follows: for $i \in [1, t]$, $R_r$ contains the single value in both $D_i$ and in $C_c$ where $c \equiv r + \frac{n-1}{2}(i - 1) \pmod n$. For clarity, we define the function $s$ as $s(i) = \frac{n-1}{2}(i - 1)$ and use $s(i)$ from now on.
	
	Next we  define  an analogous symbol $j'$ to $j$: $j'_{i, r}$ is the value in $[1, n]$ such that $R_r$ contains the $j'_{i, r}$th entry, in ascending order, within $D_i$. Note that this value must also be in $C_c$ such that $j_{i, c} = j'_{i, r}$. It follows that $j'_{i, r} = j_{i, r + s(i)}$.
As before, for convenience, we will let $j'_{i, r + n} = j'_{i, r}$ for all $r$.
	
	We may define breaks in $j'$ in the same manner as for breaks in $j$. Let $\beta_{i, r}$ be the signed magnitude of the break that occurs between $j'_{i, r}$ and $j'_{i, r + 1}$ if one exists and 0 otherwise. In other words, $\beta_{i, r} = j'_{i, r + 1} - j'_{i, r} - k'_i$. We again divide into cases based on $i$.
	
	If $i = 1$, then $j'_{1, r} = j_{1, c}$ because $s(1) = 0$. So $\beta_{1, r} = -n$ if $r = n$ and 0 otherwise. Note that we can alter this solution using the modular congruence modulo $n$ to be $r = 0 = s(1) = s(i)$.
	
	If $i$ is even, then $\beta_{i, r} = n$ when $r + s(i) = n$ or $r + s(i) = \frac{n-1}{2}$. Solving for $r$, we obtain $r = n - s(i)$ or $r = \frac{n-1}{2} - s(i)$. Simplifying and using the congruence modulo $n$, we have $r = -s(i)$ and $r = -s(i - 1)$.
	
	If $i = t$, then $\beta_{i, r} = -n$ when $r + s(i) = \frac{n-1}{2}$. As before, this is equivalent to $r = -s(i - 1)$.
	
	If $i \neq 1$, $i \neq t$, and $i$ is odd, then $\beta_{i, r} = -n$ when $r + s(i) = n$ or $r + s(i) = \frac{n-1}{2}$. As before, this is equivalent to $r = -s(i)$ or $r = -s(i - 1)$.
	
	Let $r$ be chosen arbitrarily, and let $z \in [1, t-1]$ be such that $r \equiv -s(z)$. Then $\beta_{z, r}$ and $\beta_{z+1, r}$ are nonzero and are each other's opposites. Also, this matching covers and partitions all nonzero values of $\beta_{i, r}$. Therefore, $\sum_{i=1}^t \beta_{i, r} = 0$.
Note also that $\sum_{i=1}^t k'_i = -1 + 2 + -2 + 2 + -2 + \dots + 2 + -1 = 0$.
	
	Now let $\delta_i$ be the offset of $D_i$ such that the $j$th element of $D_i$ in ascending order is $\delta_i + j$. Then it follows by definition that $\sum R_r = \sum_{i=1}^t (\delta_i + j'_{i, r})$. We will consider the difference between two consecutive $R_r$:
$$\begin{array}{lcl}	
	\sum R_{r+1} - \sum R_r
	&=& \sum_{i=1}^t (\delta_i + j'_{i, r + 1}) - \sum_{i=1}^t (\delta_i + j'_{i, r})\\
	&=& \sum_{i=1}^t (\delta_i + j'_{i, r + 1} - \delta_i - j'_{i, r})
	= \sum_{i=1}^t (j'_{i, r + 1} - j'_{i, r})\\
	&=& \sum_{i=1}^t (k'_i + \beta_{i, r})
	= \sum_{i=1}^t k'_i + \sum_{i=1}^t \beta_{i, r} = 0.
\end{array}$$
	
	So any $R_r$ has the same sum as $R_{r+1}$; by induction, all $R_r$ have the same sum. As the sum of these sums is $\sum_{i=-\frac{tn-1}{2}}^{\frac{tn-1}{2}} i = 0$, each sum individually is also zero.
\end{proof}

\begin{thm}\label{oddinodd}
	Let $n \geq t \geq 3$ be odd integers. Then there exists an $SMS(n;t)$.
\end{thm}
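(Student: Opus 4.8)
The plan is to glue together the two partitions supplied by Lemmas \ref{oddinoddcols} and \ref{oddinoddrows} into a single $n\times n$ array. Since $nt$ is odd, the entry set prescribed for an $SMS(n;t)$ is exactly $X=\{0,\pm1,\ldots,\pm(nt-1)/2\}=S=[-\frac{nt-1}{2},\frac{nt-1}{2}]$, so it suffices to produce an $n\times n$ array with $t$ filled cells in each row and each column, using each element of $S$ exactly once, in which every row sum and every column sum is zero.

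First I would fix the placement rule. Retain the partition $\{C_c\}_{c=1}^n$ of $S$ into zero-sum $t$-subsets from Lemma \ref{oddinoddcols}, together with its shiftable blocks $D_1,\ldots,D_t$, and the partition $\{R_r\}_{r=1}^n$ from Lemma \ref{oddinoddrows}, recalling that by construction $R_r$ consists, for each $i\in[1,t]$, of the unique element lying in $D_i\cap C_c$ with $c\equiv r+\tfrac{n-1}{2}(i-1)\pmod n$. Each $x\in S$ lies in a unique $C_{c(x)}$ and a unique $R_{r(x)}$; define the array $M$ by placing $x$ in the cell in row $r(x)$ and column $c(x)$, leaving every other cell empty. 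By construction each element occupies exactly one cell, and column $c$ receives only elements of $C_c$ while row $r$ receives only elements of $R_r$.

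The crux is to verify that $M$ is well-defined, i.e. that no cell is asked to hold two distinct elements. Suppose $x,y$ are both sent to the cell in row $r$, column $c$, so $x,y\in C_c\cap R_r$; say $x\in D_i$ and $y\in D_{i'}$. From the description of $R_r$, the unique element of $D_i\cap R_r$ lies in $C_{c'}$ with $c'\equiv r+\tfrac{n-1}{2}(i-1)\pmod n$, and since $x\in C_c$ this forces $c\equiv r+\tfrac{n-1}{2}(i-1)\pmod n$; likewise $c\equiv r+\tfrac{n-1}{2}(i'-1)\pmod n$. Subtracting gives $\tfrac{n-1}{2}(i-i')\equiv 0\pmod n$, and since $n$ is odd, $\tfrac{n-1}{2}$ is a unit modulo $n$ (its inverse is $-2$), so $i\equiv i'\pmod n$; as $i,i'\in[1,t]$ with $t\le n$, we get $i=i'$, whence both $x$ and $y$ equal the unique element of $D_i\cap C_c$, so $x=y$. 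This is the step I expect to be the main obstacle, since the rest is bookkeeping; but it is short once the modular structure of the $R_r$'s is in hand, and the same argument shows each cell in row $r$ column $c$ is occupied precisely when $c-r\equiv\tfrac{n-1}{2}(i-1)\pmod n$ for some $i\in[1,t]$.

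Finally I would read off the required properties. Fix a column $c$: the elements appearing in column $c$ are $\bigcup_{r=1}^n (C_c\cap R_r)=C_c$ because $\{R_r\}$ partitions $S$, and since each intersection $C_c\cap R_r$ has at most one element, the $t$ elements of $C_c$ occupy $t$ distinct rows; hence column $c$ has exactly $t$ filled cells, and by Lemma \ref{oddinoddcols} they sum to zero. Symmetrically, row $r$ contains exactly the set $R_r$, hence has exactly $t$ filled cells, lying in $t$ distinct columns, and these sum to zero by Lemma \ref{oddinoddrows}. Every element of $S$ occupies exactly one cell by construction, so $M$ is an $SMS(n;t)$, completing the proof.
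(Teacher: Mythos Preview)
Your proof is correct and follows essentially the same approach as the paper: both build the array by placing each element $x\in S$ in cell $(r,c)$ determined by $x\in R_r\cap C_c$, using the partitions from Lemmas~\ref{oddinoddcols} and~\ref{oddinoddrows}. Your explicit verification that $|R_r\cap C_c|\le 1$ via the invertibility of $\tfrac{n-1}{2}$ modulo $n$ is a detail the paper subsumes under the word ``orthogonal'' in the statement of Lemma~\ref{oddinoddrows}, so you have simply made that step explicit rather than done anything different.
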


\begin{proof}
	Let $\{C_c\}$ be defined as in Lemma \ref{oddinoddcols} and $\{R_r\}$ as in Lemma \ref{oddinoddrows}. Let $B=[b_{r,c}]$, where $b_{r, c}$ is the single element in $R_r$ and $C_c$ if they have nonempty intersection and is left blank if they do not. Then every row and every column of $B$ sums to zero, and $B$ has exactly $t$ entries in each of its $n$ rows and $n$ columns. Thus $B$ is the desired array (see Figure \ref{5in7}).
\end{proof}

	\begin{figure}[ht]
		$$\begin{array}{| c | c | c | c | c | c | c |}
		\hline
		\cellcolor{gray!25}-17 & -16 & -15 & -14 & -13 & -12 & -11 \\ \hline
		-5 & -7 & -9 & \cellcolor{gray!25}-4 & -6 & -8 & -10 \\ \hline
		-2 & 0 & 2 & -3 & -1 & 1 & \cellcolor{gray!25}3 \\ \hline
		9 & 7 & \cellcolor{gray!25}5 & 10 & 8 & 6 & 4 \\ \hline
		15 & 16 & 17 & 11 & 12 & \cellcolor{gray!25}13 & 14 \\ \hline
		\end{array}$$
		
		$$\begin{array}{| c | c | c | c | c | c | c |}
		\hline
		-17 & & 5 & -4 & & 13 & 3 \\ \hline
		-2 & -16 & & 10 & -6 & & 14 \\ \hline
		15 & 0 & -15 & & 8 & -8 & \\ \hline
		& 16 & 2 & -14 & & 6 & -10 \\ \hline
		-5 & & 17 & -3 & -13 & & 4 \\ \hline
		9 & -7 & & 11 & -1 & -12 & \\ \hline
		& 7 & -9 & & 12 & 1 & -11 \\ \hline
		\end{array}$$
		
		\caption{Top: an array whose rows are $\{D_i\}$ and whose columns are $\{C_c\}$, for $n = 7, t = 5$, highlighting the elements of $R_1$. Bottom: the $SMS(7,5)$ given by Theorem \ref{oddinodd}.}
		\label{5in7}
	\end{figure}

\subsection{Signed magic squares with $n$ odd, $t$ even}

For most of the remaining square cases, we will need the following two lemmata.

\begin{lem}
	\label{4inndiag}
	For all positive integers $n \geq 4$, there exists a shiftable diagonal $SMS(n;4)$.
\end{lem}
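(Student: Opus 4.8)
The plan is to construct a shiftable diagonal $SMS(n;4)$ by hand for the smallest residue classes of $n$ modulo some convenient modulus, and then bootstrap to all larger $n$ by a shifting/padding argument, exactly in the spirit of Lemmata \ref{2xn}, \ref{evenxeven}, and \ref{3xeven}. Since we want all entries to lie on $4$ consecutive (broken) diagonals and the array to be shiftable (equally many positive and negative entries in each row and each column), the natural move is to fill the four occupied diagonals — say diagonals $0,1,2,3$ in the cyclic sense, where diagonal $d$ consists of the cells $(i, i+d \bmod n)$ — with a pattern of $\pm$ values. In each row $i$ the four filled cells are $(i,i),(i,i+1),(i,i+2),(i,i+3)$, and in each column $j$ the four filled cells are $(j,j),(j-1,j),(j-2,j),(j-3,j)$; so a row and a column each see one entry from each of the four diagonals. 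Thus if we place values $d_0(i), d_1(i), d_2(i), d_3(i)$ on the four diagonals as functions of the row index, the row-$i$ sum is $d_0(i)+d_1(i)+d_2(i)+d_3(i)$ and the column-$j$ sum is $d_0(j)+d_1(j-1)+d_2(j-2)+d_3(j-3)$, and both must vanish.

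First I would handle small base cases explicitly: give shiftable diagonal $SMS(n;4)$ for $n = 4, 5, 6, 7$ as displayed arrays (these can be found by a short search, patterning the $\pm$ signs so that each row and column has two plus and two minus signs, e.g. a sign pattern of the form $+,-,-,+$ on the four diagonals in every row, which automatically makes the array shiftable and also makes every column balanced in sign). For the base cases one just needs to assign the absolute values $1,\dots,2n$ to the $2n$ cells bearing a $+$ (and their negatives to the matching cells) so that all eight linear constraints — which by the diagonal structure collapse to far fewer independent constraints — are met; with the sign pattern $+,-,-,+$ the row sum is $|a_0(i)| - |a_1(i)| - |a_2(i)| + |a_3(i)|$, so it suffices to pair equal absolute values appropriately along rows and then check the (cyclically shifted) column condition, which a small construction resolves.

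Second, for the inductive step I would show that from a shiftable diagonal $SMS(n;4)$ one obtains a shiftable diagonal $SMS(n+4;4)$, or perhaps more cleanly $SMS(n+1;4)$, by inserting a new row and a new column. The shiftability is exactly what makes this work: if we add a $k\times k$ block or a band and fill the newly created diagonal cells with a shifted copy of a small shiftable diagonal gadget (shifting its absolute values up past $2n$), the old row/column sums are unchanged because a shiftable piece contributes zero, and the new rows/columns get zero sums from the gadget. The delicate point is keeping everything on $4$ consecutive broken diagonals after the size changes — enlarging $n$ moves the cells around cyclically — so I would likely phrase the induction as $n \to n+4$ and verify that a clean "$4$ consecutive diagonals" structure is preserved, using the $n=4,5,6,7$ arrays as the four base cases so that every $n \ge 4$ is reached.

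The main obstacle I expect is not the shifting bootstrap (that is routine once the right gadget is identified) but producing the base-case arrays with all three properties simultaneously: diagonal, shiftable, and using exactly the integer multiset $\{\pm 1,\dots,\pm 2n\}$ with the correct row/column sums. The diagonal constraint is rigid — each diagonal's entries are forced into a cyclic relationship — so the absolute values cannot be assigned freely, and for odd versus even $n$ the cyclic shift by $1,2,3$ behaves differently (parity of $n$ affects which diagonals a column hits), which is why I anticipate needing separate small constructions for, say, $n$ even and $n$ odd, or equivalently the four residues mod $4$. Once those explicit arrays are in hand and verified, the rest of the proof is a short inductive paragraph mirroring Lemma \ref{evenxeven}.
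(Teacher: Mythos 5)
Your reduction of the row and column conditions to four cyclic ``diagonal sequences'' is the right way to view the problem, and the base cases you ask for do exist (for $n=4$ the shiftable array of Figure \ref{4x4,4x6} is already diagonal, and small searches handle $n=5,6,7$). The genuine gap is the inductive step $n\to n+4$ (or $n\to n+1$), which is not the routine bootstrap of Lemma \ref{evenxeven}. A cell count shows why: passing from an $n\times n$ to an $(n+4)\times(n+4)$ diagonal array with $t=4$ adds only $4(n+4)-4n=16$ filled cells, yet the four new rows and four new columns together contain $16+16-10=22$ filled cells of the new array (they overlap in only the $4+3+2+1=10$ cells of the corner block lying on the four diagonals). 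So at least six cells in the new rows or columns must receive \emph{old} entries, which therefore leave their original rows or columns and disturb sums that were previously zero. Equivalently, in the sequence picture the column-$j$ sum is $a_j+b_{j-1}+c_{j-2}+d_{j-3}$, a sliding window of width four; splicing four new terms into each cyclic sequence creates three straddling columns whose sums mix old and new terms and do not vanish for free. A shiftable gadget contributes zero only to rows and columns it occupies entirely, and at the seam it cannot, so a nontrivial local repair is required (compare the explicit entry swaps needed to fix an analogous seam in Lemma \ref{6in2mod4diag}). You flag this as ``the delicate point'' but do not resolve it, and it is the whole difficulty.

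The paper's own practice is telling: for every one of its diagonal constructions (Lemmas \ref{3in0mod4diag}, \ref{5in0mod4diag}, \ref{5in2mod4diag}, \ref{7in2mod4diag}) it avoids induction on $n$ and instead writes down closed-form sequences valid for all $n$ in the relevant residue class; its induction (Lemma \ref{add4tot}) is on the number of filled diagonals $t$, not on $n$. That is exactly what it does here: a single explicit piecewise formula for $a_{i,j}$, uniform in $n\ge 4$, with special values in the wrap-around corner, followed by a direct check that the entries are precisely $\pm1,\dots,\pm 2n$, that four consecutive diagonals are filled, and that every row and column sums to zero. To complete your argument you would need either to exhibit the seam repair explicitly or to replace the induction by such a uniform formula.
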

	
\begin{proof}
	Let $n$ be a positive integer. Define array $A=[a_{i,j}]$ by:
	$$a_{i,j}= \begin{cases}
	i & i=j,\;1\leq j<n \\
	-n & i=j=n \\
	-i & i=j-1,\; 2\leq j <n-1 \\
	n+1 & i=n-1,\; j=n \\
	2n-i & i=j-2,\; 3 \leq j \leq n\\
	-(2n-i) & i=j-3,\; 4\leq j \leq n\\
	-(n+2) & i=n-2,\; j=1\\
	-(n-1) & i=n-1,\; j=1\\
	2n & i=n,\; j=1\\
	-(n+1) & i=n-1,\; j=2\\
	n & i=n,\; j=2\\
	-2n & i=n,\; j=3
	\end{cases}$$
	and the cells are left empty otherwise. Figure \ref{4in8diag} provides an example of such an array for $n=8$.
	
	It is straightforward to verify that $A$ contains all the entries in the set $[-2n, 2n]$ and that each of the entries appears exactly one time in the array. Moreover, by construction this array has the property that four adjacent diagonals are filled. It remains to check that rows and columns sum to 0.

	For the rows, if $1 \leq i \leq n-3$, the sum of the entries in row $i$ is $i-i+(2n-i)-(2n-i)=0$, as desired. If $i=n-2$, the sum is $-(n+2)+(n-2)-(n-2)+(2n-(n-2))=0$. If $i=n-1$, the sum is $-(n-1)+(n-1)-(n-1)+(n-1)=0$. Finally, the sum of the entries in the last row must be $0$, because the sum of all the entries in the array is $0$.

	For the columns, if $j=1$ then the sum is $1-(n+2)-(n-1)+2n=0$. If $j=2$ the sum is $-1+2-(n+1)+n=0$. If $j=3$, the sum is $(2n-1)-2+3-2n=0$. If $4 \leq j \leq n-1$, the sum is $j-(j-1)+(2n-(j-2))-(2n-(j-3))=0$, as required. Finally, the sum of the entries in the last column must be $0$, because the sum of all the entries in the array is $0$. This completes the proof.
\end{proof}

	\begin{figure}[ht]
		$$\begin{array}{|c|c|c|c|c|c|c|c|}
		\hline
		1 & -1 & 15 & -15 & & & & \\
		\hline
		& 2 & -2 & 14 & -14 & & & \\
		\hline
		& & 3 & -3 & 13 & -13 & & \\
		\hline
		& & &  4 & -4 & 12 & -12 & \\
		\hline
		& & & & 5 & -5 & 11 & -11 \\
		\hline
		-10 & & & & & 6 & -6 & 10 \\
		\hline
		-7 & -9 & & & & & 7 & 9 \\
		\hline
		16 & 8 &-16 & & & & & -8 \\
		\hline
		\end{array}$$
		\caption{The shiftable diagonal $SMS(8;4)$ given by Lemma \ref{4inndiag}.}
		\label{4in8diag}
	\end{figure}

\begin{lem} \label{add4tot}
	Assume that there exists a $k$-diagonal $SMS(n;t)$ with $k \leq n - 4$ and either $t$ or $n$ is even. Then there exists a $(k+4)$-diagonal $SMS(n;t+4)$.
\end{lem}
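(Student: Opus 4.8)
The plan is to build the required array by overlaying a shifted copy of the shiftable diagonal $SMS(n;4)$ of Lemma~\ref{4inndiag} onto four previously-unused diagonals of the given $k$-diagonal $SMS(n;t)$. Since by hypothesis at least one of $n,t$ is even, $nt$ is even, so the given array uses precisely the absolute values $1,2,\dots,nt/2$, while an $SMS(n;t+4)$ must use precisely $1,2,\dots,n(t+4)/2$. The four new diagonals should therefore carry exactly the $2n$ absolute values $nt/2+1,\dots,nt/2+2n$, and $nt/2+2n=n(t+4)/2$; this is precisely the range one obtains by shifting an $SMS(n;4)$, whose natural entry set $\{\pm1,\dots,\pm2n\}$ has $2n$ absolute values.

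First I would put both ingredients in standard position. Cyclically permuting the columns of an $n\times n$ array by $s$ sends the broken diagonal $\{(i,j):j-i\equiv d\pmod n\}$ to the broken diagonal $\{(i,j):j-i\equiv d+s\pmod n\}$; it replaces each row by a cyclic shift of that row and each column by another column of the original array, hence it carries a signed magic square to a signed magic square, preserves shiftability, and preserves the property of being $k$-diagonal. Writing $D_d=\{(i,j):j-i\equiv d\pmod n\}$, I may thus assume the given $SMS(n;t)$ has all its entries in $D_0,\dots,D_{k-1}$; and, applying a suitable cyclic column shift to the array of Lemma~\ref{4inndiag}, I obtain a shiftable diagonal $SMS(n;4)$, call it $M$, all of whose entries lie in $D_k,D_{k+1},D_{k+2},D_{k+3}$. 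Here the hypothesis $k\le n-4$ does exactly the work needed: it gives $k+3\le n-1$, so $D_k,\dots,D_{k+3}$ are disjoint from $D_0,\dots,D_{k-1}$ and the union $D_0,\dots,D_{k+3}$ is a block of $k+4$ consecutive broken diagonals.

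Next I would shift $M$: add $nt/2$ to each positive entry of $M$ and subtract $nt/2$ from each negative entry, obtaining an array $M'$. Because $M$ is shiftable, every row and every column of $M$ has two positive and two negative entries, so every row sum and column sum of $M'$ equals the corresponding sum of $M$, namely $0$; and the entries of $M'$ are now exactly $\pm(nt/2+1),\dots,\pm(n(t+4)/2)$, each once. Finally I would form an $n\times n$ array $B$ by placing the given $SMS(n;t)$ on $D_0,\dots,D_{k-1}$ and $M'$ on $D_k,\dots,D_{k+3}$; this is unambiguous because those two diagonal blocks are disjoint. Then: $B$ has $t$ filled cells from the given array together with the $4$ filled cells of $M'$ in each row, hence $t+4$ per row, and similarly $t+4$ per column; every row sum and column sum of $B$ is $0+0=0$; the entries of $B$ are $\{\pm1,\dots,\pm nt/2\}\cup\{\pm(nt/2+1),\dots,\pm(n(t+4)/2)\}=\{\pm1,\dots,\pm n(t+4)/2\}$, each used once; and all entries of $B$ lie in $D_0,\dots,D_{k+3}$. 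Hence $B$ is a $(k+4)$-diagonal $SMS(n;t+4)$.

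The only delicate point is the diagonal bookkeeping of the second paragraph: one must verify that being $k$-diagonal is preserved under cyclic column shifts, and observe that $k\le n-4$ is exactly the amount of room required to append the four new diagonals to the old block so that the union is again an interval of consecutive broken diagonals. Everything after that — the shift leaving all row and column sums equal to $0$, the count of filled cells, and the identification of the entry set — is routine. (Since any $k$-diagonal signed magic square with at least one entry has $k\ge1$, the hypothesis $k\le n-4$ forces $n\ge5$, so Lemma~\ref{4inndiag} does apply.)
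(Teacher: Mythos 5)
Your proposal is correct and follows essentially the same route as the paper: take the shiftable diagonal $SMS(n;4)$ of Lemma \ref{4inndiag}, permute its columns so that its four diagonals land in four consecutive empty diagonals adjacent to those of the given array, shift its absolute values up to $[\tfrac{tn}{2}+1,\tfrac{(t+4)n}{2}]$ using shiftability to preserve the zero row and column sums, and superimpose. Your explicit bookkeeping about cyclic column permutations and the role of $k\le n-4$ just spells out details the paper leaves implicit.
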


\begin{proof}
	Let $A$ be a $k$-diagonal $SMS(n;t)$ with $k \leq n - 4$ and either $t$ or $n$ even. Note that the entries in $A$ can be partitioned into $n$ diagonals, and that as $k \leq n - 4$, at least four consecutive diagonals are empty, and we may choose these four consecutive diagonals to be adjacent to the $k$ diagonals in which $A$'s entries are contained.
	
	Let $B$ be the shiftable diagonal $SMS(n;4)$ given by Lemma \ref{4inndiag}. Let $B'$ be a copy of $B$ with the entries shifted to have absolute values in $[\frac{tn}{2} + 1, \frac{(t+4)n}{2}]$ rather than $[1, 2n]$ and with the columns permuted to place the four diagonals of $B$ into the same cells as the four empty diagonals of $A$.
	
	Then $A$ and $B'$ do not share any filled cells, and together their entries occupy $k+4$ consecutive diagonals; each array has zero row and column sums; and the two arrays together use each number in $[-\frac{(t+4)n}{2}, -1]$ and $[1, \frac{(t+4)n}{2}]$ exactly once. By combining the two arrays into one, $A + B'$, we achieve the desired signed magic square.
\end{proof}

Now, we move on to actually considering the case where $t$ is even and $n$ is odd. We will prove this by an induction, one of whose base cases is complex enough to warrant another lemma.

\begin{lem}\label{6inodddiag}
	Let $n > 6$ be odd. Then there exists a shiftable diagonal $SMS(n;6)$.
\end{lem}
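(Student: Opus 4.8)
The plan is to construct a shiftable diagonal $SMS(n;6)$ directly by an explicit formula, in the same spirit as the diagonal $SMS(n;4)$ of Lemma~\ref{4inndiag}. I want an $n\times n$ array whose filled cells lie in exactly six consecutive (broken) diagonals, which uses each of $\pm1,\pm2,\ldots,\pm3n$ exactly once, which has every row and every column summing to zero, and which is shiftable — meaning each row and each column contains exactly three positive and three negative entries. Shiftability is what makes this lemma useful downstream (it lets Lemma~\ref{add4tot} and the eventual induction paste shifted copies together), so it must be baked into the construction, not checked as an afterthought.

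First I would set up coordinates so that the six occupied diagonals are $j-i\equiv 0,1,2,3,4,5\pmod n$, and think of the array as six ``diagonal vectors'' $d^{(0)},\dots,d^{(5)}$, where $d^{(\ell)}_i=a_{i,\,i+\ell}$ (indices of the second coordinate mod $n$). The column-sum condition becomes: for each fixed column index, the six entries sitting in that column (one from each diagonal, at shifted row positions) sum to zero; the row-sum condition is that $\sum_{\ell=0}^5 d^{(\ell)}_i=0$ for each $i$. A clean way to force both is to pair the diagonals $(0,1)$, $(2,3)$, $(4,5)$ and make diagonal $1$ a ``shifted negative'' of diagonal $0$, etc. — e.g.\ arrange that $d^{(1)}_i=-d^{(0)}_i$ along most of the array and similarly for the other two pairs, exactly as in the $SMS(n;4)$ construction where we see blocks like $i,-i$ and $2n-i,-(2n-i)$ on adjacent diagonals. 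That automatically gives zero row sums and zero column sums on the ``generic'' part, and also gives three $+$'s and three $-$'s per row. The remaining work is to repair the boundary: near the bottom rows and leftmost columns the naive periodic pattern spills outside the array or reuses values, so a bounded number of cells (independent of $n$, since only $O(1)$ diagonals-meet-edge interactions occur) must be overwritten by an ad hoc patch, chosen to still use each absolute value once, preserve all six row/column sums, and preserve the $3$-positive/$3$-negative count in the affected rows and columns. I would present the formula as a large case split on $(i,j)$, give a worked example for a small odd $n$ (say $n=9$ or $n=11$) as a figure, and then verify rows, columns, the value set, and shiftability by the same routine diagonal-by-diagonal bookkeeping used in Lemma~\ref{4inndiag}.

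An alternative, possibly cleaner, route: build the $SMS(n;6)$ from a shiftable diagonal $SMS(n;4)$ (Lemma~\ref{4inndiag}, absolute values $[1,2n]$) together with a shiftable diagonal $SMS(n;2)$ placed on two further adjacent diagonals using absolute values $[2n+1,3n]$. A shiftable diagonal $SMS(n;2)$ on two adjacent broken diagonals is trivial — put $k$ on the main diagonal and $-k$ on the next diagonal, cyclically, for $k=2n+1,\dots,3n$; that has zero row sums, zero column sums, one $+$ and one $-$ per row/column, and occupies two consecutive diagonals. Superimposing it on the four diagonals adjacent to those used by the $SMS(n;4)$ (cyclically there is always room when $n\ge 6$, and we need $n>6$ odd so that six diagonals fit with the edge patches of Lemma~\ref{4inndiag} still valid) yields six consecutive occupied diagonals, each absolute value in $[1,3n]$ used once, all row and column sums zero, and three $+$'s / three $-$'s per line. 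This is essentially a special case of the paste-a-shifted-$SMS(n;4)$ idea but with a $2$-diagonal block instead, and I would likely prefer it because it reduces the boundary analysis to that already done in Lemma~\ref{4inndiag}.

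The main obstacle either way is the boundary patch and the diagonal-placement bookkeeping: one must be sure that the four diagonals used by the $SMS(n;4)$ block and the two used by the $SMS(n;2)$ block are genuinely consecutive (as broken diagonals) and disjoint, and that after cyclic reindexing the $SMS(n;4)$'s special cells near rows $n,n-1,n-2$ and columns $1,2,3$ don't collide with the $SMS(n;2)$'s cells — for which the hypothesis $n>6$ (odd) is exactly what is needed. Checking shiftability is the other place to be careful: the $SMS(n;4)$ of Lemma~\ref{4inndiag} is already asserted shiftable, and the $2$-diagonal block contributes one $+$ and one $-$ to every row and column, so the sum has three of each; I would state this explicitly. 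Once the placement is pinned down, the verification of row sums, column sums, and the exact value set is routine, following the template of Lemma~\ref{4inndiag}'s proof.
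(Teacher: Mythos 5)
Your preferred route (the second one) fails at its first step: the ``trivial'' shiftable diagonal $SMS(n;2)$ you want to superimpose does not exist. If a block occupies two adjacent broken diagonals with one positive and one negative entry per row and per column, then zero row sums force the two entries of row $i$ to be $x_i$ and $-x_i$, and zero column sums force $x_{j}-x_{j-1}=0$ for every $j$ (the column picks up $x$ from one diagonal and the $-x$ of the \emph{previous} row from the other), so all the $x_i$ are equal --- contradicting that they must be the distinct values $2n+1,\dots,3n$. This is just the $t=2$ nonexistence already recorded in Theorem \ref{12inn}, and no placement or boundary patch rescues it. Your first route has the right skeleton --- pair adjacent diagonals so that each row consists of three $\pm$ pairs, which gives zero row sums and shiftability for free --- but it misdiagnoses the remaining obstruction as a boundary/patching issue. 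In fact, with $d^{(1)}_i=-d^{(0)}_i$, $d^{(3)}_i=-d^{(2)}_i$, $d^{(5)}_i=-d^{(4)}_i$, \emph{every} column sum equals the telescoping expression $\bigl(d^{(0)}_j-d^{(0)}_{j-1}\bigr)+\bigl(d^{(2)}_{j-2}-d^{(2)}_{j-3}\bigr)+\bigl(d^{(4)}_{j-4}-d^{(4)}_{j-5}\bigr)$, which is generically nonzero; the whole difficulty is to choose three positive sequences covering $[1,3n]$ so that $d^{(0)}_j+d^{(2)}_{j-2}+d^{(4)}_{j-4}$ is independent of $j$. You never produce such sequences, so the construction is not actually given.

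That missing ingredient is exactly what the paper supplies, and it is worth seeing how cleanly: take a tight $SMA(3,n)$ (Lemma \ref{oddxodd}, since $3$ and $n$ are both odd and $>1$), add $\frac{3n+1}{2}$ to every entry to get a $3\times n$ array $A'$ filled with $[1,3n]$ whose column sums are all equal, and then lay the three rows of $A'$ down as the three positive diagonals (staggered via $i'\equiv 2i+j-2 \pmod n$), with each entry's negative in the cell immediately to its right. The column of the resulting square then contains $a'_{1,j}+a'_{2,j}+a'_{3,j}$ minus $a'_{1,j-1}+a'_{2,j-1}+a'_{3,j-1}$, which vanishes precisely because the column sums of $A'$ are constant --- there is no boundary patch at all, and each column has three positive and three negative entries, so shiftability is automatic. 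If you want to salvage your write-up, replace the $SMS(n;4)+SMS(n;2)$ decomposition with this ``column-magic $3\times n$ array drives the telescoping'' idea; without it, the proposal does not constitute a proof.
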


\begin{proof}
	From Lemma \ref{oddxodd}, there exists an $SMS(3,n)$, say $A$, using entries in $[-\frac{3n-1}{2}, \frac{3n-1}{2}]$. Let $a'_{i, j} = a_{i, j} + \frac{3n+1}{2}$. Then $A'$ uses each number in $[1, 3n]$ exactly once, and the sum of every column of $A'$ is the same.
	
	We define an array $B=[b_{i,j}]$ as follows. For $i \in [1, 3]$ and $j \in [1, n]$, let $i', j' \in [1, n]$ with $i' \equiv 2i - 1 + j - 1 \pmod n$ and $j' = j$. Then $b_{i', j'} = a'_{i, j}$, and $b_{i', j' + 1} = -a'_{i, j}$ (using the convention that $b_{i', n + 1} = b_{i', 1}$). All other cells in $B$ are left empty.
	
	We will now determine the possible values $i'$ and $j'$ can take for $b_{i', j'}$ to be filled. We must have $i' \equiv 2i + j - 2 \pmod n$ and $j' = j + s$ for $i \in [1, 3]$, $j \in [1, n]$, and $s \in \{0, 1\}$.
	
	Note that these equations imply $i' - j' \equiv 2i + j - 2 - j - s \equiv 2i - s - 2 \pmod n$; in fact, this is equivalent to the above conditions, because $j$ can range from $1$ to $n$. Therefore $i' - j'$ must be one of the values $\{2i - s - 2\} = \{-1, 0, 1, 2, 3, 4\}$. Thus, $B$ consists of six consecutive diagonals.
	
	Observe that in row $i'$ of $B$, as we have defined $B$, the nonempty cells partition naturally into pairs of opposite entries; so the row sums to zero.
	
	Now in column $j'$, there are six entries. These six entries are $a_{1, j'}$, $a_{2, j'}$, $a_{3, j'}$, $-a_{1, j' - 1}$, $-a_{2, j' - 1}$, and $-a_{3, j' - 1}$ (using the convention that $a_{i, 0} = a_{i, n}$). One may verify using the congruences given above that these six numbers are indeed placed in column $j'$. Then the sum of column $j'$ is the sum of these six entries. As stated earlier, $a_{1, j'} + a_{2, j'} + a_{3, j'} = a_{1, j' - 1} + a_{2, j' - 1} + a_{3, j' - 1}$, so the column sums to zero. Thus array $B$ has the desired properties (see Figure \ref{6in7diag}).
\end{proof}
	
	\begin{figure}[ht]
		$$\begin{array}{| c | c | c | c | c | c | c |}
		\hline
		1& 20 & 9 & 14 & 5 & 10 & 18 \\ \hline
		19 & 11 & 3 & 15 & 16 & 6 & 7 \\ \hline
		13 & 2 & 21 & 4 & 12 & 17 & 8 \\ \hline\hline
		1 & -1 & & 4 & -4 & 6 & -6 \\ \hline
		-7 & 20 & -20 & & 12 & -12 & 7 \\ \hline
		19 & -19 & 9 & -9 & & 17 & -17 \\ \hline
		-8 & 11 & -11 & 14 & -14 & & 8 \\ \hline
		13 & -13 & 3 & -3 & 5 & -5 & \\ \hline
		& 2 & -2 & 15 & -15 & 10 & -10 \\ \hline
		-18 & & 21 & -21 & 16 & -16 & 18 \\ \hline
		\end{array}$$
		
		\caption{Top: an $SMS(3,n)$, shifted to use values from $[1, 21]$. Bottom: the corresponding
          $SMS(7;6)$ given by Lemma \ref{6inodddiag}.}
		\label{6in7diag}
	\end{figure}

Now, we give the full induction argument.

\begin{thm}
	\label{eveninodd}
	Given $n > t > 3$ with $n$ odd and $t$ even, there exists a shiftable diagonal $SMS(n;t)$.
\end{thm}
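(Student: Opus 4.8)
The plan is to induct on $t$ in steps of $4$, bottoming out at the two families already constructed. Since $t$ is even with $t > 3$ we have $t \in \{4, 6, 8, \dots\}$, and since $n$ is odd with $n > t$ we have $n \geq t+1$. When $t = 4$, Lemma \ref{4inndiag} directly supplies a shiftable diagonal $SMS(n;4)$ (it does so for every $n \geq 4$, in particular for every odd $n > 4$). When $t = 6$, Lemma \ref{6inodddiag} directly supplies a shiftable diagonal $SMS(n;6)$ for every odd $n > 6$. Between them, these two base cases cover both residue classes of $t$ modulo $4$.

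For the inductive step I would fix an even $t \geq 8$ and an odd $n > t$, and invoke the inductive hypothesis to obtain a shiftable diagonal $SMS(n; t-4)$; this is legitimate because $t - 4$ is even, $t - 4 > 3$, and $n > t > t-4$. A diagonal $SMS(n;t-4)$ is $(t-4)$-diagonal, and since $t < n$ we get $t - 4 \leq n - 4$; together with the fact that $t-4$ is even, this places us exactly within the hypotheses of Lemma \ref{add4tot}, which then yields a $t$-diagonal (hence diagonal) $SMS(n;t)$.

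The one thing Lemma \ref{add4tot} does not explicitly record is shiftability, so I would add a short remark to close that gap. In the construction there, the output array is $A + B'$, where $A$ is the shiftable diagonal $SMS(n; t-4)$ from the inductive hypothesis and $B'$ is a shifted, column-permuted copy of the shiftable diagonal $SMS(n;4)$ of Lemma \ref{4inndiag}, the two arrays occupying disjoint cells. In any row of $A+B'$, the entries inherited from $A$ are equally split between positive and negative (since $A$ is shiftable), and likewise for those inherited from $B'$, so that row of $A+B'$ is balanced; the same holds column by column. Hence $A+B'$ is shiftable, and the induction goes through.

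I expect the substance of the argument to be exactly this assembly of Lemmas \ref{4inndiag}, \ref{6inodddiag}, and \ref{add4tot}; the only points requiring care are the two bookkeeping checks that accompany each use of Lemma \ref{add4tot} — namely $t-4 \leq n-4$ (which is just $t < n$) and the preservation of shiftability — together with matching the base cases $t = 4$ and $t = 6$ to the correct parity of $t$ in the induction. There is no genuinely hard step.
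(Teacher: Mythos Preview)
Your proof is correct and follows the paper's approach exactly: induction on $t$ in steps of $4$, with base cases $t=4$ (Lemma~\ref{4inndiag}) and $t=6$ (Lemma~\ref{6inodddiag}), and the inductive step supplied by Lemma~\ref{add4tot}. Your explicit check that shiftability survives the construction in Lemma~\ref{add4tot} is a detail the paper's own proof leaves implicit, so if anything you are more careful here.
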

\begin{proof}
	We prove this by induction with two base cases, the cases where $t = 4$ and $t = 6$, which are given by Lemma \ref{4inndiag} and by Lemma \ref{6inodddiag}, respectively.
	
	Now assume that such arrays exist for all even $t' < t$ greater than $3$, in particular for $t' = t - 4$. Then we may apply Lemma \ref{add4tot} to construct a diagonal $SMS(n;t)$. Hence, the statement is true by induction.
\end{proof}

\subsection{Signed magic squares with $n,t$ both even}

We begin with the case $t$ or $n$ is a multiple of $4$.

\begin{lem} \label{tnmult8}
	Let $t$ and $n$ be even positive integers, with $t \leq n$ and either $t$ or $n$ divisible by $4$. Then there exists an $SMS(n;t)$.
\end{lem}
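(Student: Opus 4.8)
The plan is to reduce the general case to the previously established machinery by a combination of direct constructions and the ``add four diagonals'' lemma. Note first that since $t,n$ are both even and one of them is a multiple of $4$, the product $tn$ is divisible by $8$; in particular the entry set $X=\{\pm1,\dots,\pm tn/2\}$ has $tn/2$ positive elements, an even number. I would split into two principal cases according to whether $4\mid n$ or $4\mid t$ (these overlap, which is fine).

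\emph{Case $4\mid n$.} Here I would build the square one block of four columns at a time. Partition the $n$ columns into $n/4$ groups of four consecutive columns. In each group place a shifted copy of a shiftable $SMA(t,4)$ — such an array exists because $t$ is even (for $t=2$ it is Figure \ref{2x4}, for $t\ge4$ even it is Lemma \ref{evenxeven}, since the transpose of a shiftable $SMA(4,t)$ is a shiftable $SMA(t,4)$, and $4\le n$ forces $t$ to be at most $n$ but we only need $t$ even here). Stack these $t\times 4$ blocks so that the $t$ filled rows of the $k$-th block occupy rows $1,\dots,t$ but are \emph{cyclically shifted down} by an appropriate offset as $k$ varies, guaranteeing that every one of the $n$ rows ends up with exactly $t$ filled cells; using offsets that are multiples of $t/\gcd(\dots)$ — or more simply, since there are $n/4$ blocks and $n$ rows with $n/4\mid n$, shifting block $k$ down by $(k-1)t$ (mod $n$) works when $t\cdot(n/4)\equiv 0$, i.e. always, because $4\mid n$. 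Each block's columns already sum to zero, and each row is a union of complete blocks (each contributing zero), so all row and column sums vanish; assigning the absolute values $[(k-1)\cdot 2t+1,\ k\cdot 2t]$... wait, $[1,2t]$ shifted appropriately, to the $k$-th block exhausts $X$ exactly once.

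\emph{Case $4\mid t$ (and $n$ merely even).} Here I would instead induct using Lemma \ref{add4tot}. The base case is $t=4$: a shiftable diagonal $SMS(n;4)$ exists for every $n\ge4$ by Lemma \ref{4inndiag}. Since $t$ is a multiple of $4$ we may climb $t=4,8,12,\dots$ by repeatedly invoking Lemma \ref{add4tot} (whose hypothesis ``$t$ or $n$ even'' is satisfied as $n$ is even), each step requiring $k\le n-4$ where $k$ is the current diagonal-width; since we start $4$-diagonal and add $4$ per step, after reaching width $t$ we need $t\le n$, which holds by assumption, so every intermediate step has $k\le t-4\le n-4$. This produces a $t$-diagonal, hence ordinary, $SMS(n;t)$.

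The main obstacle I anticipate is the bookkeeping in Case $4\mid n$: one must verify simultaneously that (i) the cyclic row-offsets really do give each of the $n$ rows exactly $t$ filled cells — this is a counting argument that $n/4$ blocks of height $t$, offset so their footprints tile $\mathbb{Z}/n$ evenly, cover each residue equally often, which needs $t\cdot(n/4)\equiv 0\pmod n$, i.e. exactly the divisibility $4\mid n$ — and (ii) the absolute values used across blocks partition $\{1,\dots,tn/2\}$ without gaps or repeats. Both are routine once set up correctly, but the off-by-one and modular-arithmetic details are where an error would hide; I would present a small explicit example (say $n=8$, $t=2$ or $t=4$) to anchor the construction before stating it in general. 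There is also the degenerate sub-case $t=n$ (a tight square), already covered by Theorem \ref{tight}, which can be cited directly rather than re-derived.
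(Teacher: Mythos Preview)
Your Case $4\mid t$ is fine: Lemma \ref{4inndiag} gives a diagonal $SMS(n;4)$ and iterated applications of Lemma \ref{add4tot} climb to any multiple of $4$ up to $n$.  The problem is your Case $4\mid n$.  Each of your $t\times 4$ blocks contributes exactly $4$ entries to every row it touches, so for a row of the final array to contain $t$ entries it must be hit by exactly $t/4$ blocks.  When $t\equiv 2\pmod 4$ (e.g.\ $n=8$, $t=6$) the number $t/4$ is not an integer, so \emph{no} choice of cyclic offsets can make the row coverage uniform; the approach simply cannot produce an $SMS(n;t)$ in this sub-case.  Your divisibility check is also miscomputed: $t\cdot(n/4)\equiv 0\pmod n$ is the statement $n\mid tn/4$, which is equivalent to $4\mid t$, not to $4\mid n$.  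So the very case your first argument is supposed to cover on its own --- $4\mid n$ but $4\nmid t$ --- is exactly where it breaks.

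For comparison, the paper handles all cases at once with a completely different idea: since $nt/2\equiv 0\pmod 4$, Theorem \ref{tightHeffter} supplies a tight integer Heffter array $A$ of shape $\frac{n}{2}\times t$.  The rows of $A$ together with their negatives give a partition $\{R_r\}$ of $X$ into $n$ zero-sum $t$-sets, and a second partition $\{C_c\}$ is built by pairing each entry of $A$ with its negative and reading off ``diagonals'' of the left and right halves of $A$ separately.  Orthogonality of the two partitions is then a short modular argument, and the $SMS(n;t)$ is the incidence array of $\{R_r\}$ against $\{C_c\}$.  If you want to salvage a block-tiling approach you would need a different building block for the $n\equiv0$, $t\equiv2\pmod 4$ case (for instance $t\times 8$ blocks, which would give $t/8$ hits per row --- still not integral), so the Heffter-array route is genuinely doing work that your decomposition cannot.
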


\begin{proof}
	By the assumption $\frac{nt}{2}$ is a multiple of four. By Theorem \ref{tightHeffter}, this implies that there exists a tight integer $\frac{n}{2} \times t$ Heffter array, say $A=[a_{i,j}]$. Let $A_i$ denote the row $i$ of $A$.
	We will create two orthogonal partitions, $\{R_r\}$ and $\{C_c\}$, of the set $X = [-\frac{nt}{2}, -1] \cup [1, \frac{nt}{2}]$. Note that due to the definition of Heffter array, $X$ is precisely the set of entries in $A$ with their opposites.
Therefore we will let $R_{2i - 1} = A_i$, and let $R_{2i}$ contain the opposites of $A_i$, for $i \in [1, \frac{n}{2}]$. It is apparent that $E_1=\{R_r\mid r \mbox{ is odd}\}$ partition the set of entries in $A$ while
 $E_2=\{R_r\mid r \mbox { is even}\}$ partition the set of their opposites; therefore $E_1\cup E_2$ partition $X$. In addition, the sum of the entries in a given $R_r$ will either be the sum of a row of $A$ (zero, by definition), or its opposite (also zero). Lastly, we note that $|R_r| = t$, the cardinality of a row of $A$.
	
	Now we  define  $C_c$. For $c \in [1, \frac{n}{2}]$ and $j \in [1, \frac{t}{2}]$, let $i \in [1, \frac{n}{2}]$ be such that $i - j + 1 \equiv c \pmod {\frac{n}{2}}$. Then $-a_{i, j},a_{i,j} \in C_c$. For $c \in [\frac{n}{2} + 1, n]$ and $j \in [\frac{t}{2} + 1, t]$, let $i \in [1, \frac{n}{2}]$ be such that $i - j + 1 + \frac{t}{2} + \frac{n}{2} \equiv c \pmod {\frac{n}{2}}$. Then again $-a_{i, j},a_{i,j} \in C_c$.
	
	Note that exactly one $C_c$ contains each entry in $A$, as given $j$ and $c$, where $j \leq \frac{t}{2}$ iff $c \leq \frac{n}{2}$, there will be exactly one $i$ solving the modular congruence above. As $C_c$ contains the opposites to its elements as well, it follows that $\{C_c\}$ partitions $X$. Also, note that $C_c$ contains two elements from each of half of the columns of $A$, so $|C_c| = t$.
	
	Now by definition of $C_c$, if $C_c$ contains $x$, then $C_c$ contains $-x$; so the sum of each $C_c$ is zero. Lastly, we need to prove that $\{R_r\}$ and $\{C_c\}$ are orthogonal partitions. Let $i \in [1, \frac{n}{2}]$ be arbitrary. Then $R_{2i - 1} = A_i$. Assume that there is some $c$ such that $C_c$ and $A_i$ have two elements, namely
$a_{i, j}$ and $a_{i, j'},$ in common. Because different $C_c$ are used for the left and right halves of $A$, we can assume that $j$ and $j'$ are on the same side, i.e. $j, j' \in [1, \frac{t}{2}]$ or $j, j' \in [\frac{t}{2} + 1, t]$. In the former case, we have $i - j + 1 \equiv i - j' + 1 \pmod {\frac{n}{2}}$; in the latter case, we have $i - j + 1 + \frac{t}{2} + \frac{n}{2} \equiv i - j' + 1 + \frac{t}{2} + \frac{n}{2} \pmod {\frac{n}{2}}$. In either case, canceling, $j \equiv j' \pmod {\frac{n}{2}}$. But this is impossible, as $|j' - j|$ is at most $\frac{t}{2} - 1$, and $\frac{t}{2} - 1 < \frac{t}{2} \leq \frac{n}{2}$.
	
	Therefore $C_c$ and $A_i = R_{2i-1}$ have at most one element in common. Because $C_c$ contains opposites to all of its elements, we may similarly say that $C_c$ and $R_{2i}$ have at most one element in common, as if they shared two elements $x$ and $x'$, $C_c$ and $A_i$ would share $-x$ and $-x'$. So $\{C_c\}$ and $\{R_r\}$ are orthogonal partitions.
	
	Define array $B$ as follows: the cell $(r,c)$ of $B$ contains the single element in $R_r\cap C_c$ if they have nonempty intersection and is left blank otherwise. Then every row and every column of $B$ sums to zero, and $B$ has exactly $t$ entries in each of its $n$ rows and $n$ columns. Thus $B$ is the desired array (see Figure \ref{4in6}).
	
	\begin{figure}[ht]
		$$\begin{array}{| c | c | c | c |}
		\hline
		1 & 2 & 3 & -6 \\ \hline
		8 & -12 & -7 & 11 \\ \hline
		-9 & 10 & 4 & -5 \\ \hline
		\end{array}$$
		
		$$\begin{array}{| c | c | c | c | c | c |}
		\hline
		1 & 2 & 3 & -6 & & \\ \hline
		-1 & -2 & -3 & 6 & & \\ \hline
		& & 8 & -12 & -7 & 11 \\ \hline
		& & -8 & 12 & 7 & -11 \\ \hline
		4 & -5 & & & -9 & 10 \\ \hline
		-4 & 5 & & & 9 & -10 \\ \hline
		\end{array}$$
		
		\caption{Top: a $3 \times 4$ tight integer Heffter array. Bottom: the corresponding $SMS(6;4)$.}
		\label{4in6}
	\end{figure}
\end{proof}

The rest of this case proceeds much as the case where $t$ is even and $n$ is odd, but with one important difference: instead of a diagonal $SMS(n;6)$ for $n$ even, we construct a $7$-diagonal $SMS(n;6)$ and show that this gives sufficient results.

\begin{lem}\label{6in2mod4diag}
	Let $n\equiv 2 \pmod 4$ and $n \geq 10$. Then there exists a shiftable $7$-diagonal $SMS(n;6)$.
\end{lem}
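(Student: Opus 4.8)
The plan is to mimic the structure of Lemma \ref{6inodddiag}, but to work with a tight signed magic array in place of a magic rectangle. Since $n \equiv 2 \pmod 4$, the $3 \times n$ tight case is governed by Lemma \ref{oddxeven}: there exists an $SMA(3,n)$, say $A$, whose entries are precisely the integers of $[-\tfrac{3n}{2},-1]\cup[1,\tfrac{3n}{2}]$ (note $3n/2$ is odd here, but the relevant fact is just that $A$ uses each nonzero value in that range once and has zero row and column sums). We first shift $A$ so that all its entries are positive, obtaining an array $A'$ with entries exactly $[1,3n]$ and with every column having the same sum $\sigma$; the row sums of $A'$ are $n\sigma/3$ and need not concern us. The point of passing to $A'$ is the same as in Lemma \ref{6inodddiag}: we want the three entries in each column of the source array to have a common aggregate so that, after distributing them together with their negatives into the final array, columns still balance.

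Next I would define the $n \times n$ array $B$ by the same ``diagonal spreading'' device: for $i \in [1,3]$ and $j \in [1,n]$, place $a'_{i,j}$ in row $i'$ and column $j'$, and $-a'_{i,j}$ in row $i'$ and column $j'+1$ (indices of columns mod $n$), where $j'=j$ and $i'$ is chosen by a linear congruence in $i$ and $j$ modulo $n$. In Lemma \ref{6inodddiag} the choice was $i' \equiv 2i-1+j-1 \pmod n$, which works because $\gcd(2,n)=1$ when $n$ is odd. Here $n$ is even, so $2i-1$ ranges over only the odd residues and the ``step'' of $2$ between consecutive source rows is no longer a unit modulo $n$; this is exactly why the array will occupy seven diagonals rather than six, and it is the main obstacle. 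I expect the right fix is to use a congruence of the form $i' \equiv i + c(i) + j - 1 \pmod n$ where $c(i)$ is a small correction depending on $i$ (for instance $c(1)=c(2)=0$, $c(3)=$ something, or a shift that separates the three ``bands'' of diagonals produced by the three source rows). One then checks that the set of values $i'-j'$ taken over all filled cells is a block of \emph{seven} consecutive residues modulo $n$ — equivalently, the differences $\{i'-j, i'-(j+1)\}$ range over $\{-1,0,1,2,3,4,5\}$ or some translate — which gives the ``$7$-diagonal'' conclusion. Because the construction uses each value of $[1,3n]$ once as a positive and once (negated) as a negative, and $3n = \tfrac{tn}{2}$ with $t=6$, the entry set is exactly the required $[-3n,-1]\cup[1,3n]$, and shiftability is immediate since every filled row of $B$ consists of $\pm$-pairs $a'_{i,j}, -a'_{i,j}$.

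Finally I would verify the two sum conditions. Each row $i'$ of $B$ receives, by construction, a collection of entries that pair up as $x$ and $-x$ (each placed $a'_{i,j}$ is accompanied one column over by $-a'_{i,j}$), so every row sums to zero. For a column $j'$: the entries landing in column $j'$ are the ``positive'' placements $a'_{i,j'}$ for $i\in[1,3]$ together with the ``negative'' placements $-a'_{i,j'-1}$ for $i\in[1,3]$ (indices mod $n$), so the column sum is $\sum_{i=1}^3 a'_{i,j'} - \sum_{i=1}^3 a'_{i,j'-1} = \sigma - \sigma = 0$, using that every column of $A'$ has the same sum $\sigma$. The only genuinely delicate bookkeeping is confirming, for the chosen correction $c(i)$, that (a) no two of the six placements collide in a single cell (so each cell holds at most one entry and there are exactly six filled cells per row and per column), and (b) the diagonal count is exactly seven and not more; both reduce to checking a handful of linear congruences modulo $n$ and using $n \equiv 2 \pmod 4$, $n \ge 10$ to rule out degenerate overlaps. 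I would then conclude by exhibiting the construction for a small case such as $n=10$ as in Figure-style illustration, paralleling Figure \ref{6in7diag}.
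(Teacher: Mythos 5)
There is a genuine gap, and it sits at the very first step of your plan. You want to shift an $SMA(3,n)$ so that its entries become exactly $[1,3n]$ with all column sums equal to a common value $\sigma$. Neither property is attainable when $n$ is even. First, since $3n$ is even, the entry set of an $SMA(3,n)$ is $[-\tfrac{3n}{2},-1]\cup[1,\tfrac{3n}{2}]$, which omits $0$; no translate of this set is the interval $[1,3n]$ (any translate is an interval of length $3n+1$ with its midpoint removed). Second, and more fundamentally, the constant $\sigma$ cannot exist: a $3\times n$ array whose entries are precisely $[1,3n]$ with all column sums equal would give a partition of $[1,3n]$ into $n$ triples of equal sum $\tfrac{1}{n}\sum_{k=1}^{3n}k=\tfrac{3(3n+1)}{2}$, which is not an integer when $n$ is even. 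Since your column-sum verification is exactly the telescoping identity $\sigma_{j'}-\sigma_{j'-1}=0$, which forces all column sums of $A'$ to be equal, the argument cannot be repaired by tweaking the row-placement congruence $i'$; you have located the difficulty in the wrong place (the step size $2$ modulo an even $n$ causes no collisions for $n\geq 6$), while the real obstruction is arithmetic.

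This obstruction is precisely what the paper's proof is built to circumvent, and it explains why that proof looks so different from Lemma \ref{6inodddiag}. The paper starts from the equal-sum triple partition of Lemma \ref{oddinoddcols} applied to $m=n-1$ (odd), shifts it into $[1,3n-3]$, and then uses an insertion map $p$ to spread these values over $[1,3n]\setminus\{1,\tfrac{3n}{2},3n\}$, adjoining $\{1,\tfrac{3n}{2},3n\}$ as an extra triple. The resulting $n$ triples necessarily fall into two sum classes, $\tfrac{9n+2}{2}$ and $\tfrac{9n+4}{2}$; the paper then pairs triples of equal sum into two-column blocks (possible because $n\equiv 2\pmod 4$ makes $\tfrac{n-2}{2}$ even), leaving one unbalanced pair whose columns sum to $\pm 1$, and repairs that defect with an explicit swap of a few entries. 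The two-column block structure is also why the construction occupies seven diagonals rather than six. Your instinct that seven diagonals and some correction would be needed is right, but the correction has to act on the column sums, not on the diagonal placement.
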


\begin{proof}
	In Lemma \ref{oddinodd}, we gave a construction that partitions the interval $[-\frac{3m-1}{2}, \frac{3m-1}{2}]$ into sets of three of equal sum for any odd $m \geq 3$. Let $m = n - 1$, and after carrying out this partition, add $\frac{3m-1}{2} + 1$ to every number in the partition, giving a partition of $[1, 3n-3]$ into sets of three of equal sum $\frac{9n-6}{2}$. We make three other observations about this partition.
	
	\begin{enumerate}
		\item The numbers in $[1, n-1]$ are placed in distinct sets, as are the numbers in $[n, 2n-2]$ and $[2n-1, 3n-3]$.
		\item The number $1$ is in the same set in the partition as the number $2n-3$.
		\item The number $n-1$ is in the same set in the partition as the number $n$.
	\end{enumerate}
	
	Now define the function $p$ as follows:
	
	$$p(x) = \begin{cases}
	x + 1 & \text{if } x < \frac{3n-2}{2} \\
	x + 2 & \text{if } x \geq \frac{3n-2}{2}
	\end{cases}$$
	
	Note that on the domain $[1, 3n - 3]$, the range of $p$ is $[1, 3n] \setminus \{1, \frac{3n}{2}, 3n\}$. These three remaining numbers add to $S = \frac{9n + 2}{2}$.
	
	We apply $p$ to every element of our previous partition of $[1, 3n-3]$ to partition the other elements of $[1, 3n]$ into sets of three. It follows from observation 1 above that $\frac{n-2}{2}$ of these, which we will call ``the first class,'' now have sum $\frac{9n-6}{2} + 1 + 1 + 2 = \frac{9n+2}{2}$, while the other $\frac{n}{2}$, which we call ``the second class,'' have sum $\frac{9n-6}{2} + 1 + 2 + 2 = \frac{9n+4}{2}$.
	
	Label every set in the partition plus $\{1, \frac{3n}{2}, 3n\}$ with the labels $P_1, P_2, \dots, P_n$, such that $i \in P_i$. Then from observation 2 above, $p(2n - 3) = 2n - 1 \in P_2$. This means that $P_2$ is of the second class as defined earlier, so $\sum P_2 = \frac{9n+4}{2}$. Also, note that from observation 3, $p(n) = n + 1 \in P_n$.
	
	We now pair the $P_i$ into pairs $B_j$, with $j \in [1, \frac{n}{2}]$, as follows: $B_1$ contains $P_1$ and $P_2$. Then the other $B_j$ may be chosen arbitrarily, following two constraints.
	
	\begin{enumerate}
		\item Each $B_j$, $j\neq 1$, contains two $P_i$ with equal sums. (This is possible because $\sum P_1 \neq \sum P_2$, and of the remaining $(n-2)$ sets, exactly $\frac{n-2}{2}$ are in the first class and $\frac{n-2}{2}$ are in the second class, as defined above. Because $n \equiv 2 \pmod 4$, we know that $\frac{n-2}{2}$ is even.)
		\item $P_n \in B_2$.
	\end{enumerate}
	
	Now, we define an $n \times n$ array $A=[a_{i,j}]$ as follows. Let $j \in [1, \frac{n}{2}]$ and $k \in [1, 3]$. Then $B_j$ contains two $P_i$, say $P_{i_1}$ and $P_{i_2}$, with $i_1 > i_2$. Arrange the elements in each of these in order, so that we may refer to them as $P_{i, 1}$, $P_{i, 2}$, and $P_{i, 3}$, for $i \in \{i_1, i_2\}$. We will place easily satisfied constraints on this otherwise arbitrary labeling: $P_{1, 2} = 1, P_{2, 2} = 2, P_{n, 1} = n, P_{n, 2} = n + 1$.
	
	For convenience, we will let the indices in $A$ ``wrap around,'' so that e.g. $a_{n+1, n+3} = a_{1, 3}$. Then for each $j, k$ as above, we fill the following cells:
	$$a_{2j+2k-3, 2j-1} = P_{i_1, k};
	a_{2j+2k-3, 2j} = -P_{i_1, k};
	a_{2j+2k-2, 2j-1} = -P_{i_2, k}
	a_{2j+2k-2, 2j} = P_{i_2, k}.$$
	We leave the other cells in $A$ empty.
	
	Note that in this array, each row contains three numbers and their opposites and thus sums to zero. As for the columns, each column $2j-1$ contains $P_{i_1}$ and the opposite of $P_{i_2}$, while column $2j$ contains $P_{i_2}$ and the opposite of $P_{i_1}$. It follows that every column sums to zero except for the first column, which sums to $\sum P_2 - \sum P_1 = \frac{9n+4}{2} - \frac{9n+2}{2} = 1$, and the second column, which sums to $\sum P_1 - \sum P_2 = -1$.
	
	We will make a few other observations about this array, related to which numbers are in which cells: $a_{3, 1} = P_{2, 2} = 2$, $a_{4, 2} = P_{1, 2} = 1$, $a_{3, 3} = P_{n, 1} = n$, $a_{3, 4} = -n$, $a_{5, 3} = P_{n, 2} = n+1$, $a_{5, 4} = -(n+1)$, $a_{4, 3} + a_{4, 4} = 0$.
	
	Let $A'$ be defined as $A$ with several exceptions:	
		$a'_{3, 1} = a_{4, 2}$;
		$a'_{4, 2} = a_{3, 1}$;
		$a'_{3, 3} = a_{5, 3}$;
		$a'_{4, 3} = a_{3, 3}$;
		$a'_{5, 3} = a_{4, 3}$;
		$a'_{4, 4} = a_{5, 4}$;
		$a'_{5, 4} = a_{4, 4}$.

	We see that this is a permutation, so the (multi-)set of entries used in $A'$ is the same as that of $A$: the elements $[-3n, -1]$ and $[1, 3n]$, each exactly once.
	
	In $A'$, every element is in the same column as in $A$ with the exception of $a'_{3, 1} = 1$ and $a'_{4, 2} = 2$. It follows that the sum of column $1$ is now $1 - 2 + 1 = 0$, while the sum of column $2$ is now $-1 - 1 + 2 = 0$.
	
	The only rows that have changed from $A$ to $A'$ are rows $3$, $4$, and $5$.
$$\begin{array}{lcl}
    \sum_c a'_{3, c} &= &\sum_c a_{3, c} - a_{3, 3} + a_{5, 3} - a_{3, 1} + a_{4, 2}\\
	                  &=& 0 - n + n + 1 - 2 + 1 = 0;\\
	\sum_c a'_{4, c} &=& \sum_c a_{4, c} - a_{4, 3} + a_{3, 3} - a_{4, 4} + a_{5, 4} - a_{4, 2} + a_{3, 1}\\
	                 &=& 0 - a_{4, 3} + n - a_{4, 4} - (n+1) - 1 + 2 = 0;\\
	\sum_c a'_{5, c} &=& \sum_c a_{5, c} - a_{5, 3} + a_{4, 3} - a_{5, 4} + a_{4, 4}\\
	                 &=& 0 - (n+1) + a_{4, 3} + (n+1) + a_{4, 4} = 0.
\end{array}$$
	
	Thus, in $A'$, every row and every column contains six entries that sum to zero, and every number in $[-3n, -1] \cup [1, 3n]$ is used exactly once. Note lastly that if $k \in [1, 3]$:
$$\begin{array}{lcl}
    (2j+2k-3)-(2j-1) &=& 2k - 2 \in [0, 4]\\
	(2j+2k-3)-(2j) &=& 2k - 3 \in [-1, 3]\\
	(2j+2k-2)-(2j-1) &=& 2k - 1 \in [1, 5]\\
	(2j+2k-2)-(2j) &=& 2k - 2 \in [0, 4].
\end{array}$$
	
	So in any filled cell in $A$, or equivalently in $A'$, the difference between the row and column indices is congruent to an element of $[-1, 5]$ modulo $n$. The set $[-1, 5]$ has cardinality $7$, so $A'$ uses cells only in seven consecutive diagonals. This concludes the proof, as $A'$ is the array we seek. An example of this construction is given in Figure \ref{6in10diag}.
	
	\begin{figure}[ht]
		$$\begin{array}{c || c | c | c | c | c | c | c | c | c | c |}
		P_i & P_1 & P_2 & P_3 & P_4 & P_5 & P_6 & P_7 & P_8 & P_9 & P_{10} \\ \hline \hline
		P_{i, 1} & 15 & 19 & 3 & 4 & 5 & 6 & 7 & 8 & 9 & 10 \\ \hline
		P_{i, 2} & 1 & 2 & 17 & 14 & 12 & 20 & 18 & 16 & 13 & 11 \\ \hline
		P_{i, 3} & 30 & 26 & 27 & 28 & 29 & 21 & 22 & 23 & 24 & 25 \\ \hline \hline
		\sum P_i & 46 & 47 & 47 & 46 & 46 & 47 & 47 & 47 & 46 & 46 \\ \hline
		\end{array}$$
		
		$$\begin{array}{| c | c | c | c | c | c | c | c | c | c |}
		\hline
		19 & -19 & & & & & 27 & -27 & 14 & -14 \\ \hline
		-15 & 15 & & & & & -21 & 21 & -12 & 12 \\ \hline
		\mathbf{1} & -2 & \mathbf{11} & -10 & & & & & 28 & -28 \\ \hline
		-1 & \mathbf{2} & \mathbf{10} & \mathbf{-11} & & & & & -29 & 29 \\ \hline
		26 & -26 & \mathbf{-9} & \mathbf{9} & 7 & -7 & & & & \\ \hline
		-30 & 30 & -13 & 13 & -8 & 8 & & & & \\ \hline
		& & 25 & -25 & 18 & -18 & 3 & -3 & & \\ \hline
		& & -24 & 24 & -16 & 16 & -6 & 6 & & \\ \hline
		& & & & 22 & -22 & 17 & -17 & 4 & -4 \\ \hline
		& & & & -23 & 23 & -20 & 20 & -5 & 5 \\ \hline
		\end{array}$$
		
		\caption{Top: the partition $\{P_i\}$ of $[1, 3n]$ for $n = 10$. Bottom: the corresponding $7$-diagonal
         $SMS(10;6)$\  $A'$, with the elements that differ from $A$ bolded.}
		\label{6in10diag}
	\end{figure}
\end{proof}

Now, we apply an induction argument, as in Theorem \ref{eveninodd}.

\begin{thm}
	\label{evenineven}
	Given $n \geq t > 3$ with $n, t$ even, there exists an $SMS(n;t)$. If $t < n$ with $t, n \equiv 2 \pmod 4$, this square is $(t+1)$-diagonal.
\end{thm}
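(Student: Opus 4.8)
The plan is to prove this by the same two-base-case induction used in Theorem~\ref{eveninodd}, splitting on the residue of $n,t$ modulo $4$. First I would dispose of the case where $t$ or $n$ is a multiple of $4$: here Lemma~\ref{tnmult8} directly yields an $SMS(n;t)$, and no diagonality claim is needed (the second sentence of the theorem only concerns $t,n\equiv 2\pmod 4$). So the substance is the case $n\equiv t\equiv 2\pmod 4$ with $t<n$, where we must produce a $(t+1)$-diagonal square.

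For that case I would induct on $t$ in steps of $4$, staying within the residue class $t\equiv 2\pmod 4$. The base case is $t=6$: since $n\equiv 2\pmod 4$ and $n>t=6$, we have $n\ge 10$, so Lemma~\ref{6in2mod4diag} gives a shiftable $7$-diagonal $SMS(n;6)$, which is $(t+1)$-diagonal as required. For the inductive step, assume a $(t'+1)$-diagonal $SMS(n;t')$ exists for $t'=t-4$ (with $t'\equiv 2\pmod 4$, $t'<n$, so $t'\ge 6$ and $n\ge 10$). Its entries lie in $k=t'+1=t-3$ consecutive diagonals. To apply Lemma~\ref{add4tot} I need $k\le n-4$, i.e. $t-3\le n-4$, i.e. $t\le n-1$; this holds since $t<n$ and both are even so in fact $t\le n-2$. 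I also need "either $t$ or $n$ even," which is automatic. Lemma~\ref{add4tot} then produces a $(k+4)$-diagonal $SMS(n;t)$, and $k+4=(t-3)+4=t+1$, exactly the claimed diagonal count. This closes the induction.

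One subtlety to address explicitly: the inductive hypothesis in Lemma~\ref{add4tot} requires a genuinely $k$-diagonal square with $k\le n-4$, and I should verify that the base object from Lemma~\ref{6in2mod4diag} (and each array produced along the way) really does have its entries confined to the stated number of consecutive diagonals with enough empty diagonals left over; this is guaranteed by the statements of those lemmas, so the bookkeeping $k=t'+1\le n-4$ is the only inequality to check at each stage, and it reduces to $t\le n-2$, which follows from $t<n$ with $t,n$ both even. The other subtlety is simply to note that when $t=n$ (both even), the diagonality clause is vacuous and Lemma~\ref{tnmult8} handles $4\mid n$ while the case $n=t\equiv 2\pmod 4$ also falls under Lemma~\ref{tnmult8} only if $4\mid\frac{nt}{2}=\frac{n^2}{2}$ — here $n^2/2\equiv 2\pmod 4$, so Lemma~\ref{tnmult8} does \emph{not} apply, and I would need a separate argument or a forward reference for the square case $n=t\equiv 2\pmod 4$; I expect this to be the main gap, and the cleanest fix is to observe that an $SMA(n,n)$ with $n\equiv 2\pmod 4$ already exists by Theorem~\ref{tight} (the tight case, $n>2$), which is precisely an $SMS(n;n)$. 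So the only real work is the $(t+1)$-diagonal induction for $t<n$, $n\equiv t\equiv 2\pmod 4$, and the main obstacle there is nothing more than confirming the diagonal-count arithmetic propagates correctly through Lemma~\ref{add4tot}.
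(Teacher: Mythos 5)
Your proposal is correct and follows essentially the same route as the paper: dispose of the $4\mid t$ or $4\mid n$ case via Lemma~\ref{tnmult8}, handle $t=n$ via the tight even-by-even construction (the paper cites Lemma~\ref{evenxeven} directly, which is what Theorem~\ref{tight} reduces to here), and for $t<n$ with $t,n\equiv 2\pmod 4$ run the induction from the $7$-diagonal $SMS(n;6)$ of Lemma~\ref{6in2mod4diag} through Lemma~\ref{add4tot}, with the same bookkeeping $t-3\le n-4$. The "gap" you flagged for $t=n\equiv 2\pmod 4$ is real and your fix is exactly the paper's.
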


\begin{proof}
	If $t$ or $n$ is a multiple of $4$, we may apply Lemma \ref{tnmult8}.
	If $t = n$, Lemma \ref{evenxeven} gives us the desired result, as the square is tight.
	Otherwise, we proceed by induction. In the base case, let $t = 6$; then there exists a $7$-diagonal
$SMS(n,t)$ by Lemma \ref{6in2mod4diag}.
	For $6 < t < n$, assume there exists a $(t-3)$-diagonal $SMS(n;t-4)$. Then there exists a $(t+1)$-diagonal $SMS(n;t)$ by Lemma \ref{add4tot}.
	Therefore, by induction and the other cases, there exist such squares for all even $t, n > 2$.
\end{proof}

\subsection{$n$ even, $t$ odd}

This is the most complex case, requiring several subcases of its own. We may proceed by an induction argument as in the previous cases, but the base case becomes much more complex. We give five such cases.

\begin{lem}\label{3ineven}
	Let $n\geq 4$ be an even integer. Then there exists an $SMS(n;3)$.
\end{lem}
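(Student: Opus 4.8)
The plan is to construct an $SMS(n;3)$ directly for all even $n\ge 4$, using the same philosophy already established in this section: produce two orthogonal partitions of the target set $S=[-\tfrac{3n}{2},-1]\cup[1,\tfrac{3n}{2}]$ (note $nt=3n$ is even, so zero is not used), one partition $\{C_c\}$ governing columns and one partition $\{R_r\}$ governing rows, each consisting of $n$ triples summing to zero, and then set $b_{r,c}$ to be the unique element of $R_r\cap C_c$ when this intersection is nonempty. The existence of such orthogonal partitions immediately yields the array, exactly as in Theorem \ref{oddinodd} and Lemma \ref{tnmult8}. Alternatively — and this is probably the cleaner route — I would mimic the explicit formula approach of Lemma \ref{3xeven}: first build an $SMA(3,n)$ by that lemma's construction, which gives a $3\times n$ array $A$ whose three rows partition $S$ into three ``blocks'' and whose $n$ columns are the zero-sum triples $\{C_c\}$; then I need to redistribute these $3n$ numbers into an $n\times n$ array so that each row and column has exactly three entries. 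So the real content is finding the row-partition $\{R_r\}$ orthogonal to the columns of $A$.

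The key steps, in order: (1) invoke Lemma \ref{3xeven} to get the $3\times n$ array $A=[a_{i,j}]$ with explicit entries, noting that row $i$ of $A$ is a set $D_i$ and each column $j$ is a zero-sum triple $C_j$; (2) define $R_r$ for $r\in[1,n]$ by a cyclic rule of the form ``$R_r$ contains the entry of $D_i$ lying in column $c$ where $c\equiv r+\sigma(i)\pmod n$'' for suitable shifts $\sigma(1),\sigma(2),\sigma(3)$ — orthogonality to $\{C_c\}$ is automatic from the fact that for each fixed $i$ the map $r\mapsto c$ is a bijection on $\mathbb{Z}_n$; (3) verify that each $R_r$ sums to zero. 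Step (3) is where I expect the work: the columns of $A$ sum to zero but the three entries picked for $R_r$ come from three \emph{different} columns of $A$, so I must check that the ``defects'' telescope. Following the bookkeeping idea of Lemma \ref{oddinoddrows}, I would track, as $r$ increases by $1$, how much the sum $\sum R_r$ changes: within each block $D_i$ the chosen entry moves by a fixed step in the column index, hence its value changes by a predictable amount except at a bounded number of ``breaks'' (wrap-arounds), and I would choose the shifts $\sigma(i)$ so that the constant parts sum to zero and the break contributions cancel in pairs, forcing $\sum R_{r+1}=\sum R_r$; since $\sum_{x\in S}x=0$, every $R_r$ then sums to zero. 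I'd display a worked example (say $n=6$ or $n=8$) in a figure, as the paper does throughout.

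The main obstacle is that $n$ is even here, so the coprimality tricks used in the odd cases (e.g. $\tfrac{n-1}{2}$ coprime to $n$) are not directly available; the shifts $\sigma(i)$ must be chosen as genuine units mod $n$ (or the cyclic rule adjusted block-by-block), and the piecewise structure of the formula in Lemma \ref{3xeven}, which depends on $j\bmod 4$, means the ``break'' analysis has a few more cases than in Lemma \ref{oddinoddrows}. If this telescoping turns out to be awkward, the fallback — and perhaps the approach the authors actually take — is to handle $n\equiv 0\pmod 4$ and $n\equiv 2\pmod 4$ separately, building small base arrays for $n=4$ and $n=6$ explicitly and then using a shiftable gadget (a shiftable $SMA$ or a shiftable $3$-diagonal block) to add rows/columns four at a time, exactly as in Lemmas \ref{2xn}, \ref{evenxeven}, and \ref{add4tot}. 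Either way, once the two orthogonal partitions are in hand, the conclusion that $B$ is the desired $SMS(n;3)$ is immediate.
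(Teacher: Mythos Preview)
Your framework is exactly the paper's: build the $SMA(3,n)$ from Lemma~\ref{3xeven}, take its columns as the zero-sum triples $\{C_c\}$, and then find an orthogonal partition $\{R_r\}$ into zero-sum triples. Where you diverge is in the choice of $\{R_r\}$. You propose a cyclic-shift rule $c\equiv r+\sigma(i)\pmod n$ and a break-telescoping argument in the style of Lemma~\ref{oddinoddrows}; you correctly flag that the even-$n$ arithmetic and the $j\bmod 4$ piecewise structure make this delicate, and you fall back to an inductive scheme if it does not close.

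The paper avoids all of that with a one-line choice you overlooked: set $R_r=-C_r=\{-x:x\in C_r\}$. Since each $C_r$ sums to zero, each $R_r$ does too, automatically; no telescoping, no breaks. The only thing left to verify is orthogonality, which unwinds to the concrete statement that for every column $j$, the negatives of its three entries land in three \emph{distinct} columns of $A$. Because the columns sum to zero, it even suffices to check this for two of the three entries per column. Using the explicit formulas $a_{1,j}$ and $a_{3,j}$ from Lemma~\ref{3xeven}, one checks in four short cases (on $j\bmod 4$) that $-a_{1,j}$ sits in column $j\pm 1$ and $-a_{3,j}$ in column $j\mp 1$, with the boundary columns $j=1,n$ handled separately.

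So your plan is not wrong in spirit, but it trades a trivial zero-sum verification for a hard one, and a hard orthogonality check for a mechanical one. The paper's $R_r=-C_r$ trick is the idea worth remembering; it exploits the fact that the entry set $S$ is symmetric about zero and that the specific $SMA(3,n)$ of Lemma~\ref{3xeven} places $x$ and $-x$ in adjacent columns.
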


\begin{proof}
	We construct two orthogonal partitions, $\{C_c\}$ and $\{R_r\}$, of the set $[-\frac{3n}{2},-1] \cup [1,\frac{3n}{2}]$. First, construct an $SMA(3,2k)$, say $A=[a_{i,j}]$,
using the construction given in the proof of Lemma \ref{3xeven}, where $2k=n$. The first partition is $\{C_c\}$, where $C_c$ is the set consisting of all the entries in the $c$th column. It is now sufficient to demonstrate the existence of a partition, $\{R_r\}$, that is orthogonal to $\{C_c\}$ and consists of $n$ sets of cardinality 3 that sum to zero. Define this partition as follows: for $1 \leq r \leq n$, let $R_r=-C_r$, where $-C_r= \{-c: c \in C_r \}$. This collection of sets clearly partitions $[-\frac{3n}{2},-1] \cup [1,\frac{3n}{2}]$ into $n$ sets of cardinality 3 that sum to 0, so it remains to show that it is orthogonal to $\{C_c\}$. Note this is equivalent to proving that the opposites of the entries of a given column $j$ in $A$ all lie in different columns. In fact, because of the zero-sum property of the rows and columns in $A$, it is sufficient to show that two of the entries in each column, when negated, lie in different columns.
	
	So fix $j$ and consider the entries $-a_{i,j}$ for $i=1,2,3$. We must consider cases depending on the value of $j$. If $j=1$, the entries are $1,\frac{3n}{2}-1$, and $-3n$. Note $-1$ lies in the second column, while $3n$ lies in the nth column.

	Next, suppose that $1<j<n$ and $j \equiv 0 \pmod 4$. Then the entries in the column are $-\left(\frac{3p_j-2}{2}\right),-3(k-p_j)$, and $\frac{3p_j-2}{2}+3(k-p_j)$, where $p_j= \lceil \frac{j}{2} \rceil$. Note $\frac{3p_{j}-2}{2}=\frac{3p_{j-1}-2}{2}$ and $j-1 \equiv 3 \pmod 4$. Therefore, $\frac{3p_{j}-2}{2}$ lies in column $j-1$. On the other hand, $3(k-p_j)=3(k-p_{j+1}+1)$, and $j+1 \equiv 1 \pmod 4$. Thus, $3(k-p_j)$ lies in column $j+1$.
	
	Now suppose that $1<j<n$ and $j \equiv 1 \pmod 4$. Then the entries in the column are $\left(\frac{3p_j-1}{2}\right),3(k-p_j+1)$, and $-\left(\frac{3p_j-1}{2}\right)-3(k-p_j+1)$.  Note $-\left(\frac{3p_{j}-1}{2}\right)=-\left(\frac{3p_{j+1}-1}{2}\right)$ and $j+1 \equiv 2 \pmod 4$. Therefore, $-\left(\frac{3p_{j}-1}{2}\right)$ lies in column $j+1$. On the other hand, $-3(k-p_j+1)=-3(k-p_{j-1})$, and $j-1 \equiv 0 \pmod 4$. Thus, $3(k-p_j)$ lies in column $j-1$.
	
	Next, suppose that $1<j<n$ and $j \equiv 2 \pmod 4$. Then the entries in the column are $-\left(\frac{3p_j-1}{2}\right),-3(k-p_j)$, and $\frac{3p_j-1}{2}-3(k-p_j)$. Note $\frac{3p_{j}-1}{2}=\frac{3p_{j-1}-1}{2}$ and $j-1 \equiv 1 \pmod 4$. Therefore, $\frac{3p_{j}-1}{2}$ lies in column $j-1$. On the other hand, $3(k-p_j)=3(k-p_{j+1}+1)$, and $j+1 \equiv 3 \pmod 4$. Thus, $3(k-p_j)$ lies in column $j+1$.
	
	Finally, suppose $1<j<n$ and $j \equiv 3 \pmod 4$. Then the entries in the column are $\left(\frac{3p_j-2}{2}\right),3(k-p_j+1)$, and $-\left(\frac{3p_j-2}{2}\right)-3(k-p_j+1)$.  Note $-\left(\frac{3p_{j}-2}{2}\right)=-\left(\frac{3p_{j+1}-2}{2}\right)$ and $j+1 \equiv 0 \pmod 4$. Therefore, $-\left(\frac{3p_{j}-2}{2}\right)$ lies in column $j+1$. On the other hand, $-3(k-p_j+1)=-3(k-p_{j-1})$, and $j-1 \equiv 2 \pmod 4$. Thus, $3(k-p_j)$ lies in column $j-1$.
	
	It is unnecessary to check the last column because if the last column did have the property that two of its elements, when negated, were in the same column, then another column would have that same property.
	
	Let $B$ be the $n \times n$ array where cell $(i,j)$ contains the element common to $R_i$ and $C_j$ if such an element exists, and is left blank otherwise. Then $B$ contains 3 entries in each row and column that sum to zero, as required (see Figure \ref{3in6}).
\end{proof}
	
	\begin{figure}[ht]
		$$\begin{array}{|c|c|c|c|c|c|}
		\hline
		& -1 & -8  & & & 9  \\
		\hline
		1 &  & 6 & & -7 & \\
		\hline
		8 & -6 &  & -2 & &  \\
		\hline
		&  & 2 &  & 3 & -5  \\
		\hline
		& 7 & & -3 &  & -4 \\
		\hline
		-9& & & 5 & 4 &  \\
		\hline
		\end{array}$$
		
		\caption{An $SMS(6;3)$, using the method of Lemma \ref{3ineven}.}
		\label{3in6}
	\end{figure}

\begin{lem}
	\label{3in0mod4diag}
	Let $n=4k$ with $k\geq 1$. Then there exists a diagonal $SMS(n;3)$.
\end{lem}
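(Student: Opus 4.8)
The plan is to exhibit an explicit construction and then verify the three defining features of a diagonal $SMS(n;3)$: the filled cells occupy three consecutive diagonals, each element of the required set $[-6k,-1]\cup[1,6k]$ (here $n=4k$ forces $\tfrac{3n}{2}=6k$) is used exactly once, and every row and column has sum $0$.

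First I would reduce the problem to a purely combinatorial one. By cyclically permuting the columns we may assume the three filled diagonals are the $0$th, $1$st and $2$nd, so that the filled cells of row $i$ are $(i,i),(i,i+1),(i,i+2)$ with indices read modulo $n$; write $x_i,y_i,z_i$ for the entries there. The row condition is $x_i+y_i+z_i=0$, and since column $j$ receives $x_j$, $y_{j-1}$ and $z_{j-2}$, the column condition is $x_j+y_{j-1}+z_{j-2}=0$. A short calculation (set $u_i:=x_i$ and check that $y_i+u_i+u_{i+1}$ is independent of $i$) shows that both families of relations hold simultaneously \emph{if and only if} there is a sequence $(u_i)_{i\in\mathbb{Z}_n}$ and a constant $C$ with
$$x_i=u_i,\qquad y_i=-(u_i+u_{i+1})-C,\qquad z_i=u_{i+1}+C.$$
With this parametrization the row and column sums are automatic, and the only remaining requirement is to choose $(u_i)$ and $C$ so that the $3n$ entries $\{u_i\}\cup\{u_i+C\}\cup\{-(u_i+u_{i+1})-C\}$ form exactly $[-6k,-1]\cup[1,6k]$.

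Next I would take $C=8k+1$ and let $\{u_i\}$ be the block $\{-6k,-6k+1,\dots,-2k-1\}$ of $4k$ consecutive integers; then automatically $\{u_i+C\}=\{2k+1,\dots,6k\}$, and these two blocks are disjoint with union $[-6k,-2k-1]\cup[2k+1,6k]$. So everything reduces to ordering $-6k,\dots,-2k-1$ cyclically so that the $4k$ cyclically consecutive sums $u_i+u_{i+1}$ are precisely $\{-10k-1,-10k,\dots,-6k-1\}\setminus\{-8k-1\}$, because then $\{-(u_i+u_{i+1})-C\}$ is exactly the central block $[-2k,-1]\cup[1,2k]$, completing the partition. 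After translating the $u_i$ by $6k$, this becomes the statement: the integers $0,1,\dots,4k-1$ can be arranged in a cyclic sequence whose $4k$ cyclically consecutive pair-sums are exactly $\{2k-1,2k,\dots,6k-1\}\setminus\{4k-1\}$. I would prove this by writing down such an arrangement explicitly, with cases according to the residue of the position modulo $4$ (this is where $4\mid n$ enters): start from a zig-zag interleaving of the small values $0,\dots,2k-1$ with the large values $2k,\dots,4k-1$, which produces a block of consecutive pair-sums, and then perturb a bounded number of terms near one spot so as to delete the unwanted sum $4k-1$ and pick up the two extreme sums $2k-1$ and $6k-1$ that an unperturbed zig-zag would miss. (For $n=4$ the cyclic order $1,3,2,0$ works; for $n=8$ the order $0,6,2,1,3,7,4,5$ works.)

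The main obstacle is this last step: producing the cyclic arrangement uniformly in $k$ and checking that its pair-sum set is exactly the prescribed one. The delicate point is that naively closing the zig-zag into a cycle reintroduces the forbidden sum $4k-1$ together with a repeated value, so the perturbation near the seam must be chosen carefully. Once the arrangement — equivalently the explicit piecewise formula for $(u_i)$ — is in hand, the only thing left to check is that each number of $[-6k,-1]\cup[1,6k]$ occurs exactly once, which is a routine finite verification; the row and column sums require no further argument thanks to the parametrization above, and the three-diagonal property is immediate from the construction.
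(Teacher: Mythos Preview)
Your reduction is correct and in fact is exactly the skeleton underlying the paper's construction: any diagonal $SMS(n;3)$ is forced to have the form $x_i=u_i$, $z_i=u_{i+1}+C$, $y_i=-(u_i+u_{i+1})-C$ for some cyclic sequence $(u_i)$ and constant $C$, and then only the bijectivity onto $[-6k,-1]\cup[1,6k]$ remains. Where you and the paper diverge is in the choice of $(u_i,C)$. The paper takes $C=-1$ and lets the $u_i$ run over the residue class $2\pmod 3$ in $[-6k,6k]$ (so that $\{z_i\}$ is the class $1\pmod 3$ and $\{y_i\}$ the multiples of $3$); it then writes down the cyclic order of the $u_i$ as an explicit four-branch piecewise formula in $i$ and $k$ and checks the row and column identities case by case. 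Your choice $C=8k+1$ with $\{u_i\}=[-6k,-2k-1]$ is equally legitimate and leads to the clean reformulation ``arrange $0,\dots,4k-1$ on a cycle so that the consecutive pair-sums are exactly $\{2k-1,\dots,6k-1\}\setminus\{4k-1\}$''; the arithmetic checks (disjointness of the three blocks, total-sum compatibility) are all fine.

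The gap is that you have not actually produced this arrangement for general $k$, and that is the entire content of the lemma. ``Start from a zig-zag and perturb near the seam'' is a plan, not a construction: the zig-zag $0,2k,1,2k+1,\dots,2k-1,4k-1$ gives the sum multiset $\{2k,\dots,6k-2\}$ with $4k-1$ doubled, so to reach the target you must delete \emph{both} occurrences of $4k-1$ and create both extremes $2k-1$ and $6k-1$; your own $k=2$ example is not a local perturbation of that zig-zag, which suggests the uniform rule is less obvious than the sketch implies. What is missing is exactly what the paper supplies in its version of the problem: an explicit formula for the arrangement valid for all $k$, together with the verification that it hits every required value once. Until you write that down, the proposal remains an outline rather than a proof.
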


\begin{proof}
	We  define  three finite sequences $a_i, b_i, c_i$, with $i \in [1, n]$, which together contain every integer in $[-\frac{3n}{2},-1]\cup[1,\frac{3n}{2}]$ exactly once.
	
	$$a_i = \begin{cases}
	-2 - 3k - 3\frac{i-1}{2} & \text{if } i < 2k \text{ and } 2 \nmid i \\
	-2 + 9k - 3\frac{i-1}{2} & \text{if } i > 2k \text{ and } 2 \nmid i \\
	-2 + 3k - 3\frac{i}{2} & \text{if } i < 4k \text{ and } 2 \mid i \\
	-2 + 3k & \text{if } i = 4k
	\end{cases}$$
	
	$$b_i = \begin{cases}
	3i & \text{if } i \leq 2k \\
	-12k + 3i & \text{if } 2k < i < 4k \\
	-6k & \text{if } i = 4k
	\end{cases}$$
	
	$$c_i = a_i + 1$$
	
	First, we must prove that these sequences, together, contain every number in the specified range. Let $x$ be an integer with $|x| \in [1, 6k]=[1,\frac{3n}{2}]$.
	
	If $x = 3y$ for integer $y > 0$, we have $b_y = x$.
	
	If $x = -3y$ for integer $y > 0$, either $y = 2k$ or $y < 2k$. In the former case, $x = b_{4k}$. In the latter case, $x = b_i$ where $i = 4k - y$.
	
	If $x = 3y - 2$ for $-2k < y \leq -k$, then $x = a_i$ where $i = 1 - 2k - 2y$.
	
	If $x = 3y - 2$ for $-k < y < k$, then $x = a_i$ where $i = 2k - 2y$.
	
	If $x = 3k - 2$, then $x = a_{4k}$.
	
	If $x = 3y - 2$ for $k < y \leq 2k$, then $x = a_i$ where $i = 1 + 6k - 2y$.
	
	If $x = 3y - 1$ for $-2k < y \leq 2k$, then $x = c_i$ such that $x - 1 = a_i$, which is given by the previous cases.
	
	Note that $\{a_i\}$, $\{b_i\}$, and $\{c_i\}$ are all disjoint, because $a_i \equiv 1 \pmod 3$, $b_i \equiv 0 \pmod 3$, and $c_i \equiv 2 \pmod 3$ for all $i$.
	As $\{a_i\} \cup \{b_i\} \cup \{c_i\} \supseteq [-6k, -1] \cup [1, 6k]$, and these sets both have cardinality $12k$, it follows that they are the same set, so $a_i, b_i, c_i$ contain between them every integer in $[-6k, -1] \cup [1, 6k]$ exactly once.
	
	Now we will prove the following additional property of these sequences: letting $a_{n+1} = a_1$, $b_{n+1} = b_1$, and $c_{n+1} = c_1$, then for all $i \in [1, n]$, we have $a_i + b_{i+1} + c_{i+1} = a_{i+1} + b_{i+1} + c_i = 0$. Note that we need only prove that the first expression is zero, as $a_{i+1} + c_i = c_{i+1} - 1 + a_i + 1 = a_i + c_{i+1}$.

	Let $i < 2k$ with $2 \nmid i$. Then
$$	
	a_i + b_{i+1} + c_{i+1}
	= -2 - 3k - 3\frac{i-1}{2} + 3(i+1) + -2 + 3k - 3\frac{i+1}{2} + 1=0.\\
$$
	
	Let $i < 2k$ with $2 \mid i$. Then
$$	
	a_i + b_{i+1} + c_{i+1}
	= -2 + 3k - 3\frac{i}{2} + 3(i+1) + -2 - 3k - 3\frac{i}{2} + 1=0.\\
$$
	
	Let $2k < i < 4k - 1$ with $2 \nmid i$. Then
$$	
	a_i + b_{i+1} + c_{i+1}
	= -2 + 9k - 3\frac{i-1}{2} - 12k + 3(i+1) + -2 + 3k - 3\frac{i+1}{2} + 1=0.\\
$$
	
	Let $2k \leq i < 4k$ with $2 \mid i$. Then
$$	
	a_i + b_{i+1} + c_{i+1}
	= -2 + 3k - 3\frac{i}{2} + -12k + 3(i+1) + -2 + 9k - 3\frac{i}{2} + 1=0.\\
$$
	
	Let $i = 4k - 1$. Then
$$	
	a_i + b_{i+1} + c_{i+1}
	= -2 + 9k - 3\frac{4k - 2}{2} + -6k + -2 + 3k + 1=0.\\
$$
	
	Let $i = 4k$. Then
$$
	a_i + b_{i+1} + c_{i+1}
	= -2 + 3k + 3 + -2 - 3k - 3\frac{1 - 1}{2} + 1=0.
$$
	
	Now we  define  an $n \times n$ array $A$ as follows:
	
	$$A_{i, j} = \begin{cases}
	c_i & \text{if } j = i \\
	b_{i+1} & \text{if } j \equiv i + 1 \pmod n \\
	a_{i+1} & \text{if } j \equiv i + 2 \pmod n
	\end{cases}$$
	
	with the remaining cells empty (see Figure \ref{3in8diag}). Then the $i$th row of $A$ contains $c_i$, $b_{i+1}$ and $a_{i+1}$, which add to 0. The $j$th column contains $c_j$, $b_j$, and $a_{j-1}$ (letting $a_0 = a_n$), which also add to 0. The array $A$ thus contains three consecutive diagonals filled with the integers in $[-\frac{3n}{2}, -1]$ and $[1, \frac{3n}{2}]$ such that the sum of the three integers in each row and in each column is zero.
\end{proof}
	
	\begin{figure}[ht]
		$$\begin{array}{|c|c|c|c|c|c|c|c|}
		\hline
		3 & -8 & & & & & & 5 \\ \hline
		-7 & 6 & 1 & & & & & \\ \hline
		& 2 & 9 & -11 & & & & \\ \hline
		& & -10 & 12 & -2 & & & \\ \hline
		& & & -1 & -9 & 10 & & \\ \hline
		& & & & 11 & -6 & -5 & \\ \hline
		& & & & & -4 & -3 & 7 \\ \hline
		4 & & & & & & 8 & -12 \\ \hline
		\end{array}$$
		
		\caption{A diagonal $SMS(8;3)$, using the method of Lemma \ref{3in0mod4diag}.}
		\label{3in8diag}
	\end{figure}

\begin{lem}
	\label{5in0mod4diag}
	Let $n = 4k$ with $k > 1$. Then there exists a diagonal $SMS(n;5)$.
\end{lem}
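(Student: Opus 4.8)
The plan is to give an explicit construction in the style of Lemma \ref{3in0mod4diag}. I would define several finite sequences --- most naturally five of them, say $a_i,b_i,c_i,d_i,e_i$ for $i\in[1,n]$ with $n=4k$ --- given by piecewise formulas depending on the residue of $i$ modulo a small integer and on whether $i$ is less than $2k$, greater than $2k$, or equal to $n$, together with a few relations among them (the analogue of $c_i=a_i+1$ in Lemma \ref{3in0mod4diag}) so that only a handful of families are genuinely free. These sequences should be chosen so that their values, taken together, run through each integer in $[-\frac{5n}{2},-1]\cup[1,\frac{5n}{2}]=[-10k,-1]\cup[1,10k]$ exactly once; disjointness of the families would be forced by giving them distinct residues modulo $5$ (or modulo a product of small primes), and surjectivity onto the interval then follows from a cardinality count, exactly as at the end of Lemma \ref{3in0mod4diag}. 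Note that the hypothesis $k>1$, hence $n\ge 8>5=t$, is what makes a $5$-diagonal square of order $n$ possible at all, which is why the case $k=1$ is excluded.

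Next I would place these sequences on five consecutive diagonals of an $n\times n$ array $A$: the cell $(i,j)$ is filled precisely when $j\equiv i+\ell\pmod n$ for $\ell\in\{0,1,2,3,4\}$, and the entry on diagonal $\ell$ in row $i$ is an index-shifted term of one of the five families, with the shifts chosen so that the sum over a column reduces to the same algebraic identity as the sum over a row (this is the role played by the passage from $b_i,a_i$ to $b_{i+1},a_{i+1}$ in Lemma \ref{3in0mod4diag}, and for five diagonals it should exploit a symmetry pairing diagonal $0$ with diagonal $4$ and diagonal $1$ with diagonal $3$). The verification then splits into three parts, each a short case analysis in $i$ or $j$: that $A$ uses each required integer exactly once; that every row sums to zero; and that every column sums to zero. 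The diagonal condition itself is immediate once the placement rule is written down, since the row-minus-column index differences all lie in a block of five consecutive residues. I would finish with a worked $n=8$ example, as in Figure \ref{3in8diag}. I should also remark why one cannot simply adjoin two more diagonals to the diagonal $SMS(n;3)$ of Lemma \ref{3in0mod4diag}: a standalone two-diagonal zero-sum array forces its entries to repeat with period $\gcd$ of the diagonal offsets, so it cannot have distinct entries, and the five diagonals must genuinely interact.

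The step I expect to be the main obstacle is pinning down the five piecewise formulas: they must simultaneously tile $[-10k,-1]\cup[1,10k]$ with no overlaps or gaps and satisfy the sum identities needed for both the rows and the columns on the same shifted indices, and --- exactly as in Lemma \ref{3in0mod4diag} --- the ``wrap-around'' correction terms at $i=2k$ and $i=n$ have to be tuned by hand so that the boundary rows and columns also balance. Once a correct family of formulas is in place, the remaining work is routine arithmetic verification.
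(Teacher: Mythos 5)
Your strategy is exactly the one the paper uses: five sequences $a_i,b_i,c_i,d_i,e_i$ distinguished by their residues modulo $5$, two of them obtained from the others by adding constants (the paper takes $d_i=b_i+3$ and $e_i=a_i+1$), placed on five consecutive diagonals via $a_{i,i+2}=a_i$, $a_{i,i+1}=b_i$, $a_{i,i}=c_i$, $a_{i+1,i}=d_i$, $a_{i+2,i}=e_i$, so that both the row sums and the column sums reduce to the single family of identities $S_i=a_{i-2}+b_{i-1}+c_i+d_i+e_i=0$. Your remark about pairing the outer diagonals is also the correct mechanism: the constants $+1$ and $+3$ in the relations $e_i=a_i+1$, $d_i=b_i+3$ are precisely what make the row sum $S'_i=a_i+b_i+c_i+d_{i-1}+e_{i-2}$ telescope back to $S_i$.

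However, the proposal stops exactly where the proof begins. The entire mathematical content of this lemma is the explicit piecewise definition of the three free sequences and the verification that (i) their union with the two shifted copies tiles $[-10k,-1]\cup[1,10k]$ exactly once and (ii) $S_i=0$ for every $i\in[1,n]$, including the boundary cases near $i=1,2$, $i\approx 2k$, and $i\approx 4k$. You acknowledge this yourself ("the step I expect to be the main obstacle is pinning down the five piecewise formulas"), but a proof that defers its only nontrivial step is not a proof: there is no a priori guarantee that sequences satisfying both the tiling condition and the roughly dozen case-by-case sum identities exist, and exhibiting them is what the lemma asserts. In the paper this takes the form of a four-case formula for $a_i$ (split by $i$ modulo $4$ and by whether $i$ exceeds $2k$, with special values at $i=4k-3,4k-2,4k$), a five-case formula for $b_i$, a three-case formula for $c_i$, followed by a coverage argument organized by residue of $x$ modulo $5$ and then thirteen separate evaluations of $S_i$. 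Until you supply formulas and carry out that arithmetic (or give some structural argument that replaces it), the proposal is an accurate description of the shape of the proof rather than a proof.
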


\begin{proof}
	We  define  five finite sequences $a_i, b_i, c_i, d_i, e_i$, with $i \in [1, n]$, which together contain every integer in $[-\frac{5n}{2}, -1]\cup [1, \frac{5n}{2}]$ exactly once.
	$$a_i = \begin{cases}
	-10j - 18 & \text{if } i = 4j+1 \text{ and } j < k - 1 \\
	-8 & \text{if } i = 4k-3 \\
	-10j - 13 & \text{if } i = 4j+2 \text{ and } j < k - 1 \\
	-3 & \text{if } i = 4k-2 \\
	10k - 10j - 3 & \text{if } i = 4j+3 \\
	10k - 10j - 18 & \text{if } i = 4j+4 \text{ and } j < k - 1 \\
	10k - 8 & \text{if } i = 4k
	\end{cases}$$
	$$b_i = \begin{cases}
	-5k - 5j - 4 & \text{if } i = 2j+1 \text{ and } j < k \\
	15k - 5j - 4 & \text{if } i = 2j+1 \text{ and } k \leq j < 2k-2 \\
	-5k + 6 & \text{if } i = 4k-3 \\
	-5k + 1 & \text{if } i = 4k-1 \\
	-5k + 5j + 11 & \text{if } i = 2j+2,
	\end{cases}$$
	$$c_i = \begin{cases}
	5i & \text{if } i \leq 2k \\
	-20k + 5i & \text{if } 2k < i < 4k \\
	-10k & \text{if } i = 4k,
	\end{cases}$$
	$d_i = b_i + 3$ and
    $e_i = a_i + 1.$
	
	First, we must prove that these sequences, together, contain every number in the specified range. Let $x$ be an integer with $|x| \in [1, 10k]=[1, \frac{5n}{2}]$.
	
	If $x = 5y$ for integer $y > 0$, we have $c_y = x$.
	
	If $x = -5y$ for integer $y > 0$, either $y = 2k$ or $y < 2k$. In the former case, $x = c_{4k}$. In the latter case, $x = c_i$ where $i = 4k - y$.
	
	If $x = 5y - 3$ for odd $y$ such that $-2k < y < -1$, then $x = a_i$ where $i = -2y - 5$.
	
	If $x = 5y - 3$ for $y = -1$, then $x = a_{4k-3}$.
	
	If $x = 5y - 3$ for odd $y$ such that $-1 < y < 2k-1$, then $x = a_i$ where $i = 4k - 2y - 2$.
	
	If $x = 5y - 3$ for $y = 2k - 1$, then $x = a_{4k}$.
	
	If $x = 5y - 3$ for even negative $y$, then $x = a_i$ where $i = -2y - 2$.
	
	If $x = 5y - 3$ for $y = 0$, then $x = a_{4k-2}$.
	
	If $x = 5y - 3$ for even positive $y$, then $x = a_i$ where $i = 4k - 2y + 3$.
	
	If $x = 5y - 2$, then there is an $i$ such that $x - 1 = a_i$, per the above. Then $x = e_i$.
	
	If $x = 5y - 4$ for $-2k < y \leq -k$, then $x = b_i$ where $i = -2k - 2y + 1$.
	
	If $x = 5y - 4$ for $y = -k + 1$, then $x = b_{4k-1}$.
	
	If $x = 5y - 4$ for $y = -k + 2$, then $x = b_{4k-3}$.
	
	If $x = 5y - 4$ for $-k + 3 \leq y \leq k + 2$, then $x = b_i$ where $i = 2k + 2y - 4$.
	
	If $x = 5y - 4$ for $k + 3 \leq y \leq 2k$, then $x = b_i$ where $i = 6k - 2y + 1$.
	
	If $x = 5y - 1$, then there is an $i$ such that $x - 3 = b_i$, per the above. Then $x = d_i$.
	
	Note that $\{a_i\}$, $\{b_i\}$, $\{c_i\}$, $\{d_i\}$, and $\{e_i\}$ are all disjoint, because $a_i \equiv 2 \pmod 5$, $b_i \equiv 1 \pmod 5$, $c_i \equiv 0 \pmod 5$, $d_i \equiv -1 \pmod 5$, and $e_i \equiv -2 \pmod 5$ for all $i$.
	
	As $\{a_i\} \cup \{b_i\} \cup \{c_i\} \cup \{d_i\} \cup \{e_i\} \supseteq [-10k, -1] \cup [1, 10k]$, and these sets both have cardinality $20k$, it follows that they are the same set, so $a_i, b_i, c_i, d_i, e_i$ contain between them every integer in $[-10k, -1] \cup [1, 10k]$ exactly once.
	
	For convenience, the subscripts of these sequences will be treated as elements of $\mathbb{Z}_n$. For example, the notation $a_{n+7}$ will refer to $a_7$.
	
	Now consider the expression $S_i = a_{i-2} + b_{i-1} + c_i + d_i + e_i$. We will compute the value of this expression for all $i \in [1, n]$.
	
	If $i = 1$, then $S_i = a_{i-2} + b_{i-1} + c_i + d_i + e_i = 7 + 5k + 6 + 5 - 5k - 4 + 3 - 18 + 1 = 0$.
	
	If $i = 2$, then $S_i = a_{i-2} + b_{i-1} + c_i + d_i + e_i = 10k - 8 - 5k - 4 + 10 - 5k + 11 + 3 - 13 + 1 = 0$.
	
	If $i = 4j + 3$ with $j \geq 0$ and $i \leq 2k$, then $S_i = a_{i-2} + b_{i-1} + c_i + d_i + e_i = -10j - 18 - 5k + 10j + 11 + 20j + 15 - 5k - 5(2j + 1) - 4 + 3 + 10k - 10j - 3 + 1 = 0$.
	
	If $i = 4j + 4$ with $j \geq 0$ and $i \leq 2k$, then $S_i = a_{i-2} + b_{i-1} + c_i + d_i + e_i = -10j - 13 - 5k - 5(2j + 1) - 4 + 20j + 20 - 5k + 5(2j + 1) + 11 + 3 + 10k - 10j - 18 + 1 = 0$.
	
	If $i = 4j + 1$ with $j > 0$ and $i \leq 2k$, then $S_i = a_{i-2} + b_{i-1} + c_i + d_i + e_i = 10k - 10(j - 1) - 3 - 5k + 5(2j - 1) + 11 + 20j + 5 - 5k - 10j - 4 + 3 - 10j - 18 + 1 = 0$.
	
	If $i = 4j + 2$ with $j > 0$ and $i \leq 2k$, then $S_i = a_{i-2} + b_{i-1} + c_i + d_i + e_i = 10k - 10(j - 1) - 18 - 5k - 10j - 4 + 20j + 10 - 5k + 10j + 11 + 3 - 10j - 13 + 1 = 0$.
	
	If $i = 4j + 1 > 2k$ with $i < 4k - 3$, then $S_i = a_{i-2} + b_{i-1} + c_i + d_i + e_i = 10k - 10(j - 1) - 3 - 5k + 5(2j - 1) + 11 - 20k + 20j + 5 + 15k - 10j - 4 + 3 - 10j - 18 + 1 = 0$.
	
	If $i = 4j + 2 > 2k$ with $i < 4k - 3$, then $S_i = a_{i-2} + b_{i-1} + c_i + d_i + e_i = 10k - 10(j - 1) - 18 + 15k - 10j - 4 - 20k + 20j + 10 - 5k + 10j + 11 + 3 - 10j - 13 + 1 = 0$.
	
	If $i = 4j + 3 > 2k$ with $i < 4k - 3$, then $S_i = a_{i-2} + b_{i-1} + c_i + d_i + e_i = -10j - 18 - 5k + 10j + 11 - 20k + 20j + 15 + 15k - 5(2j + 1) - 4 + 3 + 10k - 10j - 3 + 1 = 0$.
	
	If $i = 4j + 4 > 2k$ with $i < 4k - 3$, then $S_i = a_{i-2} + b_{i-1} + c_i + d_i + e_i = -10j - 13 + 15k - 5(2j + 1) - 4 - 20k + 20j + 20 - 5k + 5(2j + 1) + 11 + 3 + 10k - 10j - 18 + 1 = 0$.
	
	If $i = 4k - 3$, then $S_i = a_{i-2} + b_{i-1} + c_i + d_i + e_i = 10k - 10(k - 2) - 3 - 5k + 5(2k - 3) + 11 - 20k + 20k - 15 - 5k + 6 + 3 - 8 + 1 = 0$.
	
	If $i = 4k - 2$, then $S_i = a_{i-2} + b_{i-1} + c_i + d_i + e_i = 10k - 10(k - 2) - 18 - 5k + 6 - 20k + 20k - 10 - 5k + 5(2k - 2) + 11 + 3 - 3 + 1 = 0$.
	
	If $i = 4k - 1$, then $S_i = a_{i-2} + b_{i-1} + c_i + d_i + e_i = -8 - 5k + 5(2k - 2) + 11 - 20k + 20k - 5 - 5k + 1 + 3 + 10k - 10(k - 1) - 3 + 1 = 0$.
	
	If $i = 4k$, then $S_i = a_{i-2} + b_{i-1} + c_i + d_i + e_i = -3 - 5k + 1 - 10k - 5k + 5(2k - 1) + 11 + 3 + 10k - 8 + 1 = 0$.
	
	So $S_i = 0$ for all $i \in [1, n]$.
	
	Now consider $S'_i = a_i + b_i + c_i + d_{i-1} + e_{i-2}$. From the definitions of $d_i$ and $e_i$, we see that $S'_i = e_i - 1 + d_i - 3 + c_i + b_{i-1} + 3 + a_{i-2} + 1 = e_i + d_i + c_i + b_{i-1} + a_{i-2} = S_i = 0$.
	
	We will now define an $n \times n$ square array $A=[a_{i,j}]$ in which we fill five consecutive diagonals. Again, the indices of $A$ will be considered as elements of $\mathbb{Z}_n$. For $i \in [1, n]$, we let
	$a_{i, i+2} = a_i$,
	$a_{i, i+1} = b_i$,
	$a_{i, i} = c_i$,
	$a_{i+1, i} = d_i$ and
	$a_{i+2, i} = e_i$,
	with the other cells empty (see Figure \ref{5in8diag}).
	
	Clearly this fills precisely five adjacent diagonals of $A$ with the elements in $[-10k, -1] \cup [1, 10k]$.
	Now in row $i$ of $A$, the five cells filled are $a_{i, i+2} = a_i$, $a_{i, i+1} = b_i$, $a_{i, i} = c_i$, $a_{i, i-1} = d_{i-1}$, and $a_{i, i-2} = e_{i-2}$. The sum of these five cells is $S'_i = 0$.
	In column $i$ of $A$, the five cells filled are $a_{i+2, i} = e_i$, $a_{i+1, i} = d_i$, $a_{i, i} = c_i$, $a_{i-1, i} = b_{i-1}$, and $a_{i-2, i} = a_{i-2}$. The sum of these five cells is $S_i = 0$.
	Therefore $A$ is a diagonal $SMS(n;5),$ where $n\equiv 0 \pmod 4$ and $n\geq 8$.
\end{proof}	

	\begin{figure}[b]
		$$\begin{array}{|c|c|c|c|c|c|c|c|}
		\hline
		5 & 6 & -13 & & & & -17 & 19 \\ \hline
		9 & 10 & -19 & -8 & & & & 8 \\ \hline
		-12 & -16 & 15 & 1 & 12 & & & \\ \hline
		& -7 & 4 & 20 & -14 & -3 & & \\ \hline
		& & 13 & -11 & -15 & -4 & 17 & \\ \hline
		& & & -2 & -1 & -10 & 11 & 2 \\ \hline
		-18 & & & & 18 & 14 & -5 & -9 \\ \hline
		16 & 7 & & & & 3 & -6 & -20 \\ \hline
		\end{array}$$
		
		\caption{A diagonal $SMS(8;5)$, using the method of Lemma \ref{5in0mod4diag}.}
		\label{5in8diag}
	\end{figure}

\begin{lem}
	\label{5in2mod4diag}
Let $n = 4k+2$ with $k \geq 1$. Then there exists a diagonal $SMS(n;5)$.
\end{lem}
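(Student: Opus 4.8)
The plan is to imitate the proof of Lemma~\ref{5in0mod4diag} almost verbatim, changing only the explicit piecewise formulas to account for $n \equiv 2 \pmod 4$. As there, I would place all $5n$ entries of the square on five consecutive diagonals, described by five cyclic sequences $a_i,b_i,c_i,d_i,e_i$ with $i\in[1,n]$ (subscripts read modulo $n$), with $d_i=b_i+\delta$ and $e_i=a_i+\varepsilon$ for two fixed integer constants, and with $a_i\equiv 2$, $b_i\equiv 1$, $c_i\equiv 0$, $d_i\equiv -1$, $e_i\equiv -2 \pmod 5$, so that the five sequences are automatically pairwise disjoint.

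First I would write down the sequences explicitly. The sequence of multiples of five is the easy one: $c_i=5i$ for $i\le n/2$, $c_i=5i-5n$ for $n/2<i<n$, and $c_n=-5n/2$; the only structural change from Lemma~\ref{5in0mod4diag} is that the ``turning point'' $n/2$ is now the odd integer $2k+1$ rather than an even integer, and this parity shift propagates into the piecewise descriptions of $a_i$ and $b_i$. I would then give $a_i$ and $b_i$ as piecewise-linear functions of $i$, broken at indices lying in fixed residue classes modulo $4$ near $0$, near $n/2$, and near $n$, tuned so that (i) as $i$ runs over $[1,n]$ the union $\{a_i\}\cup\{b_i\}\cup\{c_i\}\cup\{d_i\}\cup\{e_i\}$ is exactly $[-\tfrac{5n}{2},-1]\cup[1,\tfrac{5n}{2}]$ with no repetitions, and (ii) the cyclic summation identity below holds. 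Condition (i) would be verified as in Lemma~\ref{5in0mod4diag}: for each residue of the target value modulo $5$ and each sign, exhibit the index producing it; since $\{a_i\}\cup\cdots\cup\{e_i\}\supseteq[-\tfrac{5n}{2},-1]\cup[1,\tfrac{5n}{2}]$ and both sets have cardinality $5n$, they coincide.

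Next I would prove the identity $S_i:=a_{i-2}+b_{i-1}+c_i+d_i+e_i=0$ for all $i\in[1,n]$. Because $d_i=b_i+\delta$ and $e_i=a_i+\varepsilon$, the companion quantity $S'_i:=a_i+b_i+c_i+d_{i-1}+e_{i-2}$ reduces to the same expression $a_i+a_{i-2}+b_i+b_{i-1}+c_i+\delta+\varepsilon$ as $S_i$, so $S'_i=S_i$ and it suffices to check $S_i=0$. This is a finite case analysis, organized by the residue of $i$ modulo $4$ together with the handful of boundary indices near $1$, $2$, $n/2$, and $n$, entirely parallel to the list of computations in Lemma~\ref{5in0mod4diag}. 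Then I would assemble the array $A=[a_{i,j}]$ by setting $a_{i,i+2}=a_i$, $a_{i,i+1}=b_i$, $a_{i,i}=c_i$, $a_{i+1,i}=d_i$, $a_{i+2,i}=e_i$ (all indices modulo $n$), and leaving every other cell empty. This fills precisely five consecutive diagonals with the required integers; row $i$ then contains $a_i+b_i+c_i+d_{i-1}+e_{i-2}=S'_i=0$ and column $j$ contains $a_{j-2}+b_{j-1}+c_j+d_j+e_j=S_j=0$, so $A$ is a diagonal $SMS(n;5)$.

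I expect the entire difficulty to be bookkeeping rather than conceptual: the delicate point is choosing the breakpoints in the formulas for $a_i$ and $b_i$ so that exact coverage of $[-\tfrac{5n}{2},-1]\cup[1,\tfrac{5n}{2}]$ and the identity $S_i=0$ hold simultaneously at \emph{all} boundary indices, and, in the smallest case $k=1$ (that is, $n=6$, where several branches of the formulas describe empty index ranges and there is a real risk of an ambiguous or duplicated assignment, just as $n=4$ had to be excluded in Lemma~\ref{5in0mod4diag}), verifying that the construction does not degenerate --- and, if it does, supplying a separate explicit $6\times 6$ array. A bespoke construction of this shape seems unavoidable: a diagonal $SMS(n;5)$ cannot be obtained by overlaying a diagonal $SMS(n;3)$ with entries on two further diagonals, since for $n\equiv 2\pmod4$ the zero-sum conditions force any such two-diagonal block to be constant along its diagonals, and a simple linear embedding of an $SMA(5,n)$ (from Lemma~\ref{5xeven}) into five diagonals fails for the same structural reason.
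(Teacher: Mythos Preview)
Your proposal is correct and takes essentially the same approach as the paper: the paper likewise fills five consecutive diagonals with sequences in the residue classes $2,1,0,-1,-2 \pmod 5$, with the two lower diagonals being the translates by $+3$ and $+1$ of the two upper ones (so $\delta=3$, $\varepsilon=1$), and then checks coverage of $[-\tfrac{5n}{2},-1]\cup[1,\tfrac{5n}{2}]$ and the row/column sum identities by a case analysis over residues of $i$ and boundary indices. The paper's construction works uniformly for all $k\ge 1$, so no separate $n=6$ array is needed.
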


\begin{proof} Let $n=4k+2$. Define $A=[a_{i,j}]$ as follows.
	If $i=j$
	$$a_{i,j}= \begin{cases}
	5i & 1 \leq i \leq \frac{n}{2} \\
	-5(n-i) & \frac{n}{2}<i<n \\
	-\frac{5n}{2} & i=n.\\
	\end{cases}$$
	
	If $i \equiv j-2 \pmod n$
	$$a_{i,j}= \begin{cases}
	-3-5(i-1) & i \leq \frac{n}{2} \\
	5(n-i)+2 & i> \frac{n}{2}.
	\end{cases}$$
	
	If $i \equiv j-1 \pmod n$
	$$a_{i,j}= \begin{cases}
	-9-5k+5 \left(\frac{i-1}{2} \right) & i \text{ odd} \\
	5k+1+5\left(\frac{i-2}{2}\right) & i \text{ even and } i < 2k+4\\
	5k+1+5\left(\frac{i-2}{2}\right)-5n & i \text{ even and } 2k+4 \leq i<n\\
	\frac{5n-26}{4} & i=n.
	\end{cases}$$
	
	If $i \equiv j+1 \pmod n,$ then $a_{i,j}= a_{i-1,j+1}+3$;
    if $i \equiv j+2 \pmod n,$ then
	$a_{i,j}= a_{i-2,j+2}+1,$
	where for convenience we define $a_{i+n,j}=a_{i,j}$ and $a_{i,j+n}=a_{i,j}$. Let all other cells be blank.

	Now, for each diagonal, or congruence class of $i-j \pmod n$, this function is defined on $n$ positive integer values for $i$. Hence, the function as a whole has a range $R$ with $|R| \leq 5n$. Consequently, it suffices to show that for every $x \in [-\frac{5n}{2},-1] \cup [1,\frac{5n}{2}]$, there exists an $a_{i,j}$ with $a_{i,j}=x$ as according to the function defined above.

	Let $x \in [-\frac{5n}{2},-1] \cup [1,\frac{5n}{2}]$ be arbitrary. If $x \equiv 0 \pmod 5$, then either $x=5\ell$ or $x=-5 \ell$ for some $\ell \in [1,\frac{n}{2}]$. If $x>0,$ let $i=j=\ell$. If $-\frac{5n}{2}<x<0$, then let $i=j=n-\ell$. Finally, if $x=-\frac{5n}{2}$, let $i=j=n$.
	
	If $x \equiv 1 \pmod 5$, we consider some cases. If $-9-5k\leq x<0$, let $i=\frac{2(x+9+5k)+5}{5}$. If $x < -9k-5$, let $i= \frac{2(x+5n-5k)+8}{5}$. If $0<x<5k+1$ and $x \neq \frac{5n-26}{4}$, then let $i=\frac{2(x+9+5k)+5}{5}$. If $x \geq 5k+1$, then let $i=\frac{2(x-5k-1)+8}{5}$. If $x= \frac{5n-26}{4}$, let $i=n$. For all these cases, let $j \equiv i+1 \pmod n$.
	
	If $x \equiv 2 \pmod 5$, we consider some cases. If $x>0$, then let $i=n-\frac{x-2}{5}$. If $x<0$, let $i=-\left(\frac{x+3}{5}\right)+1$.
	
	It is routine to verify that all these choices of $x$ yield integer values of $i$ between $1$ and $n$, and that for the corresponding $i$ value $a_{i,j}=x$ according to the function defined above.
	
	To cover the remaining congruence classes that $x$ could assume, note that all of the terms $a_{i,j}$ satisfying $i \equiv j-1 \pmod n$ are translates of elements of the array when $i \equiv j-1$. In particular, $3$ is added to each of the entries. Hence, the fact that the function covers all values of $x \equiv 4 \pmod n$ follows from the fact that it covers all values with $x \equiv 1 \pmod n$. An analogous argument shows that the array contains all values of $x$ when $x \equiv 3 \pmod n$. We conclude that the array contains every element in $[-\frac{5n}{2},-1] \cup [1,\frac{5n}{2}]$ exactly once.
	
	Now we must show that all rows and columns in the array sum to zero. First consider the rows. We again have several cases to consider.
	
	If $i=1,$ then $\sum_{j} a_{i,j}=5(1)-3-5(1-1)-9-5k+5\left(\frac{1-1}{2}\right)+\frac{5n-26}{4}+3+5(n-(n-1))+2+1=5-3-9-4-5\left(\frac{n-2}{4}\right)+\frac{5n-26}{4}+3+5+3=5-3-9-4+3+5+3=0$.
	
	If $i=2,$ $\sum_{j} a_{i,j}= 5(2)-3-5(2-1)+5k+1+5 \left(\frac{2-2}{2}\right)-9-5k+5\left(\frac{2-2}{2}\right)+3+5(n-n)+2+1=10-3-5+1-9+3+2+1=0$.
	
	If $2<i \leq \frac{n}{2}$ and $i$ is odd, then $\sum_{j} a_{i,j}=5i-3-5(i-1)-9-5k+5 \left(\frac{i-1}{2}\right)+5k+1+5\left(\frac{i-3}{2}\right)+3-3-5(i-3)+1=-3+5-9-10+1+3-3+15+1=0$.
	
	If $2<i \leq \frac{n}{2}$ and $i$ is even, then $\sum_{j} a_{i,j}= 5i-3-5(i-1)+5k+1+5 \left(\frac{i-2}{2}\right)-9-5k+5\left(\frac{i-2}{2}\right)+3-3-5(i-3)+1=-3+5+1-10-9+3-3+15+1=0$.
	
	If $\frac{n}{2}<i<2k+5$ and $i$ is odd, then $\sum_{j} a_{i,j}=-5(n-i)+5(n-i)+2-9-5k+5\left(\frac{i-1}{2}\right)+5k+1+5\left(\frac{i-3}{2}\right)+3-3-5(i-3)+1=2-9-10+1+3-3+15+1=0$.
	
	If $\frac{n}{2}<i<2k+4$ and $i$ is even, then $\sum_{j} a_{i,j}= -5(n-i)+5(n-i)+2+5k+1+5 \left(\frac{i-2}{2}\right)-9-5k+5\left(\frac{i-2}{2}\right)+3+3-3-5(i-3)=2+1-10-9+3-3+15+1=0$.
	
	If $2k+5 \leq i< n$ and $i$ is odd, then $\sum_{j}a_{i,j}=-5(n-i)+5(n-i)+2-9-5k+5\left(\frac{i-1}{2}\right)+5k+1+5\frac{i-3}{2}-5n+3+5(n-(i-2))+2+1=2-9-10+1+3+10+2+1=0$.
	
	If $2k+4 \leq i< n$ and $i$ is even, then $\sum_{j} a_{i,j}= -5(n-i)+5(n-i)+2+5k+1+5 \left(\frac{i-2}{2}\right)-5n-9-5k+5\left(\frac{i-2}{2}\right)+3+5(n-(i-2))+2+1=2+1-10-9+10+1+3+2=0$.
	
	If $i=n$, then the entries in the row must sum to zero, because all the other rows sum to zero and the sum of all entries in the array is 0.
	
	Now we consider similar cases with the columns.
	
	If $j=1$, then $\sum_{i} a_{i,j}=5(1)+5(n-(n-1))+2+\frac{5n-26}{4}-9-5k+5\left(\frac{1-1}{2}\right)+3-3-5(1-1)+1=5+5+\frac{5n-26}{4}-\frac{5(n-2)}{4}-9+3-3+1=10-4-9+2+3-3+1=0$.
	
	If $j=2$, then $\sum_{i} a_{i,j}=5(2)+5(n-n)+2-9-5k+5\left(\frac{1-1}{2}\right)+5k+1+\left(\frac{2-2}{2}\right)+3-3-5(1)+1=10+2-9+3-3+1-5+1=0$.
	
	If $2<j \leq \frac{n}{2}$ and $j$ is odd, then $\sum_{i} a_{i,j}=5j-3-5(j-3)+5k+1+5\left(\frac{j-3}{2}\right)-9-5k+5\left(\frac{j-1}{2}\right)+3-3-5(j-1)+1=-3+15+1-10-9+3-3+5+1=0$.
	
	If $2<j \leq \frac{n}{2}$ and $j$ is even, then $\sum_{i} a_{i,j}=5j-3-5(j-3)-9-5k+5\left(\frac{j-2}{2}\right)+5k+1+5\left(\frac{j-2}{2}\right)+3-3-5(j-1)+1=-3+15-9+1-10+3-3+5+1=0$.
	
	If $\frac{n}{2}<j <2k+5$ and $j$ is odd, then $\sum_{i} a_{i,j}=-5(n-j)-3-5(j-3)+5k+1+5\left(\frac{j-3}{2}\right)-9-5k+5\left(\frac{j-1}{2}\right)+3+5(n-j)+2+1=-3+15+1-10-9+3+2+1=0$.
	
	If $\frac{n}{2}<j <2k+4$ and $j$ is even, then $\sum_{i} a_{i,j}=-5(n-j)-3-5(j-3)-9-5k+5\left(\frac{j-2}{2}\right)+5k+1+5\left(\frac{j-2}{2}\right)+3+5(n-j)+2+1=-3+15-9-10+1+3+2+1=0$.
	
	If $2k+5 \leq j < n$ and $j$ is odd, then $\sum_{i} a_{i,j}=-5(n-j)+5(n-(j-2))+2+5k+1+5 \left(\frac{j-3}{2}\right)-5n-9-5k+5\left(\frac{j-1}{2}\right)+3+5(n-j)+2+1=10+2+1-10-9+3+2+1=0$.
	
	If $2k+4 \leq j < n$ and $j$ is even, then $\sum_{i} a_{i,j}= -5(n-j)+5(n-(j-2))+2-9-5k+5\left(\frac{j-2}{2}\right)+5k+1+5\left(\frac{j-2}{2}\right)-5n+3+5(n-j)+2+1=10+2-9-10+1+3+2+1=0$.

	If $i=n$, then the entries in the column must sum to zero, because all the other columns sum to zero and the sum of all entries in the array is 0 (see Figure \ref{5in10diag}).
	
	We conclude that if $n \equiv 2 \pmod 4$, there exists a diagonal $SMS(n;5)$
	
	\begin{figure}[b]
		$$\begin{array}{|c|c|c|c|c|c|c|c|c|c|}
		\hline
		5 & -19 & -3 & & & & & & 8 & 9\\
		\hline
		-16 & 10 & 11 & -8 & & & & & & 3 \\
		\hline
		-2 & 14 & 15 & -14 & -13 & & & & & \\
		\hline
		& -7 & -11 & 20 & 16 & -18 & & & &  \\
		\hline
		& &-12 & 19 & 25 & -9 & -23 & & & \\
		\hline
		& & & -17 & -6 & -20 & 21 & 22 & &\\
		\hline
		& & & & -22 & 24 & -15 & -4 & 17 & \\
		\hline
		&  & & & & 23 & -1 & -10 & -24 & 12\\
		\hline
		7 & & & & & & 18 &-21 & -5 & 1\\
		\hline
		6 & 2 & & & & & & 13 & 4 & -25\\
		\hline
		\end{array}$$
		
		\caption{A diagonal $SMS(10;5)$, using the method of Lemma \ref{5in2mod4diag}.}
		\label{5in10diag}
	\end{figure}
\end{proof}

\begin{lem}
	\label{7in2mod4diag}
	Let $n = 4k+2$ with $k \geq 2$. Then there exists a diagonal $SMS(n;7)$.
\end{lem}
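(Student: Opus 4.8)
\emph{Proof proposal.} The plan is to follow the template already used for Lemmas \ref{3in0mod4diag}, \ref{5in0mod4diag}, and (most closely) \ref{5in2mod4diag}: construct seven finite sequences of length $n$ that together list the $7n$ integers of $[-\tfrac{7n}{2},-1]\cup[1,\tfrac{7n}{2}]$, one sequence for each residue class modulo $7$, and then lay these sequences along seven consecutive diagonals of an $n\times n$ array so that each row and each column picks up exactly one term of each sequence. It is worth remarking first why a from-scratch construction is needed: Lemma \ref{add4tot} would give a diagonal $SMS(n;7)$ from a diagonal $SMS(n;3)$, but Lemma \ref{3in0mod4diag} only supplies the latter when $n\equiv 0\pmod 4$, and no diagonal $SMS(n;3)$ for $n\equiv 2\pmod4$ is available in the preceding results. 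Once this lemma is in hand, all larger odd $t\equiv 3\pmod 4$ with $n\equiv 2\pmod4$ follow by repeated application of Lemma \ref{add4tot}.

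Concretely, I would introduce three ``base'' sequences $a_i,b_i,c_i$ for $i\in[1,n]$, each given by a piecewise formula that is linear in $i$ on each piece, the pieces being cut out by $i\le \tfrac n2$ versus $i>\tfrac n2$ (to separate the positive and negative halves of the target set), by the parity of $i$, by the residue of $i$ modulo $4$ (since $n=4k+2$), and with a small number of exceptional single values for $i$ near $1$, $\tfrac n2$, and $n$ to absorb the cyclic wrap-around; the remaining four sequences $d_i,e_i,f_i,g_i$ would be fixed integer translates of $a_i$ and $b_i$, in the spirit of $d_i=b_i+3,\ e_i=a_i+1$ from Lemma \ref{5in0mod4diag}, chosen so that the seven sequences land in the residue classes $\{0,\pm1,\pm2,\pm3\}\pmod 7$. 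The first verification is that the seven sequences partition $[-\tfrac{7n}{2},-1]\cup[1,\tfrac{7n}{2}]$: since they occupy distinct residue classes mod $7$ they are automatically pairwise disjoint, so one only checks, residue class by residue class and then by sign, parity, and position of the argument, that every integer of the target set is attained, and finishes with a cardinality count ($7n$ on each side).

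The second and more delicate verification is that the rows and columns sum to zero. I would place the $i$th terms so that the filled cells in row $i$ are in columns $i-3,\dots,i+3$ (indices read cyclically mod $n$), exactly as in Lemma \ref{5in0mod4diag} but with seven diagonals in place of five; then each row of the array collects the septuple $\{a_i,b_i,c_i,d_{i-1},e_{i-2},\dots\}$ and each column collects a shifted septuple, and one must show both sums vanish identically in $i$. As in the $t=5$ case, the translate relations among $d_i,e_i,f_i,g_i$ let one reduce the row-sum identity to the column-sum identity up to a constant that telescopes to zero, after which the column sum $S_i=0$ is checked by a case analysis on $i$ over the same regime boundaries used to define the sequences. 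One also confirms that the row$-$column index differences over all filled cells sweep out exactly $7$ consecutive residues mod $n$, so the array is genuinely diagonal.

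The main obstacle is purely the bookkeeping. With seven piecewise sequences, each with on the order of half a dozen pieces, both the exact-cover proof and the $S_i=0$ proof fragment into many cases: a family of arithmetic-progression cases for generic $i$ in each regime, together with a handful of exceptional cases at the wrap-around points $i\in\{1,2,\tfrac n2,\tfrac n2+1,n-1,n\}$. Each case is a routine linear identity in $k$, but care is needed to make sure that the exceptional ``fixes'' near the wrap-around (analogous to the bolded cells in the $n=10$ example of the $7$-diagonal $SMS$) do not disturb any sum already checked, and that no piece overlaps another in the covering argument. Once all these identities are verified, the resulting array $[a_{i,j}]$ is the desired diagonal $SMS(n;7)$, and an explicit small instance (for example $n=10$) can be exhibited for illustration.
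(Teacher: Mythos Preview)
Your proposal is essentially the paper's own approach: seven sequences in the seven residue classes modulo $7$, laid along seven consecutive diagonals, with the row/column sums reduced to a single identity $S_i=0$ via the translate relations. Two small corrections worth flagging: the paper uses \emph{four} base sequences and \emph{three} translates ($e_i=c_i+5$, $f_i=b_i+3$, $g_i=a_i+1$), since the residue class $0\pmod 7$ needs its own base sequence $d_i$ with no partner---your ``three base, four translates'' count is off by one; and the paper's construction here needs no post-hoc cell swaps, so the ``exceptional fixes analogous to the bolded cells'' you mention (which belong to the $7$-diagonal $SMS(n;6)$ of Lemma~\ref{6in2mod4diag}, a different construction) are unnecessary.
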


\begin{proof}
	We  define  seven finite sequences $a_i, b_i, c_i, d_i, e_i, f_i, g_i$, with $i \in [1, n]$, which together contain every integer in the required set $X = [-\frac{7n}{2}, -1] \cup [1, \frac{7n}{2}]$ exactly once.
	
	$$a_i = \begin{cases}
	-7i + 3 & \text{if } i \leq 2k+1 \\
	28k - 7i + 17 & \text{if } i > 2k+1,
	\end{cases}$$
	
	$$b_i = \begin{cases}
	7i - 12 & \text{if } i \leq 2k+2 \\
	-28k + 7i - 26 & \text{if } i > 2k+2,
	\end{cases}$$
	
	$$c_i = \begin{cases}
	7k - 7j + 1 & \text{if } i = 2j \text{ and } i < 4k+2 \\
	7k + 1 & \text{if } i = 4k+2 \\
	-7k - 7j - 6 & \text{if } i = 2j+1 \text{ and } i \leq 2k+1 \\
	21k - 7j + 8 & \text{if } i = 2j+1 \text{ and } i > 2k+1,
	\end{cases}$$
	
	$$d_i = \begin{cases}
	7i & \text{if } i \leq 2k+1 \\
	-28k + 7i - 14 & \text{if } 2k+1 < i < 4k+2 \\
	-14k - 7 & \text{if } i = 4k+2,
	\end{cases}$$
	$e_i = c_i + 5$,
	$f_i = b_i + 3$, and
	$g_i = a_i + 1.$
	
	First, we must prove that these sequences, together, contain every number in the specified range. Let $x \in X$.
	
	If $x = 7y$ for integer $y > 0$, we have $c_y = x$.
	
	If $x = -7y$ for integer $y > 0$, either $y = 2k+1$ or $y < 2k+1$. In the former case, $x = d_{4k+2}$. In the latter case, $x = d_i$ where $i = 4k + 2 - y$.
	
	If $x = 7y+1$ for integer $y$ such that $y < -k$, then $x = c_i$ where $i = -2k - 2y - 1$.
	
	If $x = 7y+1$ for integer $y$ such that $-k \leq y < k$, then $x = c_i$ where $i = 2k - 2y + 1$.
	
	If $x = 7k+1$, then $x = c_{4k+2}$.
	
	If $x = 7y+1$ for integer $y$ such that $k < y$, then $x = c_i$ where $i = 6k - 2y + 1$.
	
	If $x = 7y+2$ for integer $y$ such that $y < -1$, then $x = b_i$ where $i = 4k + y + 4$.
	
	If $x = 7y+2$ for integer $y$ such that $y \geq -1$, then $x = b_i$ where $i = y + 2$.
	
	If $x = 7y+3$ for integer $y$ such that $y < 0$, then $x = a_i$ where $i = -y$.
	
	If $x = 7y+3$ for integer $y$ such that $y \geq 0$, then $x = a_i$ where $i = 4k - y + 2$.
	
	If $x = 7y+4$ for integer $y$, then there exists an $i$ such that $x - 1 = a_i$, and $x = g_i$.
	
	If $x = 7y+5$ for integer $y$, then there exists an $i$ such that $x - 3 = b_i$, and $x = f_i$.
	
	If $x = 7y+6$ for integer $y$, then there exists an $i$ such that $x - 5 = c_i$, and $x = e_i$.
	
	Note that $\{a_i\}$, $\{b_i\}$, $\{c_i\}$, $\{d_i\}$, $\{e_i\}$, $\{f_i\}$, and $\{g_i\}$ are all disjoint, because $a_i \equiv 3 \pmod 7$, $b_i \equiv 2 \pmod 7$, $c_i \equiv 1 \pmod 7$, $d_i \equiv 0 \pmod 7$, $e_i \equiv -1 \pmod 7$, $f_i \equiv -2 \pmod 7$, and $g_i \equiv -3 \pmod 7$ for all $i$.
	As $\{a_i\} \cup \{b_i\} \cup \{c_i\} \cup \{d_i\} \cup \{e_i\} \cup \{f_i\} \cup \{g_i\} \supseteq X$, and these sets both have cardinality $28k + 14$, it follows that they are the same set, so $a_i, b_i, c_i, d_i, e_i, f, g$ contain between them every integer in $X$ exactly once.
	
	For convenience, the subscripts of these sequences will be treated as elements of $\mathbb{Z}_n$. For example, the notation $a_{n+7}$ will refer to $a_7$.
	Now consider the expression $S_i = a_{i-3} + b_{i-2} + c_{i-1} + d_i + e_i + f_i + g_i$. We will compute the value of this expression for all $i \in [1, n]$.
	
	If $i = 1$, then $S_i = a_{i-3} + b_{i-2} + c_{i-1} + d_i + e_i + f_i + g_i = 28k - 7(4k) + 17 - 28k + 7(4k+1) - 26 + 7k + 1 + 7 - 7k - 6 + 5 + 7 - 12 + 3 - 7 + 3 + 1 = 0$.
	
	If $i = 2$, then $S_i = a_{i-3} + b_{i-2} + c_{i-1} + d_i + e_i + f_i + g_i = 28k - 7(4k+1) + 17 - 28k + 7(4k+2) - 26 - 7k - 6 + 14 + 7k - 7 + 1 + 5 + 14 - 12 + 3 - 14 + 3 + 1 = 0$.
	
	If $i = 3$, then $S_i = a_{i-3} + b_{i-2} + c_{i-1} + d_i + e_i + f_i + g_i = 28k - 7(4k+2) + 17 + 7 - 12 + 7k - 7 + 1 + 21 - 7k - 7 - 6 + 5 + 21 - 12 + 3 - 21 + 3 + 1 = 0$.
	
	If $i = 2j \leq 2k+1$, then $S_i = a_{i-3} + b_{i-2} + c_{i-1} + d_i + e_i + f_i + g_i = -7(2j-3) + 3 + 7(2j-2) - 12 - 7k - 7(j-1) - 6 + 7(2j) + 7k - 7j + 1 + 5 + 7(2j) - 12 + 3 - 7(2j) + 3 + 1 = 0$.
	
	If $i = 2j+1 \leq 2k+1$, then $S_i = a_{i-3} + b_{i-2} + c_{i-1} + d_i + e_i + f_i + g_i = -7(2j-2) + 3 + 7(2j-1) - 12 + 7k - 7j + 1 + 7(2j+1) - 7k - 7j - 6 + 5 + 7(2j+1) - 12 + 3 - 7(2j+1) + 3 + 1 = 0$.
	
	If $i = 2k+2$, then $S_i = a_{i-3} + b_{i-2} + c_{i-1} + d_i + e_i + f_i + g_i = -7(2k-1) + 3 + 7(2k) - 12 - 7k - 7k - 6 - 28k + 7(2k+2) - 14 + 7k - 7(k+1) + 1 + 5 + 7(2k+2) - 12 + 3 + 28k - 7(2k+2) + 17 + 1 = 0$.
	
	If $i = 2k+3$, then $S_i = a_{i-3} + b_{i-2} + c_{i-1} + d_i + e_i + f_i + g_i = -7(2k) + 3 + 7(2k+1) - 12 + 7k - 7(k+1) + 1 - 28k + 7(2k+3) - 14 + 21k - 7(k+1) + 8 + 5 - 28k + 7(2k+3) - 26 + 3 + 28k - 7(2k+3) + 17 + 1 = 0$.
	
	If $i = 2k+4$, then $S_i = a_{i-3} + b_{i-2} + c_{i-1} + d_i + e_i + f_i + g_i = -7(2k+1) + 3 + 7(2k+2) - 12 + 21k - 7(k+1) + 8 - 28k + 7(2k+4) - 14 + 7k - 7(k+2) + 1 + 5 - 28k + 7(2k+4) - 26 + 3 + 28k - 7(2k+4) + 17 + 1 = 0$.
	
	If $i = 2j+1$ with $2k+4 < i < 4k+2$, then $S_i = a_{i-3} + b_{i-2} + c_{i-1} + d_i + e_i + f_i + g_i = 28k - 7(2j-2) + 17 - 28k + 7(2j-1) - 26 + 7k - 7j + 1 - 28k + 7(2j+1) - 14 + 21k - 7j + 8 + 5 - 28k + 7(2j+1) - 26 + 3 + 28k - 7(2j+1) + 17 + 1 = 0$.
	
	If $i = 2j$ with $2k+4 < i < 4k+2$, then $S_i = a_{i-3} + b_{i-2} + c_{i-1} + d_i + e_i + f_i + g_i = 28k - 7(2j-3) + 17 - 28k + 7(2j-2) - 26 + 21k - 7(j-1) + 8 - 28k + 7(2j) - 14 + 7k - 7j + 1 + 5 - 28k + 7(2j) - 26 + 3 + 28k - 7(2j) + 17 + 1 = 0$.
	
	If $i = 4k+2$, then $S_i = a_{i-3} + b_{i-2} + c_{i-1} + d_i + e_i + f_i + g_i = 28k - 7(4k-1) + 17 - 28k + 7(4k) - 26 + 21k - 7(2k) + 8 - 14k - 7 + 7k + 1 + 5 - 28k + 7(4k+2) - 26 + 3 + 28k - 7(4k+2) + 17 + 1 = 0$.
	
	So $S_i = 0$ for all $i \in [1, n]$.
	
	Now consider $S'_i = a_i + b_i + c_i + d_i + e_{i-1} + f_{i-2} + g_{i-3}$. From the definitions of $e_i$, $f_i$, and $g_i$, we see that $S'_i = g_i - 1 + f_i - 3 + e_i - 5 + d_i + c_{i-1} + 5 + b_{i-2} + 3 + a_{i-3} + 1 = g_i + f_i + e_i + d_i + c_{i-1} + b_{i-2} + a_{i-3} = S_i = 0$.
	
	We will now define an $n \times n$ square array $A=[a_{i,j}]$ in which we fill seven consecutive diagonals. Again, the indices of $a$ will be considered as elements of $\mathbb{Z}_n$. For $i \in [1, n]$, we let
	$a_{i, i+3} = a_i,$
	$a_{i, i+2} = b_i,$
	$a_{i, i+1} = c_i,$
	$a_{i, i} = d_i,$
	$a_{i+1, i} = e_i,$
	$a_{i+2, i} = f_i,$ and
	$a_{i+3, i} = g_i$
	with the other cells empty.
Clearly this fills precisely seven adjacent diagonals of $A$ with the elements in $X$ (see Figure \ref{7in10diag}).
	
	Now in row $i$ of $A$, the seven cells filled are $a_{i, i+3} = a_i$, $a_{i, i+2} = b_i$, $a_{i, i+1} = c_i$, $A_{i, i} = d_i$, $a_{i, i-1} = e_{i-1}$, $a_{i, i-2} = f_{i-2}$, and $a_{i, i-3} = g_{i-3}$. The sum of these seven cells is $S'_i = 0$.
	In column $i$ of $A$, the seven cells filled are $a_{i+3, i} = g_i$, $a_{i+2, i} = f_i$, $a_{i+1, i} = e_i$, $a_{i, i} = d_i$, $a_{i-1, i} = c_{i-1}$, $a_{i-2, i} = b_{i-2}$, and $a_{i-3, i} = a_{i-3}$. The sum of these seven cells is $S_i = 0$.
Therefore array $A$ is a diagonal $SMS(n;7)$.
\end{proof}
	
	\begin{figure}[ht]
		$$\begin{array}{|c|c|c|c|c|c|c|c|c|c|}
		\hline
		7 & -20 & -5 & -4 & & & & 18 & -16 & 20 \\ \hline
		-15 & 14 & 8 & 2 & -11 & & & & 11 & -9 \\ \hline
		-2 & 13 & 21 & -27 & 9 & -18 & & & & 4 \\ \hline
		-3 & 5 & -22 & 28 & 1 & 16 & -25 & & & \\ \hline
		& -10 & 12 & 6 & 35 & -34 & 23 & -32 & & \\ \hline
		& & -17 & 19 & -29 & -28 & -6 & 30 & 31 & \\ \hline
		& & & -24 & 26 & -1 & -21 & 29 & -33 & 24 \\ \hline
		17 & & & & -31 & 33 & 34 & -14 & -13 & -26 \\ \hline
		-19 & 10 & & & & 32 & -30 & -8 & -7 & 22 \\ \hline
		15 & -12 & 3 & & & & 25 & -23 & 27 & -35 \\ \hline
		\end{array}$$
		
		\caption{A diagonal $SMS(10;7)$, using the method of Lemma \ref{7in2mod4diag}}
		\label{7in10diag}
	\end{figure}

Now, we can solve the last quarter of the square case.

\begin{thm}
	\label{oddineven}
	Given $n > t > 2$ with $n$ even and $t$ odd, there exists an $SMS(n;t)$. If $t > 3$ or $n$ is a multiple of $4$, this square is also diagonal.
\end{thm}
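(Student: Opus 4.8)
The plan is to prove this the same way Theorems \ref{eveninodd} and \ref{evenineven} were handled: establish a small number of base cases in $t$ and then bootstrap upward in steps of four via Lemma \ref{add4tot}, keeping track of the number of occupied diagonals throughout. Since $t$ is odd and at least $3$ while Lemma \ref{add4tot} raises $t$ by $4$, the odd values of $t$ fall into the residue classes $t\equiv 3\pmod4$ and $t\equiv 1\pmod4$, and I would run one induction on each.

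First I would dispose of the single genuinely non-diagonal case, $t=3$ with $n\equiv 2\pmod4$ (so $n\ge 6$): here Lemma \ref{3ineven} already produces an $SMS(n;3)$, and no diagonal claim is required. For every remaining pair $(n,t)$ I claim there is a \emph{diagonal} $SMS(n;t)$, proved by strong induction on $t$. The base cases are: $t=3$ with $n\equiv 0\pmod4$ (a diagonal $SMS(n;3)$ by Lemma \ref{3in0mod4diag}, valid for all $n=4k\ge 4>3$); $t=5$ (a diagonal $SMS(n;5)$ by Lemma \ref{5in0mod4diag} when $n\equiv 0\pmod4$, so $n\ge 8$, and by Lemma \ref{5in2mod4diag} when $n\equiv 2\pmod4$, so $n\ge 6$, together covering every even $n>5$); and $t=7$ with $n\equiv 2\pmod4$ (a diagonal $SMS(n;7)$ by Lemma \ref{7in2mod4diag}, valid for $n=4k+2\ge 10$). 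The remaining small case $t=7$, $n\equiv 0\pmod4$ (so $n\ge 8$) is obtained by applying Lemma \ref{add4tot} to the diagonal, hence $3$-diagonal, $SMS(n;3)$ of Lemma \ref{3in0mod4diag}; this is legitimate since $3\le n-4$ and $n$ is even, and it yields a $7$-diagonal, i.e.\ diagonal, $SMS(n;7)$.

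For the inductive step, let $t\ge 9$ be odd and $n>t$ be even, and assume the claim for all smaller odd values. Since $t-4\ge 5>3$, the inductive hypothesis applies to $(n,t-4)$ with no parity restriction on $n$ and gives a diagonal, i.e.\ $(t-4)$-diagonal, $SMS(n;t-4)$. Because $t<n$ we have $t-4\le n-4$, so Lemma \ref{add4tot} (with $n$ even) produces a $(t-4)+4=t$-diagonal $SMS(n;t)$, which is diagonal, as wanted. Combining the two residue-class inductions, the base cases, and the non-diagonal $t=3$ case completes the proof, and one checks that the diagonal conclusion obtained in every case is exactly ``$t>3$ or $n\equiv 0\pmod4$,'' matching the statement.

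The main obstacle here is bookkeeping rather than new construction — all the substance lives in the preceding lemmata. The one point needing real care is the family $t\equiv 3\pmod4$, $n\equiv 2\pmod4$: one cannot begin that induction at $t=3$, because Lemma \ref{3in0mod4diag} supplies a diagonal $SMS(n;3)$ only when $n\equiv 0\pmod4$, while the merely-existent array from Lemma \ref{3ineven} occupies too many diagonals to be fed into Lemma \ref{add4tot}. This is precisely why Lemma \ref{7in2mod4diag} is needed as an independent base case at $t=7$. The only other thing to verify at each step is the hypothesis $k\le n-4$ of Lemma \ref{add4tot}, which for a diagonal source array reduces to the given inequality $n>t$.
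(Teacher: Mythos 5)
Your proposal is correct and follows essentially the same route as the paper: dispose of the non-diagonal case $t=3$, $n\equiv 2\pmod 4$ via Lemma \ref{3ineven}, use Lemmata \ref{3in0mod4diag}, \ref{5in0mod4diag}, \ref{5in2mod4diag}, and \ref{7in2mod4diag} as the base cases, and climb by fours with Lemma \ref{add4tot}. Your explicit remark on why the $t\equiv 3\pmod 4$, $n\equiv 2\pmod 4$ induction must start at $t=7$ rather than $t=3$ is a point the paper leaves implicit, but it is the same argument.
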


\begin{proof}
	If $t = 3$ and $n \equiv 2 \pmod 4$, then we apply Lemma \ref{3ineven}.
Otherwise, we will proceed by induction. As our base case, let $t = 3$ with $n$ a multiple of $4$, or $t = 5$ with $n \equiv 0$ or $2 \pmod 4$, or $t = 7$ with $n \equiv 2 \pmod 4$. In these cases, we apply Lemma \ref{3in0mod4diag}, \ref{5in0mod4diag}, \ref{5in2mod4diag}, or \ref{7in2mod4diag}, respectively.
	
	As our inductive case, assume that there exists a diagonal $SMS(n;t-4)$. Lemma \ref{add4tot} then gives us a diagonal $SMS(n;t)$. We conclude the proof via induction on $t$.
\end{proof}

Lastly, we tie all five of our theorems on square arrays together into the following statement.

\begin{thm}
	\label{square}
	There exists an $SMS(n;t)$ for $n \geq t \geq 1$ precisely when $n, t = 1$ or $n, t > 2$.
\end{thm}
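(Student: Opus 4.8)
\section*{Proof proposal}

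The plan is to treat the two directions separately and to observe that, with all the work of the preceding sections in hand, this theorem is essentially an exercise in bookkeeping. For the necessity direction, suppose an $SMS(n;t)$ exists with $n \geq t \geq 1$; I must show that either $n = t = 1$ or $n, t > 2$. (The hypothesis $n \geq t$ is not really a restriction: a row of an $m \times n$ array has only $n$ cells, so it cannot contain $t > n$ filled cells.) Equivalently, I must exclude the cases $t = 1$ with $n > 1$ and $t = 2$ with arbitrary $n$. But this is precisely the content of Theorem \ref{12inn}, which states that the only $SMS(n;t)$ with $t < 3$ is the trivial $SMS(1;1)$. So the necessity direction is immediate.

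For sufficiency, assume $n \geq t \geq 1$ with either $n = t = 1$ or $n, t > 2$. The case $n = t = 1$ is handled directly by the $1 \times 1$ array whose single entry is $0$. So assume $n \geq t > 2$, and split into four cases according to the parities of $n$ and $t$. First, if $n$ and $t$ are both odd, then $n \geq t \geq 3$, and Theorem \ref{oddinodd} gives the array (this case includes $n = t$). Second, if $n$ is odd and $t$ is even, then $t \geq 4$, so in particular $t > 3$, and since $n$ and $t$ have opposite parity we have $n \neq t$, hence $n > t > 3$; Theorem \ref{eveninodd} applies. Third, if $n$ is even and $t$ is odd, then $t \geq 3$ and again $n \neq t$, so $n > t > 2$, and Theorem \ref{oddineven} applies. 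Fourth, if $n$ and $t$ are both even, then $n \geq t \geq 4 > 3$, and Theorem \ref{evenineven} gives the array (this case includes $n = t$, handled there via tightness and Lemma \ref{evenxeven}). These four cases are exhaustive, so the sufficiency direction follows.

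There is no genuine mathematical obstacle here, since every substantive construction is packaged in an earlier result; the only thing requiring care is checking that the case analysis is exhaustive and that the side conditions in the cited theorems are met automatically. The key elementary observations making the side conditions line up are that an even integer exceeding $2$ is at least $4$ (so ``$t > 2$ and $t$ even'' upgrades to ``$t > 3$'' wherever a theorem demands it), and that two integers of opposite parity are unequal (so in the two mixed-parity cases we get the strict inequality $n > t$ for free). I would likely present the case distribution in a small table analogous to Figure \ref{Tight array cases}, keyed to Theorems \ref{oddinodd}, \ref{eveninodd}, \ref{oddineven}, and \ref{evenineven}, to make the exhaustiveness transparent.
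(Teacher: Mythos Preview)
Your proposal is correct and follows essentially the same approach as the paper: a case split on the parities of $n$ and $t$, invoking Theorem~\ref{12inn} for non-existence and Theorems~\ref{oddinodd}, \ref{eveninodd}, \ref{oddineven}, \ref{evenineven} for existence, with the results organized in a table (the paper's Figure~\ref{signed magic square table}). Your extra care in verifying the side conditions of each cited theorem is a nice touch that the paper leaves implicit.
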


\begin{proof}
	To determine whether an $SMS(n;t)$  exists for a given $t$, $n$ with $n \geq t$, one may consult the above table for an answer as well as which theorem to apply to find it.
	\begin{figure}
	\begin{center}
		{\tabulinesep=2mm
			\begin{tabu}{ c | c | c | c | c |}
				& $n = 1$ & $n = 2$ & $n > 2$ odd & $n > 2$ even \\ \hline
				$t = 1$ & Yes, trivially & No, Theorem \ref{12inn} & No, Theorem \ref{12inn} & No, Theorem \ref{12inn} \\ \hline
				$t = 2$ & \cellcolor{gray!25} $(n < t)$ & No, Theorem \ref{12inn} & No, Theorem \ref{12inn} & No, Theorem \ref{12inn} \\ \hline
				$t > 2$ odd & \cellcolor{gray!25} $(n < t)$ & \cellcolor{gray!25} $(n < t)$ & Yes, Theorem \ref{oddinodd} & Yes, Theorem \ref{oddineven} \\ \hline
				$t > 2$ even & \cellcolor{gray!25} $(n < t)$ & \cellcolor{gray!25} $(n < t)$ & Yes, Theorem \ref{eveninodd} & Yes, Theorem \ref{evenineven} \\ \hline
			\end{tabu}
		}
	\end{center}
	\caption{The various cases of signed magic squares and their corresponding lemmata.}
	\label{signed magic square table}
	\end{figure}
\end{proof}

\section{Signed magic rectangles}\label{SMR}

A natural question to ask is whether the results proven above for signed magic squares extend to signed magic rectangles, i.e arrays where the number of elements in each row differs from the number of elements in each column. A particular case that seems natural to consider is an $n \times 2n$ array that contains $t$ entries in every column and $2t$ entries in every row.

\begin{thm} \label{Heffter rectangles} Let $m \geq t \geq 3$ and suppose $mt \equiv 0 \text{ or } 3 \pmod 4$. Then there exists an $SMA(m,2m;2t,t)$.
\end{thm}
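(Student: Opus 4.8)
\emph{Proof proposal.} The plan is to reduce the construction to a known object: a square integer Heffter array. The key observation is that the hypotheses of the theorem, namely $m \geq t \geq 3$ together with $mt \equiv 0$ or $3 \pmod 4$, are precisely the conditions under which Theorem \ref{Heffterwithemptycells} guarantees an integer $H(m;t)$. So the first step is simply to invoke that theorem and fix an integer Heffter array $A = [a_{i,j}]$ of size $m \times m$ having $t$ filled cells in each row and each column, with entries drawn from $\{\pm 1, \pm 2, \ldots, \pm mt\}$ in such a way that for each $v \in [1, mt]$ exactly one of $v$ and $-v$ occurs in $A$, and with every row sum and every column sum equal to zero.

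Next I would build the desired $m \times 2m$ array $B = [b_{i,j}]$ by juxtaposing $A$ with its entrywise negative: set $b_{i,j} = a_{i,j}$ for $1 \leq j \leq m$ and $b_{i,j} = -a_{i,\,j-m}$ for $m < j \leq 2m$, declaring a cell of $B$ empty precisely when the corresponding cell of $A$ is empty. In other words, $B = [\,A \mid -A\,]$.

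It then remains to verify the four defining properties of an $SMA(m,2m;2t,t)$, each of which is immediate. First, every row of $B$ has $t$ filled cells in its left half and $t$ in its right half, hence $2t$ in all, while every column of $B$ coincides with a column of $A$ or of $-A$ and so has $t$ filled cells. Second, since $A$ contains exactly one of $v,-v$ for each $v \in [1, mt]$, the block $-A$ contains the other; thus each element of $\{\pm 1, \ldots, \pm mt\}$ appears exactly once in $B$. Third, the sum of row $i$ of $B$ equals the sum of row $i$ of $A$ plus the sum of row $i$ of $-A$, that is $0 + 0 = 0$. Fourth, each column of $B$ inherits a zero sum directly from $A$ (respectively $-A$). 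Hence $B$ is the array sought.

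I expect essentially no obstacle here beyond recognizing that the arithmetic conditions on $(m,t)$ coincide exactly with the Heffter-array existence conditions; the ``$\mid\, -A$'' device does all the work, and no computation is needed. The one situation this argument cannot reach is $m \equiv t \equiv 2 \pmod 4$ (for which no integer $H(m;t)$ exists), and that case must be treated separately, as indicated in the introduction to this section.
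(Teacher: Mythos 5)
Your construction is exactly the paper's: invoke Theorem \ref{Heffterwithemptycells} to obtain an integer $H(m;t)$ (possible precisely because the hypotheses $m\ge t\ge 3$ and $mt\equiv 0,3\pmod 4$ match that theorem), and juxtapose it with its entrywise negative to form $B=[\,A\mid -A\,]$; the verification is the same, so the proof is correct. One small aside: your closing remark misidentifies the uncovered case, since $m\equiv t\equiv 2\pmod 4$ gives $mt\equiv 0\pmod 4$ and is in fact reached by this argument--the cases genuinely outside this theorem are those with $mt\equiv 1$ or $2\pmod 4$.
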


\begin{proof}
Note that by Theorem \ref{Heffterwithemptycells} there exists an integer $m \times m$ Heffter array $A=[a_{i,j}]$ with $t$ entries filled in each row and column. Let $A'=[a_{i,j}']$ be the integer Heffter array, where $a'_{i,j}=-a_{i,j}$ if the cell $(i,j)$ in $A$ is filled, otherwise the cell $(i,j)$ is left blank. Now let $B=[b_{i,j}]$ be the $m \times 2m$ array defined by $b_{i,j}= a_{i,j}$ if $j \leq m$ and $b_{i,j}=a'_{i,j-m}$ if $m<j\leq 2m$. If the cell $(i,j)$ in $A$ is empty the cells $(i,j),(i,m+j)$ in $B$ are also empty (see Figure \ref{Heffter rectangles}).
It is easy to see that $B$ is an $SMA(m,2m;2t,t)$.
\end{proof}

\begin{figure}[ht]
$$\begin{array}{|c|c|c|c|c|c|c|c|}
\hline
4 & 8 & & -12 &-4 & -8 & & 12\\
\hline
-9 & 3 & 6  & & 9 & -3 & -6 &\\
\hline
 & -11 & 1 & 10 & & 11 & -1 & -10\\
\hline
5 & & -7 & 2 & -5 & & 7 & -2\\
\hline
\end{array}$$

\caption{An $SMA(4,8;6,3)$, using the method of Theorem \ref{Heffter rectangles}.}
\label{6 by 3 in 4 by 8}
\end{figure}

\begin{lem}\label{shiftable-t-in-m}
There exists a shiftable $SMS(m;t)$ if and only if $t$ is even and $m\geq t \geq 4.$
\end{lem}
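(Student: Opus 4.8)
The plan is to settle necessity by a parity count, and sufficiency by superposing shifted copies of small shiftable building blocks on pairwise disjoint sets of consecutive (broken) diagonals.

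\textbf{Necessity.} In a shiftable array every row contains equally many positive as negative entries, so the common number $t$ of filled cells per row is even; and $m\ge t$ by the definition of an $SMS(m;t)$. Moreover, Theorem \ref{12inn} rules out every $SMS(m;t)$ with $t<3$ except the trivial $SMS(1;1)$, which has $t=1$; hence $t\ge 3$, and combined with evenness $t\ge 4$.

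\textbf{Sufficiency, granted a good $SMS(m;6)$.} Fix $t$ even with $4\le t\le m$. If $t=m$, then $m$ is even and greater than $2$, so Lemma \ref{evenxeven} provides a shiftable tight $SMA(m,m)$, i.e.\ a shiftable $SMS(m;m)$. So assume $t<m$, and suppose that for the relevant $m$ we have a shiftable $SMS(m;6)$ contained in at most seven consecutive diagonals. If $t\equiv 0\pmod 4$, take $t/4$ cyclic column-rotations of the shiftable diagonal $SMS(m;4)$ of Lemma \ref{4inndiag}, placed on the $t/4$ disjoint blocks of four consecutive diagonals $\{0,\dots,3\},\{4,\dots,7\},\dots$ (which fit since $t\le m$), the $i$-th of them shifted to use absolute values $2m(i-1)+1,\dots,2mi$. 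Their superposition has exactly $t$ filled, value-balanced cells summing to zero in every row and column, and uses $\pm1,\dots,\pm mt/2$, so it is a shiftable $SMS(m;t)$. If $t\equiv 2\pmod 4$ (so $t\ge 6$), superpose instead one shiftable $SMS(m;6)$ (on its $\le 7$ diagonals, using $\pm1,\dots,\pm 3m$) with $(t-6)/4$ cyclic rotations of the shiftable diagonal $SMS(m;4)$ (using the next absolute values, up to $\pm mt/2$), all on pairwise disjoint blocks of consecutive diagonals; since $t<m$ the number of diagonals used is at most $7+(t-6)=t+1\le m$, so this placement is possible, and the superposition is again a shiftable $SMS(m;t)$. (A cyclic column-rotation of a diagonal shiftable array is still a diagonal shiftable array, with its diagonals cyclically shifted, which is why the blocks can be placed where we need them.)

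\textbf{The good $SMS(m;6)$, and the main obstacle.} It remains to supply, for each $m>6$, a shiftable $SMS(m;6)$ in at most seven consecutive diagonals. For odd $m$ this is Lemma \ref{6inodddiag} (a shiftable diagonal, hence $6$-diagonal, square), and for $m\equiv 2\pmod 4$ with $m\ge 10$ it is Lemma \ref{6in2mod4diag}; these are exactly the cases of $m$ arising in the assembly when $t\equiv 2\pmod 4$ and $4\nmid m$, since there $m>t\ge 6$. The remaining case $m\equiv 0\pmod 4$, $m\ge 8$, is the crux and has no ready-made predecessor: the recipe of Lemma \ref{6inodddiag} cannot be reused, because for even $m$ there is no $3\times m$ array with all column sums equal and entries $1,\dots,3m$ (the forced common column sum $\frac{3(3m+1)}{2}$ is not an integer), and the recipe of Lemma \ref{6in2mod4diag} cannot be reused, because it relies on $\frac{m-2}{2}$ being even. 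I would obtain the missing construction by adapting the proof of Lemma \ref{6in2mod4diag}: the partition of $[1,3m]$ into triples of equal sum still exists for even $m$; when the two ``classes'' of leftover triples each have odd cardinality, route two distinguished triples into $B_1$ and two more into $B_2$, so that columns $1,2,3,4$ of the resulting array fail to have sum zero only by $\pm1$ in a controlled pattern, and then enlarge the small local rearrangement of that proof from two columns to four, repairing all four sums while keeping the array shiftable and within seven diagonals. (Alternatively, one can build a tight integer $\frac{m}{2}\times 6$ Heffter array having three positive and three negative entries in each row and feed it through the orthogonal-partition construction of Lemma \ref{tnmult8}, whose column partition already pairs every entry with its negative; this gives a shiftable $SMS(m;6)$, though not a diagonal one, and would then require handling $m\equiv 0\pmod 4$, $t\equiv 2\pmod 4$ by a block-diagonal decomposition instead of by adding diagonal layers.) Producing this $m\equiv 0\pmod 4$ construction and checking its diagonal extent is the step I expect to demand the most work; everything else is bookkeeping with the shiftable blocks already in hand.
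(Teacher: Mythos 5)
Your proof follows essentially the same route as the paper's: necessity via the parity of the number of filled cells per row, the tight case $t=m$ from Lemma \ref{evenxeven}, and for $t<m$ an assembly of shifted, column-rotated copies of the shiftable diagonal $SMS(m;4)$ of Lemma \ref{4inndiag} on disjoint bands of diagonals, seeded when $t\equiv 2\pmod 4$ by a shiftable $SMS(m;6)$ occupying few diagonals. The paper phrases this as an induction adding four diagonals at a time rather than a single superposition, but it is the same construction, and your diagonal-count bookkeeping ($t\le m$, resp.\ $7+(t-6)=t+1\le m$) is correct.

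The one step you leave unfinished is a genuine gap: for $m\equiv 0\pmod 4$ and $t\equiv 2\pmod 4$ (the first instance being a shiftable $SMS(8;6)$) you only sketch two possible repairs without carrying either out, so as written the sufficiency direction is unproved in that quarter of the cases. You should know, however, that the paper's own proof has exactly the same hole: it cites Lemma \ref{6in2mod4diag} as the base case for ``$t\equiv 2\pmod 4$ and $m$ even,'' but that lemma only treats $m\equiv 2\pmod 4$ with $m\ge 10$, and no shiftable few-diagonal $SMS(m;6)$ is constructed anywhere in the paper for $4\mid m$. Your diagnosis of why neither existing construction extends is also correct: the recipe of Lemma \ref{6inodddiag} needs a $3\times m$ block with equal column sums, which forces the non-integer value $\tfrac{3(3m+1)}{2}$ when $m$ is even, and the recipe of Lemma \ref{6in2mod4diag} needs $\tfrac{m-2}{2}$ to be even in order to pair off the two classes of triples. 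So one of your proposed fixes, worked out in full, would be a genuine addition rather than a reproduction of something already in the paper; until it is supplied, both your argument and the paper's are incomplete for $m\equiv 0\pmod 4$, $t\equiv 2\pmod 4$, and consequently so is the proof of Theorem \ref{shiftable rectangles} in that case.
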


\begin{proof} Suppose $t$ is odd and that there exists an $SMS(m;t)$. Then each row and column of the array contains $t$ filled cells. Since $t$ is odd, there cannot be an equal number of positive and negative entries in each row and column, and the array is not shiftable. If $t=2,$ then clearly there does not exist an $SMS(m;t).$

Now suppose $t \geq 4$ is even. First we consider the case $t=m$ and proceed with induction on $t$.
For the base case, note that both the $4 \times 4 $ and $6 \times 6$ arrays used in the construction of Lemma \ref{evenxeven} are shiftable. So suppose that there exists a shiftable $SMS(t-4;t-4).$ Then we can add four columns to this array by attaching a series of shiftable $2 \times 4$ arrays to the original array. The resulting array is shiftable because $4$ entries, $2$ negative and $2$ positive, are added to each row, and each integer is paired with its opposite in the added columns. Next, we can add four rows to this array by attaching a series of shiftable $4 \times 2$ arrays to the $t-4 \times t $ array. It is easy to see that the resulting array is shiftable.
Hence, by induction, there exists a shiftable $SMS(t;t).$

Now we consider the case $t<m.$ Let $t \equiv 0 \pmod 4$. We again proceed by induction on $t.$ By Lemma \ref{4inndiag}, there exists a shiftable diagonal $SMS(m;4)$ for all $m\geq 4.$ Now let $4 \leq t \leq m-5$ and suppose there exists a shiftable $SMS(m;t)$. We can fill four additional adjacent diagonals using the original $4$-diagonal array shifted appropriately and permuting the columns as necessary. This gives an $SMS(n;t+4),$ and this array is shiftable because we have added $2$ positive and $2$ negative entries to each row and column. By induction, the claim holds when $t \equiv 0 \pmod 4$.

The proof is essentially identical in structure in the cases when $t \equiv 2 \pmod 4$ and $m$ is even, and when $t \equiv 2 \pmod 4$ and $m$ is odd. Note that the base cases are given by Lemma \ref{6in2mod4diag} and Lemma \ref{6inodddiag}, respectively. This completes the proof.
\end{proof}

\begin{figure}[ht]
$$\begin{array}{|c|c|c|c|c|c|c|c|c|c|c|c|c|c|}
\hline
1 & -1 & & 4 &-4 & 6 & -6 & 22 & -22 & & 25 & -25 & 27 & -27\\
\hline
-7 & 20 & -20  & & 12 & -12 & 7 & -28 & 41 & -41 & & 33 & -33 & 28\\
\hline
 19 & -19 & 9 & -9 & & 17 & -17 & 40 & -40 & 30 & -30 & & 38 & -38\\
\hline
-8 & 11 & -11 & 14 & -14 & & 8 & -29 & 32 & -32 & 35 & -35 & & 29\\
\hline
13 & -13 & 3 & -3 & 5 & -5 & & 34 & -34 & 24 & -24 & 26 & -26 &\\
\hline
& 2 & -2 & 15 & -15 & 10 & -10 & & 23 & -23 & 36 & -36 & 31 &-31\\
\hline
-18 & & 21 & -21 & 16 & -16 & 18 & -39 & & 42 & -42 & 37 &-37 & 39\\
\hline
\end{array}$$

\caption{An $SMA(7,14;12,6)$ obtained using the method of Theorem \ref{Heffter rectangles}.}
		\label{12 by 6 in 7 by 14}
	\end{figure}

\begin{thm} \label{shiftable rectangles} Let $m \geq t \geq 3$ with $t$ even. Then there exists an $SMA(m,2m;2t,t)$.
\end{thm}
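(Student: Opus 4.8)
The plan is to reduce this to the existence of a shiftable signed magic square, which is exactly what Lemma \ref{shiftable-t-in-m} supplies. Since $t \geq 3$ and $t$ is even, we in fact have $t \geq 4$, so Lemma \ref{shiftable-t-in-m} gives a shiftable $SMS(m;t)$, say $C=[c_{i,j}]$; its nonempty entries are precisely the numbers $\pm 1,\pm 2,\ldots,\pm\frac{mt}{2}$, each occurring once.

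First I would produce a second, disjoint copy of this square. Because $C$ is shiftable, increasing the absolute value of every entry by $\frac{mt}{2}$ (that is, adding $\frac{mt}{2}$ to each positive entry and $-\frac{mt}{2}$ to each negative entry) leaves every row sum and column sum equal to zero, by the shifting argument recalled in Section \ref{introduction}. Call the resulting array $C'=[c'_{i,j}]$; its entries are precisely $\pm(\frac{mt}{2}+1),\ldots,\pm mt$, so $C$ and $C'$ use disjoint value sets whose union is $\{\pm1,\pm2,\ldots,\pm mt\}$. Next I would assemble the rectangle by placing $C$ in the left half and $C'$ in the right half: define the $m\times 2m$ array $B=[b_{i,j}]$ by $b_{i,j}=c_{i,j}$ for $1\leq j\leq m$ and $b_{i,j}=c'_{i,j-m}$ for $m<j\leq 2m$, with a cell of $B$ empty exactly when the corresponding cell of $C$ (resp. $C'$) is empty. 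Since the two halves occupy disjoint columns, no overlap occurs.

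It then remains only to verify the defining properties, which is routine: each row of $B$ has $t+t=2t$ filled cells and each column has $t$ filled cells; every number in $\{\pm1,\ldots,\pm mt\}=\{\pm1,\ldots,\pm\frac{m(2t)}{2}\}$ appears exactly once; each row sum is $0+0=0$ because both halves contribute zero row sums; and each column sum equals the corresponding column sum of $C$ or of $C'$, hence is $0$. Thus $B$ is an $SMA(m,2m;2t,t)$. There is no substantial obstacle here: the only points requiring care are that the hypothesis $t\geq 3$ with $t$ even forces $t\geq 4$, which is precisely the range in which Lemma \ref{shiftable-t-in-m} guarantees a shiftable $SMS(m;t)$, and that the shift in the second step is legitimate exactly because that square is shiftable. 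Combined with Theorem \ref{Heffter rectangles}, this establishes the existence of $SMA(m,2m;2t,t)$ whenever $mt\equiv 0,3\pmod 4$ or $t$ is even.
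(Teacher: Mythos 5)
Your proposal is correct and is essentially identical to the paper's own proof: both apply Lemma \ref{shiftable-t-in-m} to obtain a shiftable $SMS(m;t)$, shift a second copy by $\frac{mt}{2}$ in absolute value, and juxtapose the two squares side by side to form the $m\times 2m$ rectangle. Your explicit remark that $t\geq 3$ even forces $t\geq 4$, matching the hypothesis of Lemma \ref{shiftable-t-in-m}, is a small but welcome clarification.
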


\begin{proof} By Lemma \ref{shiftable-t-in-m},  there exists a shiftable $SMS(m;t)$, say $A=[a_{i,j}]$.
Now, let $A'=[a_{i,j}']$ be the array defined by $a'_{i,j}=a_{i,j}+\frac{mt}{2}$ if $a_{i,j}>0$,  $a'_{i,j}=a_{i,j}-\frac{mt}{2}$ if $a_{i,j}<0$, and the cell $(i,j)$ is left blank if and only if the corresponding cell in $A$ is left blank. Note that because $A$ is shiftable, $A'$ also has the zero-sum property in its rows and columns.

Now let $B$ be the $m \times 2m$ array where $b_{i,j}= a_{i,j}$ if $j \leq m$ and $b_{i,j}=a'_{i,j-m}$ if $m<j\leq 2m$. If the cell $(i,j)$ is empty in $A$, the cells $(i,j)$ and $(i,j+m)$ are left empty in $B$
(see Figure \ref{12 by 6 in 7 by 14}
It is easy to see that $B$ is the desired array.
\end{proof}

\begin{figure}
{\small
\begin{center}
		{\tabulinesep=2mm
			\begin{tabu}{ c | c | c | c | c |}
				& $m \equiv 0 \pmod 4$ & $m \equiv 1 \pmod 4$ & $m \equiv 2 \pmod 4$ & $m \equiv 3 \pmod 4$ \\ \hline
				$t \equiv 0 \pmod 4$ & Yes, Theorem \ref{Heffter rectangles} & Yes, Theorem \ref{Heffter rectangles} & Yes, Theorem \ref{Heffter rectangles} & Yes, Theorem \ref{Heffter rectangles} \\ \hline
				$t \equiv 1 \pmod 4$ & Yes, Theorem \ref{Heffter rectangles}  & ? & ? & Yes, Theorem \ref{Heffter rectangles} \\ \hline
				$t \equiv 2 \pmod 4$  & Yes, Theorem \ref{Heffter rectangles} & Yes, Theorem \ref{shiftable rectangles}  & Yes, Theorem \ref{Heffter rectangles} & Yes, Theorem \ref{shiftable rectangles} \\ \hline
				$t \equiv 3 \pmod 4$ & Yes, Theorem \ref{Heffter rectangles} & Yes, Theorem \ref{Heffter rectangles} & ? & ? \\ \hline
			\end{tabu}
		}
	\end{center}
\caption{The existence of some $m \times 2m$ signed magic rectangles. }
\label{existence of signed magic rectangles}
}
\end{figure}

These two theorems actually cover many of the cases for $m \times 2m$ signed magic rectangles.
Figure \ref{existence of signed magic rectangles} summarizes our results on signed magic rectangles of dimensions $m \times 2m$ for $m \geq t \geq 3$.

\end{document}